\numberwithin{equation}{section}
\theoremstyle{plain}
\newtheorem*{theorem*}{Theorem}
\newtheorem*{lemma*}{Lemma}
\newtheorem{theorem}{Theorem}
\newtheorem{lemma}{Lemma}[section]
\newtheorem{corollary}[lemma]{Corollary}
\newtheorem{question}[lemma]{Question}
\newtheorem{proposition}[lemma]{Proposition}
\theoremstyle{definition}
\newtheorem{definition}[lemma]{Definition}
\newtheorem{remark}[lemma]{Remark}
\newcommand{\tet}{\theta}
\newcommand{\be}{\begin{equation}}
\newcommand{\ee}{\end{equation}}
\newcommand{\ba}{\begin{array}{l}}
\newcommand{\ea}{\end{array}}
\newcommand{\Rr}{{\mathbb R}}
\newcommand{\R}{{\mathcal{R}}}
\newcommand{\eps}{{\varepsilon}}
\newcommand{\Up}{\mathcal{U}^\phi}
\renewcommand{\div}{{\mbox{div}\,}}
\def\pr{\partial}
\begin{document}

\title{On the Stability of Self-similar Blow-up for $C^{1,\alpha}$ Solutions to the Incompressible Euler Equations on $\Rr^3$}
\author{Tarek M. Elgindi\footnote{Department of Mathematics, UC San Diego. E-mail: telgindi@ucsd.edu.},~~Tej-Eddine Ghoul\footnote{Department of Mathematics, NYU Abu-Dhabi. E-mail: teg3@nyu.edu.}, and Nader Masmoudi\footnote{Department of Mathematics, NYU Abu-Dhabi and Courant Institute of Mathematical Sciences. E-mail: nm30@nyu.edu.}}
\date{}
\maketitle

\begin{abstract}
We study the stability of recently constructed self-similar blow-up solutions to the incompressible Euler equation. A consequence of our work is the existence of finite-energy $C^{1,\alpha}$ solutions that become singular in finite time in a locally self-similar manner. As a corollary, we also observe that the Beale-Kato-Majda criterion \emph{cannot} be improved in the class of $C^{1,\alpha}$ solutions. 
\end{abstract}

\tableofcontents

\section{Introduction}

\subsection{The Euler equation}

Recall the incompressible Euler equation governing the motion of an ideal fluid on $\mathbb{R}^3$:
\begin{equation}\label{E}\partial_t u+u\cdot\nabla u+\nabla p=0, \end{equation} 
\begin{equation}\label{incompressibility} \div(u)=0, \end{equation}
\begin{equation}\label{IC} u|_{t=0}=u_0. \end{equation}
$u:\mathbb{R}^3\times [0,\infty) \rightarrow\mathbb{R}^3$ is the velocity field of the fluid. $p$ is the force of internal pressure which acts to enforce the incompressibility constraint \eqref{incompressibility}. The incompressibility constraint \eqref{incompressibility} ensures that no patch of fluid can be compressed into a region of smaller volume. The incompressibility constraint has lead many to believe that loss of regularity for classical solutions is unlikely to occur, since concentration is not allowed. In fact, a quantitative consequence of the incompressibility constraint is that localized solutions which are $C^{1}$ in space and time on $\mathbb{R}^3\times [0,T)$ conserve their energy: \begin{equation}\label{EnergyBalance}\frac{d}{dt}\int_{\mathbb{R}^3} |u(x,t)|^2dx=0\end{equation} for all $t\in [0,T)$. Unfortunately, the conservation of total kinetic energy in the fluid does not seem to be enough to deduce that solutions to \eqref{E}-\eqref{IC} retain their regularity for all time as it does not preclude a blow-up of the gradient of the velocity field. This is indeed what happens in the Burgers equation in any dimension (which is \eqref{E} with $p\equiv 0$ and without the constraint \eqref{incompressibility}). On the other hand, the incompressibility constraint does prevent blow-up in two dimensions.  This is due to presence of higher order conservation laws, which will be discussed in the coming section. In the class of localized $C^\infty$ solutions, it remains a major open problem whether finite-time blow-up can happen on $\mathbb{R}^3$. In this work we are concerned with finite-energy $C^{1,\alpha}$ solutions to \eqref{E}-\eqref{IC}. Recently, self-similar solutions to \eqref{E}-\eqref{IC} were constructed in \cite{E_Classical}. This was done by showing that, in certain scenarios, the Euler equation \eqref{E}-\eqref{IC} can be viewed as a perturbation of a model equation with \emph{stable} self-similar blow-up. 

\subsection{The vorticity equation}
An important quantity to consider when studying ideal fluids is the vorticity vector field \[\omega:= \nabla\times u.\] It satisfies the vorticity equation:
\begin{equation}\label{Vorticity} \partial_t\omega+(u\cdot\nabla)\omega=(\omega\cdot \nabla) u.\end{equation}
Since $\div(u)=0$ we have that $\nabla \times (\nabla \times u)=-\Delta u.$ Thus, $u$ can be recovered from $\omega$ by the so-called Biot-Savart law:
\begin{equation}\label{BSLaw} u=(-\Delta)^{-1}(\nabla\times \omega).\end{equation} For classical solutions (with $u\in C^{1,\alpha}$ or, equivalently, $\omega\in C^\alpha$ for some $\alpha>0$), solving \eqref{E}-\eqref{incompressibility} is equivalent to solving \eqref{Vorticity}-\eqref{BSLaw} (so long as the vorticity is taken to be initially divergence-free when solving \eqref{Vorticity}-\eqref{BSLaw}). It is important to remark that when the velocity depends only on two coordinates, it is easy to show that solutions are globally regular and that the vorticity is finite for all finite time. This means that a singularity must come from a genuinely three-dimensional solution.

\subsection{Statement of the Main Theorem}
We now move to discuss the main result of this paper. 
\begin{theorem}\label{MainTheorem}
There is a continuum of $\alpha>0$ for which there exists a divergence-free $u_0\in C^{1,\alpha}(\mathbb{R}^3)$ with compactly supported initial vorticity $\omega_0\in C^\alpha(\mathbb{R}^3)$ so that the unique local solution to \eqref{E}-\eqref{IC} belonging to the class $L^2\cap C^{1,\alpha}_{x,t}([0,1)\times\mathbb{R}^3)$ satisfies \[\lim_{t\rightarrow 1}\int_0^t|\omega(s)|_{L^\infty}ds=+\infty.\] Moreover, the blow-up is stable in a sense that is specified in Theorem \ref{StabilityTheorem}.
\end{theorem}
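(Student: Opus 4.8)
The plan is to reduce Theorem~\ref{MainTheorem} to the stability statement (Theorem~\ref{StabilityTheorem}) together with the construction of self-similar blow-up in \cite{E_Classical}, and then to fill in the two soft analytic points: (i) promoting the self-similar solution to a genuine finite-energy $C^{1,\alpha}$ solution on $\Rr^3$, and (ii) extracting the logarithmic divergence of $\int_0^t|\omega(s)|_{L^\infty}\,ds$ from self-similar scaling. First I would recall the self-similar ansatz: in suitable coordinates (axisymmetric without swirl, with the vorticity written in the form $\omega^\theta/r$ depending on a polar angle and a logarithmic radial variable) the profile $\Omega$ solves the stationary problem obtained from \eqref{Vorticity}--\eqref{BSLaw}, and the corresponding solution of the Euler equation has the form $u(x,t)=(1-t)^{\frac{1}{\alpha}-1}U\!\left(\frac{x}{(1-t)^{1/\alpha}}\right)$ up to lower-order corrections, blowing up at $t=1$. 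The exponent $\alpha$ ranges over a continuum because the construction in \cite{E_Classical} works for all sufficiently small $\alpha>0$ (indeed an interval), which is where the phrase "a continuum of $\alpha>0$" comes from.

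Next I would address finite energy: the bare self-similar profile is only locally in $L^2$ near the origin but need not decay at spatial infinity, so I would cut it off smoothly outside a fixed ball, $u_0 = \chi\,U$ with $\chi\in C_c^\infty$, $\chi\equiv 1$ near the origin. This keeps $u_0\in C^{1,\alpha}(\Rr^3)\cap L^2$ with $\omega_0\in C^\alpha$ compactly supported, and divergence-freeness is restored by the standard Leray projection / correcting the stream function, which costs nothing in regularity. By finite speed of propagation of the nonlinearity's influence on the profile near the blow-up point (more precisely, by the locality of the blow-up and a continuity/persistence-of-regularity argument from \cite{E_Classical}), the cutoff solution still develops the locally self-similar singularity at the origin at $t=1$; this is exactly the content that Theorem~\ref{StabilityTheorem} is designed to deliver, since the cutoff is a (small, or at least admissible) perturbation of the exact self-similar data in the relevant norm. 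Local existence and uniqueness in $L^2\cap C^{1,\alpha}_{x,t}$ is the classical well-posedness theory for $C^{1,\alpha}$ Euler (the Biot–Savart operator \eqref{BSLaw} maps $C^\alpha$ to $C^{1,\alpha}$), so the solution referred to in the statement is well-defined up to the blow-up time.

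For the BKM divergence, I would use the scaling structure directly: along the self-similar solution $|\omega(t)|_{L^\infty}\sim c\,(1-t)^{-1}$ as $t\to 1$ (the vorticity profile $\Omega$ is bounded and nonzero, and the time factor is exactly $(1-t)^{-1}$ regardless of $\alpha$, since vorticity scales like $\nabla u$), so $\int_0^t|\omega(s)|_{L^\infty}\,ds\gtrsim \int_0^t \frac{c\,ds}{1-s}=c\,\log\frac{1}{1-t}\to+\infty$. The stability theorem guarantees that the perturbed (cutoff, finite-energy) solution retains a lower bound of the same form $|\omega(t)|_{L^\infty}\gtrsim (1-t)^{-1}$ near $t=1$, which is all that is needed; if the stability is formulated in self-similar variables this is immediate, since convergence of the rescaled solution to the profile forces the sup-norm lower bound. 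Finally, "stable in a sense specified in Theorem~\ref{StabilityTheorem}" is then literally the assertion that an open set of $C^{1,\alpha}$ perturbations of this data (in the appropriate weighted topology) produces the same locally self-similar blow-up, and one takes $u_0$ to be any representative in that open set, e.g.\ the cutoff profile itself.

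The main obstacle is step (i)–(ii): ensuring that the cutoff, and more generally a finite-energy perturbation, does not destroy the delicate mechanism driving the self-similar blow-up. The self-similar profile in \cite{E_Classical} is only barely $C^{1,\alpha}$ — its construction exploits a near-resonance that is sensitive to the structure of the Biot–Savart kernel — so one must verify that the far-field modification enters only through a remainder that is controlled by the (contracting) linearized dynamics in self-similar variables. Concretely, the hard analytic work lies entirely inside Theorem~\ref{StabilityTheorem}: setting up the self-similar coordinates, writing the equation for the perturbation $v = u_{\mathrm{s.s.}} - u_{\mathrm{profile}}$, showing the linearized operator around the profile has spectrum in a left half-plane on the relevant (finite-codimension, after modding out symmetries) subspace, and closing a nonlinear estimate with a norm that simultaneously controls $C^{1,\alpha}$ regularity and the spatial decay required for finite energy. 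Everything else — well-posedness, the cutoff, and the logarithmic lower bound — is routine once that is in hand.
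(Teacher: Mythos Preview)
Your outline matches the paper's strategy: reduce to Theorem~\ref{StabilityTheorem}, truncate the profile to get compactly supported vorticity, and read off the BKM divergence from the self-similar rate $|\omega(t)|_{L^\infty}\sim (1-t)^{-1}$. Two technical points deserve correction. First, the paper truncates at the \emph{vorticity} level in the self-similar variable, setting $\eps_0=(\chi(z/M)-1)F+\text{correction}$; there is no need to cut off $u$ and Leray project, since Biot--Savart applied to a compactly supported $\omega_0$ already yields a divergence-free $u_0\in L^2$. Second, and more importantly, Theorem~\ref{StabilityTheorem} is not stated for arbitrary small perturbations: it requires the orthogonality condition $L_{12}(\eps_0)(0)=0$, which encodes the modulation constraint needed for coercivity of the linearized operator. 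A bare cutoff does not satisfy this; the paper adds a one-parameter bump $\beta\sin(2\theta)\chi((z-3)^2)$ and tunes $\beta$ (of size $O(M^{-1}+\alpha^2)$) to enforce it, while keeping $\|\eps_0\|_{\mathcal{H}^k}\lesssim M^{-1/4}+\alpha^2$. This is the step hidden behind your phrase ``small, or at least admissible,'' and it is not automatic.
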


\begin{remark}
The proof proceeds by showing that the self-similar solution constructed in $\cite{E_Classical}$ is stable with respect to perturbations in a space that allows for the full solution to be compactly supported. In fact, the perturbations are allowed to have non-trivial swirl.
\end{remark}

\begin{remark}
If we take $\alpha$ smaller and smaller, the blow-up becomes more and more mild. In particular, a consequence of our result is the following corollary.
\end{remark}

\begin{corollary}
In the class of all $L^2\cap C^{1,\alpha}$ solutions to the 3D Euler equations, it is not possible to strengthen the Beale-Kato-Majda criterion in the scale of $L^p$ spaces. In particular, for every $p<\infty$, there exists a classical solution to the 3D Euler equation for which 
\[\sup_{t\in[0,T_*)} \|\omega\|_{L^p}<\infty \qquad \text{while} \qquad \lim_{t\rightarrow T_*} \int_0^t \|\omega\|_{L^\infty}=+\infty. \]
\end{corollary}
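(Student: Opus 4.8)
The plan is to derive the corollary from Theorem~\ref{MainTheorem} (equivalently, from the stability statement in Theorem~\ref{StabilityTheorem}) by exploiting the freedom in the H\"older exponent $\alpha$. Fix $p\in[1,\infty)$. Theorem~\ref{MainTheorem} already produces, for each $\alpha$ in a continuum accumulating at $0$, a divergence-free $u_0\in C^{1,\alpha}(\mathbb{R}^3)$ with compactly supported $\omega_0\in C^\alpha$ whose unique solution in $L^2\cap C^{1,\alpha}_{x,t}([0,1)\times\mathbb{R}^3)$ satisfies $\lim_{t\to1}\int_0^t\|\omega(s)\|_{L^\infty}\,ds=+\infty$; taking $T_*=1$, this is precisely the second half of the corollary. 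So it suffices to show that, for $\alpha$ in that continuum chosen small enough depending on $p$, the \emph{same} solution obeys $\sup_{t\in[0,1)}\|\omega(t)\|_{L^p}<\infty$.

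For this I would use the structure of the solution furnished by Theorem~\ref{StabilityTheorem}: on $[0,1)$ the vorticity splits, up to an error controlled by the stability estimates, into a self-similar core that concentrates at the blow-up point and a uniformly bounded, uniformly compactly supported outer part. The outer part contributes $O(1)$ to $\|\omega(t)\|_{L^q}$ for every $q<\infty$. For the core one appeals to the self-similar change of variables of \cite{E_Classical}: the amplitude of the vorticity grows while the profile concentrates, in a balanced way, and in the rescaled spatial variable it is a fixed profile; tracking how the rescaling exponents depend on $\alpha$ shows that the core stays bounded in $L^q$ uniformly on $[0,1)$ for all $q$ below a threshold $q_0(\alpha)$ with $q_0(\alpha)\to+\infty$ as $\alpha\to0^+$ --- this is the quantitative content of the statement that smaller $\alpha$ gives a milder blow-up. (For the swirl-free part of the flow a piece of this is transparent, since $\omega^\theta/r$ is transported by a volume-preserving flow and hence $\int_{\mathbb{R}^3}|\omega^\theta/r|^q\,dx$ is exactly conserved, while the weight $r^q$ in $\omega^\theta=r\,(\omega^\theta/r)$ suppresses exactly the region near the axis where $\omega^\theta/r$ is concentrated; but the admissible perturbations here carry nontrivial swirl, so the full stability analysis is needed.) Estimating $\|\omega(t)\|_{L^p}^p$ separately on the core and its complement then gives $\sup_{t\in[0,1)}\|\omega(t)\|_{L^p}<\infty$ whenever $p<q_0(\alpha)$.

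The corollary follows: given $p$, pick $\alpha$ in the continuum of Theorem~\ref{MainTheorem} with $q_0(\alpha)>p$ and take the associated solution on $[0,T_*)=[0,1)$. The main obstacle is the step just described --- making the core/outer decomposition quantitative \emph{uniformly up to the singular time}, so that the stability error does not overwhelm the $L^p$ bound as $t\to1$, and controlling the transition region where the self-similar core is glued to the compactly supported tail. Both are governed by the estimates established en route to Theorem~\ref{StabilityTheorem}, so in the end the corollary should amount to extracting the right time-uniform, $\alpha$-dependent norm from that analysis rather than proving anything genuinely new.
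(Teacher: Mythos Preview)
Your proposal is correct and matches the paper's intended argument. The paper does not give a detailed proof of the corollary; it simply precedes the statement with the remark that ``if we take $\alpha$ smaller and smaller, the blow-up becomes more and more mild,'' leaving the reader to supply exactly the scaling computation you outline. Your plan --- take the compactly supported blow-up solution from Theorem~\ref{MainTheorem}, split into a self-similar core and an outer piece, and observe that the $L^p$ threshold $q_0(\alpha)$ tends to $+\infty$ as $\alpha\to 0^+$ --- is precisely how that remark is made quantitative. Concretely, with $\lambda\sim(1-t)$ and the self-similar variable $y=\mu\rho^\alpha/\lambda^{1+\delta}$, one finds $\|\omega(t)\|_{L^p}^p\sim\lambda^{-p+3(1+\delta)/\alpha}$ times a profile integral, so the threshold is $q_0(\alpha)=3(1+\delta)/\alpha$; your identification of this mechanism is on target. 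The side observation about conservation of $\int|\omega^\theta/r|^q\,dx$ in the swirl-free case is a nice alternative handle, though (as you note) not needed once the scaling is tracked directly.
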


\subsection{Discussion of the Result and its Proof}

Our work here proceeds from the point of view of asymptotic stability of stationary solutions in basic dynamical systems. In \cite{E_Classical}, a purely self-similar blow-up profile for the 3D Euler equation was constructed. That is, the vorticity satisfies:
\[\bar\omega(x,t)=\frac{1}{1-t}\bar G(\frac{x}{(1-t)^{\gamma_{*}}}),\] for some constant $\gamma_*$. In particular, $\bar G$ satisfies the static equation:
\[\bar G+\gamma_*z\cdot\nabla \bar G+u_{\bar G}\cdot\nabla \bar G=\bar G\cdot \nabla u_{\bar G},\] where 
\[u_{\bar G}=(-\Delta)^{-1}\nabla\times \bar G\] and $z=\frac{x}{(1-t)^{\gamma_*}}$. 
The self-similar profile can be viewed as a particular solution of the Euler equation in rescaled variables that we will now attempt to explain. For now, let $\tilde\lambda(t), \tilde\mu(t)$ be arbitrary functions of time (how they are chosen will be discussed later). We write:
\[\omega(x,t)=\frac{1}{\tilde\lambda(t)} G(\frac{x \tilde\mu(t)}{\tilde\lambda(t)^{\gamma_*}},t).\] As long as $\tilde\lambda,\tilde\mu$ are nice enough, it is clear that one can do this for any solution locally in time. This means that the Euler equation can be rewritten as:
\begin{equation}\label{SSModel}\frac{1}{\tilde\lambda(t)}\partial_t G-\frac{\tilde\lambda'(t)}{\tilde\lambda(t)^2} G-\frac{1}{\tilde\lambda(t)}\Big(\gamma_*\frac{\tilde\lambda'(t)}{\tilde\lambda(t)}+\frac{\tilde\mu'(t)}{\tilde\mu(t)}\Big)z\cdot\nabla G+\frac{1}{\tilde\lambda(t)^2}u_G\cdot\nabla G=\frac{1}{\tilde\lambda(t)^2}G\cdot \nabla u_{G}\end{equation}
Observe that 
\begin{equation}\label{SSExactSolution} (\tilde\lambda(t),\tilde\mu(t),G(\cdot,t))=((1-t),1,\bar G)\end{equation} is an exact solution to \eqref{SSModel}. Now, it is natural to ask the following 
\begin{question}
Is there a sense in which the blow-up solution \eqref{SSExactSolution} of \eqref{SSModel} is stable?
\end{question}
This is the question that we are concerned with. Recall that $\bar G$ is axi-symmetric without swirl. Thus, there are several levels at which this question can be asked: first, within the class of axi-symmetric solutions without swirl; second, within the class of general axi-symmetric solutions; finally, among general solutions to the 3D Euler equation. For this work, we content ourselves with answering the first and second questions. Studying the stability with respect to general 3D perturbations seems to be a difficult problem. 
\\

\subsubsection*{Linearized Operator and Modulation}
To answer the above question, we essentially have to linearize around the base solution \eqref{SSExactSolution} and study the behavior of the linearized operator. Once we are working in a setting where the linearized operator is coercive, just like in basic ODE theory, we should expect stability. Observe that the equation \eqref{SSModel} is underdetermined and so $\tilde\lambda(t)$ and $\tilde\mu(t)$ are used to keep perturbations within spaces where we have coercivity. This is similar to how the (different) parameters $\lambda$ and $\mu$ were used in \cite{E_Classical} to construct $\bar F$ in the first place. Indeed, if one just considers the relevant linearized operator for only axi-symmetric solutions \emph{without} swirl, then the linear estimates were essentially done in \cite{E_Classical}. This already allows us to assert Theorem \ref{MainTheorem} with relative ease. 

When we consider more general perturbations (such as perturbations with non-trivial swirl), then the linear and non-linear estimates become more difficult. In fact, the stability of the profile with respect to perturbations with swirl is only due to a matching of constants that shows that the linearized operator coming from the equation for the swirl is not worse than that of the axial vorticity. That the linear growth from the swirl is weaker than that from the axial vorticity is not obvious. The fact that this is the case is only due to the exact structure of the equation for the swirl and the nature of the coupling between the swirl and the axial vorticity. 

\subsection{Previous Works}

There are numerous works on the local and global well-posedness of the incompressible Euler equation in dimensions $d\geq 2$ and the blow-up problem.  We refer the reader to \cite{MB,Gibbon2008,BardosTitiReview,ConstantinReview,KiselevReview,HouLuo,EJE, EJB} for more in-depth reviews of the history of the singularity problem and related issues regarding the Euler equation. For the purpose of this discussion, we will only briefly discuss the issue of self-similar blow-up for the Euler equation. For the Euler and Navier-Stokes equations, the vast majority of the literature on self-similar blow-up was devoted to ruling out their existence \cite{Chae2007,ChaeShv,Tsai1998,NRS96}. Generally, these works have assumed the existence of a self-similar solution with (relatively) \emph{rapid} decay at spatial infinity. This seems to have been motivated by a desire to get self-similar solutions that themselves have finite energy. It seems that these assumptions were too restrictive. One purpose of our work is to emphasize that the lack of decay of the self-similar profile itself is \emph{not} an indication that the blow-up is "coming from infinity" nor is it impossible to get finite energy solutions from unbounded purely self-similar ones. We remark that there are also numerous works on "forward" self-similar solutions, where the data is already singular. Such solutions are also very interesting and can be used to prove results related to instability and non-uniqueness in various settings \cite{Elling, JiaSverakSS, VishikNon1, VishikNon2}.

Next, let us comment on the issue of "stability of blow-up." This work does not give a full picture of the stability question in our setting, but a partial result is given. Indeed, a natural question one could ask is: 
\begin{question}If $\omega_0$ gives rise to a blow-up, is it true that there is blow-up for the Euler equation for any data in a small\footnote{In a natural topology.} neighborhood of $\omega_0?$ 
\end{question}
The answer to this question remains open for the solution constructed in \cite{E_Classical}, but we do show an even stronger result for general axi-symmetric solutions that are odd in $x_3$ and are close to $\omega_0$ in a weighted Sobolev space (the weight basically imposes that perturbations are vanish to high order near the point of blow-up). Another way to think of the condition is that perturbations should be more regular than $\omega_0$ itself. Removing some of these conditions on the perturbation seems to be an interesting and challenging problem since many of the arguments rely heavily on the imposed symmetries. Some partial results in the negative direction were given in \cite{VasseurVishik} (though the notion of stability there seems to be quite stringent). 


Aside from the Euler equation, the issue of stable self-similar blow-up has been addressed many times before in other contexts. Indeed, our proof makes use of modulation techniques that have been developed by Merle, Raphael, Martel, Zaag and others. This technique has been very efficient to describe the blowup  the nonlinear wave equation \cite{MR3302641}, the nonlinear heat equation \cite{MR1427848}, reaction diffusion systems \cite{MR3779644,MR3846237}, the nonlinear Schrodinger equation \cite{MR2150386,MR2257393}, the GKDV equation \cite{MR3179608}, the Burgers equation \cite{CGM}, and many others.
Note that for 3D Euler comparing to all the previous models cited above there exists a group of scaling transformations of dimension larger than two that leaves the equation invariant and the incompressibility induces a nontrivial nonlocal effect. Here this degeneracy is a real difficulty since one does not know in advance which scaling law the flow will select.

We remark, finally, that after the completion of this work we came to know of the nice work of Chen and Hou \cite{ChenHou}, where many of the methods used in \cite{E_Classical} were adapted to the setting of the numerical work of Luo and Hou \cite{HouLuo}. They also established a form of stability that allows for compactly supported vorticity as is done here. Though the terminology used in \cite{ChenHou} is slightly different from that of \cite{E_Classical} and ours here, it seems that the methods are quite similar (notwithstanding technical differences due to the difference of the setting).

\subsection{Organization of the Paper}

In Section \ref{Setup} we recall the axi-symmetric Euler equation and setup the problem we intend to solve in this paper. In Section \ref{Coercivity}, we discuss the coercivity of the linearized operators. In Section \ref{DerivationOfTheLaws} we derive the "laws" that the modulation parameters $\mu$ and $\lambda$ should satisfy. In Section \ref{EllipticEstimates} we discuss the elliptic estimates that we need. In Section \ref{Final} we give the final energy estimate from which the main result follows. The appendix collects a few useful tools \ref{ProductRules}. Sections \ref{Setup}, \ref{Coercivity}, and \ref{SwirlParts} are the heart of the matter.

\subsection{Notation}\label{Notation}

In this subsection we give a guide to the notation used in the rest of the paper. 

\subsubsection*{Functions, variables, and parameters}
With the exception of introductory parts of this work, $r$ will generally denote the two dimensional radial variable: \[r=\sqrt{x_1^2+x_2^2}.\]
$\theta$ will denote\footnote{Except in Section 2.1-2.2 where it can also be taken to denote the two dimensional polar angle.} the angle between $r$ and $x_3$: \[\theta=\arctan(\frac{x_3}{r}),\] so that $\theta=0$ corresponds to the plane $x_3=0$ while $\theta=\pm \frac{\pi}{2}$ corresponds to the $x_3$ axis. $\rho$ will denote the three dimensional radial variable \[\rho=\sqrt{r^2+x_3^2}.\] $R$ will denote $\rho^\alpha$: \[R=\rho^\alpha\] (where $\alpha>0$ is a constant which will be small). Because the axial vorticity will be odd in the third variable, the $\theta$ variable will generally be in $[0,\pi/2]$ while the $R$ (later called $y$ or $z$) variable will usually be in $[0,\infty)$.  
The main parameters we will use are:
\[\eta=\frac{99}{100}, \qquad \alpha>0,\qquad \gamma=1+\frac{\alpha}{10}.\] $\alpha$ will be chosen at the end to be very small. 
In the later sections we use the functions \[\Gamma(\theta)=(\sin(\theta)\cos^2(\theta))^{\alpha/3}\] and \[K(\theta)=3\sin(\theta)\cos^2(\theta).\] Sometimes there will be a constant $c$ associated to $\Gamma$ written as $\frac{\Gamma}{c}$. This constant $c$ always satisfies $\frac{1}{10}\leq c\leq 10$ and it is a normalization constant. 

\subsubsection*{Norms and Operators}
We define the H\"older spaces using the norms:
\[|f|_{C^{\beta}(\mathcal{K})}=\sup_{x\in \mathcal{K}} |f|+\sup_{x\not=y} \frac{|f(x)-f(y)|}{|x-y|^\beta},\]
\[|f|_{C^{1,\beta}(\mathcal{K})}=|f|_{C^\beta}+|\nabla f|_{C^\beta}.\] When the domain $\mathcal{K}$ is clearly understood from context, we often omit writing it.

{\emph{ Warning}:} In most of this paper, we will be working in some form of polar or spherical coordinates and will be using spaces like $L^2([0,\infty)\times [0,\pi/2])$ or similar spaces where the relevant variables are a radial and angular variable. The norm on this space is the usual $L^2$ norm with the measure $dr d\theta$ and not the measure $r dr d\theta$. 

We define the weights \[w(z)=\frac{(1+z)^2}{z^2},\]  \[w_\theta(\theta)=\frac{1}{\sin(2\theta)^\frac{\gamma}{2}},\]  and \[W=w \cdot w_\theta.\]
We also define the differential operators:
\[D_\theta(f)=\sin(2\theta)\partial_\theta f, \qquad D_R(f)=R\partial_R f,\] and
\[D_z(f)=z\partial_z f.\]

For each $k\in\mathbb{N}$ we define the spaces $\mathcal{H}^k$ and $\mathcal{W}^{k,\infty}$ using the following norms. 
We define the $\mathcal{H}^k([0,\infty)\times[0,\pi/2])$ norm: \begin{equation}\label{HsNorm} |f|_{\mathcal{H}^k}^2=\sum_{i=0}^k|(D_R)^if \frac{w}{\sin^{\eta/2}(2\theta)}|_{L^2}^2+\sum_{i\geq 1, 1\leq i+j\leq k}|(D_\theta)^iD_R^jfW|^2_{L^2}.\end{equation} 
We also define the $\mathcal{W}^{l,\infty}$ norm:
\[|f|_{\mathcal{W}^{l,\infty}}=\sum_{0\leq k+j\leq l}|(z+1)^k\partial_z^k \Big(\frac{\sin(2\theta)}{\gamma-1+\sin(2\theta)}\partial_\theta\Big)^j f \sin(2\theta)^{-\frac{\alpha}{5}}|_{L^\infty}.\] 

In Section \ref{Coercivity} we inductively define an inner product on $\mathcal{H}^k$ which gives a norm equivalent to the $\mathcal{H}^k$ norm (with equivalence constant independent of $\alpha>0$). This inner-product is used to get coercivity out of the linearized operator $\mathcal{M}_F$ defined below in \eqref{LinearisedOpeMeps}. We remark that since there will be four linearized operators associated to $\eps$, $\mathcal{U}^\phi$,  $\partial_\theta\mathcal{U}^\phi$, and $\tan(\theta)\mathcal{U}^\phi$, we will actually be using four different inner products all defining norms equivalent to the $\mathcal{H}^k$ norm. For the first two, see Section \ref{Coercivity} and for the second two, see Section \ref{SwirlParts}.

\begin{remark}
It is clear that any smooth function vanishing at $0$ and $\pi/2$ and with sufficient $z$ decay belongs to $\mathcal{W}^{l,\infty}$ due to the inequality:
\[\sup_{x\in [0,1], \alpha\in [0,1]}\frac{x^{1-\frac{\alpha}{5}}}{\frac{\alpha}{10}+x}\leq 1.\] The basic example of a $\mathcal{W}^{l,\infty}$ function is the function \[\Gamma(\theta) \frac{z}{(1+z)^2}.\]
\end{remark}
Finally, define
\begin{definition}
 Let the integral operator $L_{12}: L^2([0,\infty)\times [0,\pi/2])\rightarrow L^2([0,\infty))$ be \[L_{12}(f)(z)=\int_{z}^\infty 3\int_0^{\pi/2}\frac{f(r,\theta)\sin(\theta)\cos^2(\theta)}{r}d\theta dr.\] 
\end{definition}

\section{The Setup}\label{Setup}
In this section we discuss the general setup and strategy that we will follow. We will first make a change of variables on the axi-symmetric Euler equation without swirl as in \cite{E_Classical}. Next, we will introduce similarity variables and the modulation parameters and show that the perturbation $\varepsilon$ from the purely self-similar solution decays exponentially. This is similar to the authors' previous work \cite{EGM} but with several added difficulties since the linearized problem is more delicate.

\subsection{Axi-symmetric Euler} 

We start with the axi-symmetric 3D incompressible Euler equations :
\begin{align} 
\partial_t u^\phi +u^r\pr_r u^\phi+ u^3\pr_{x_3} u^\phi =- \frac{u^\phi u^r}{r} & \\ 
\partial_t \omega  +u^r\pr_r\omega+ u^3\pr_{x_3} \omega = \frac{2u^\phi \pr_{x_3}u^\phi}{r}+\frac{\omega u^r}{r}\\ 
\end{align} 

where $(u^r,u^3)$ is determined as follows. 
First we solve the elliptic problem\footnote{Note that the $-$ sign on the left hand side is not conventionally added, but there is no difference up to a change of variables.}:
\[\partial_r(\frac{1}{r}\partial_r\tilde\psi)+\frac{1}{r}\partial_{33}\tilde\psi=-\omega\] and then we set \[u_r=\frac{1}{r}\partial_3\tilde\psi \qquad u_3=-\frac{1}{r}\partial_r\tilde\psi.\] 
Next, in order to fix the homogeneity, we set $\tilde\psi=r\psi$. 

Then we have:
\[u_r=\partial_3\psi\qquad u_3=-\frac{1}{r}\psi-\partial_r\psi\] and 
\[\partial_r(\frac{1}{r}\partial_r(r\psi))+\partial_{33}\psi=-\omega,\] which leads us to the system:
\begin{align}\label{axisymmetric3DE}
&\pr_t u^\phi+u_r\partial_r u^\phi+u_3\partial_3 u^\phi=-\frac{1}{r}u_r u^\phi\\
&\partial_t \omega+u_r\partial_r\omega+u_3\partial_3\omega=\frac{1}{r}u_r \omega+2\frac{u^\phi\pr_3 u^\phi}{r},
\end{align}
\begin{equation}\label{3dBSL}
-\partial_{rr}\psi-\partial_{33}\psi-\frac{1}{r}\partial_r\psi+\frac{\psi}{r^2}=\omega,
\end{equation}
\begin{equation}\label{PsiTou}
u_r=\partial_3\psi\qquad u_3=-\frac{1}{r}\psi-\partial_r\psi.
\end{equation}
The problem is normally set on the spatial domain $\{(r,x_3)\in [0,\infty)\times (-\infty,\infty)\}$ and  the elliptic problem \eqref{3dBSL} is solved with the boundary condition $\psi=0$ on $r=0$. We will start by imposing an odd symmetry on $\omega$ with respect to $x_3$. That is, we search for solutions with: \[\omega(r,x_3)=-\omega(r,-x_3)\] for all $r,x_3$. Consequently, we may reduce to solving on the domain $[0,\infty)\times[0,\infty)$ while enforcing that $\psi$ vanish on $r=0$ and $x_3=0$ when solving \eqref{3dBSL}:
\begin{equation} \label{BCForPsi} \psi(r,0)=\psi(0,x_3)=0, \end{equation} for all $r,x_3\in [0,\infty)$. 

\subsection{Passing to a form of polar coordinates}

First we define $\rho=\sqrt{r^2+x_3^2}$ and $\theta=\arctan(\frac{x_3}{r})$ and set $R=\rho^\alpha$ for some (small) constant $\alpha>0$. Then we introduce new functions $\omega(r,x_3)=\Omega(R,\theta)$, $u^\phi=\rho U^\phi(R,\tet)$ and $\psi(r,x_3)=\rho^2 \Phi_\Omega (R,\theta)$. We now show the forms of \eqref{axisymmetric3DE}, \eqref{3dBSL}, and \eqref{PsiTou} in the new coordinates.
Note that 
\[
\partial_r\rightarrow  \frac{\cos(\theta)}{\rho}\alpha R \partial_R-\frac{\sin(\theta)}{\rho} \partial_\theta \qquad \partial_3\rightarrow \frac{\sin(\theta)}{\rho}\alpha R\partial_R+\frac{\cos(\theta)}{\rho} \partial_\theta
\]

\subsubsection*{$u$ in terms of $\Phi_\Omega$}
From \eqref{PsiTou} and the above facts we see:
\[u^r=\rho\Big(2\sin(\theta)\Phi_\Omega+\alpha\sin(\theta)R\partial_R\Phi_\Omega+\cos(\theta)\partial_\theta\Phi_\Omega\Big)\] while \[u^3=\rho\Big(-\frac{1}{\cos(\theta)}\Phi_\Omega-2\cos(\theta)\Phi_\Omega-\alpha\cos(\theta)R\partial_R\Phi_\Omega+\sin(\theta)\partial_\theta\Phi_\Omega\Big)\]

\subsubsection*{Evolution equation for $\Omega$ and $U^\phi$}
Observe that using the above calculations, \eqref{axisymmetric3DE} becomes 
\begin{align}\label{OmegaEvolution}
&\pr_t U^\phi+U(\Phi_\Omega)\pr_\tet U^\phi+V(\Phi_\Omega)\alpha R\pr_R U^\phi=-\mathcal{R}(\Phi_\Omega)U^\phi-V(\Phi_\Omega)U^\phi,\\
&\partial_t\Omega+U(\Phi_\Omega)\partial_\theta\Omega+V(\Phi_\Omega)\alpha R\partial_R\Omega=\mathcal{R}(\Phi_\Omega)\Omega+\frac{2}{\cos\tet}U^\phi(\sin(\tet)\alpha R\pr_R+\cos(\tet)\pr_\tet)U^\phi,
\end{align}
where
\begin{align}
&U(\Phi_\Omega)=-3\Phi_\Omega-\alpha R\partial_R\Phi_\Omega,\quad V(\Phi_\Omega)=\partial_\theta\Phi_\Omega-\tan(\theta)\Phi_\Omega,\\
&\mathcal{R}(\Phi_\Omega)=\frac{1}{\cos(\theta)}\Big(2\sin(\theta)\Psi+\alpha\sin(\theta)R\partial_R\Psi+\cos(\theta)\partial_\theta\Psi\Big).
\end{align}

\subsubsection*{Relation between $\Phi_\Omega$ and $\Omega$}
After some calculations\footnote{See the calculation preceding \eqref{PolarBSL}.} \eqref{3dBSL} becomes: \begin{equation}\label{PolarBSL1}-\alpha^2R^2\partial_{RR}\Phi_\Omega-\alpha(5+\alpha)R\partial_R\Phi_\Omega-\partial_{\theta\theta}\Phi_\Omega+\partial_\theta\big(\tan(\theta)\Phi_\Omega\big)-6\Phi_\Omega=\Omega.\end{equation} with the boundary conditions:
\[\Phi_\Omega(R,0)=\Phi_\Omega(R,\frac{\pi}{2})=0\] for all $R\in [0,\infty)$. 

\subsection{Self-similar variables}
Indeed it is shown in \cite{E_Classical} that there exists a self-similar solution of the form for the vanishing swirl system ($U^\phi=0$):
\[\Omega=\frac{1}{T-t}F\Big(\frac{R}{(T-t)^{1+\delta}},\theta\Big)\] where $\delta$ is a small real number depending on $\alpha$. Recall that $F=F_*+\alpha^2 g$, where \[F_*=F_*(\alpha)=\frac{\Gamma}{c}\frac{4\alpha z}{(1+z)^2},\qquad |g|_{\mathcal{H}^k}\leq C,\] with $C$ a constant independent of $\alpha$.  We introduce the self-similar variable \[z=\frac{R}{(T-t)^{1+\delta}}.\] It is easy to see that if $\Omega$ has the above form, then $\Phi_\Omega$ should have the form: \[\Phi_\Omega=\frac{1}{T-t}\Phi_F(z,\theta).\] Now we write the equations for $F$ and $\Phi_F$:

\begin{align}\label{EquationProfileF}
F+(1+\delta) z\partial_z F +U(\Phi_F)\partial_\theta F+\alpha V(\Phi_F) z\partial_z F=\mathcal{R}(\Phi_F)F 
\end{align}
\[U(\Phi_F):=-3\Phi_F-\alpha R\partial_R\Phi_F\quad V(\Phi_F):=\partial_\theta\Phi_F-\tan(\theta)\Phi_F, \quad \R(\Phi_F):=\frac{1}{\cos(\theta)}\Big(2\sin(\theta)\Phi_F+\alpha\sin(\theta)R\partial_R\Phi_F+\cos(\theta)\partial_\theta\Phi_F\Big),\]
\begin{align}\label{EquationProfilePhi}
-\alpha^2R^2\partial_{RR}\Phi_F-\alpha(5+\alpha)R\partial_R\Phi_F-\partial_{\theta\theta}\Phi_F+\partial_\theta\big(\tan(\theta)\Phi_F\big)-6\Phi_F=F.
\end{align}

To prove the stability of the profiles $(F,\Phi_F)$ we rescale \eqref{OmegaEvolution} and \eqref{PolarBSL1}.
A natural change of variables to do here will be 
\begin{align}\label{changeofvariable}
z&=\frac{R}{\lambda^{1+\delta}},\quad\quad\frac{ds}{dt}=\frac{1}{\lambda},\nonumber\\
 \Omega(R,t,\theta)&=\frac{1}{\lambda}\Xi\Big(\frac{R}{\lambda^{1+\delta}},s,\theta \Big),~~\Phi_\Omega(R,t,\theta)=\frac{1}{\lambda}\Phi_\Xi\Big(\frac{R}{\lambda^{1+\delta}},s,\theta \Big)~~ U^\Phi(R,t,\theta)=\frac{1}{\lambda}\tilde{U}^\phi\Big(\frac{R}{\lambda^{1+\delta}},s,\theta \Big).
\end{align}

Note that $\tilde{F}=F(\mu z,\theta)$, $\tilde{\Phi}_F=\Phi_F(\mu\cdot)$ is also a solution of \eqref{EquationProfileF} and \eqref{EquationProfilePhi}. This scaling invariance on \eqref{EquationProfileF} and \eqref{EquationProfilePhi} will induce an instability later on the linearized operator around $(F,\Phi_F)$.
To fix this instability, we introduce a new parameter $\mu:=\mu(t)$ and fix it through an orthogonality condition.
Hence, we introduce
$$\Xi(z)=W(\mu z),~~\Phi_\Xi=\Phi_W(\mu z),~~\mathcal{U}^\phi=\tilde{U}^\phi(\mu z)~~y=\mu z$$
where $(W,\mathcal{U}^\phi,\Phi_W)$ solves
\begin{align}\label{FirstscalingEvolution3}
&\mathcal{U}^\phi_s+\frac{\mu_s}{\mu}y\pr_y\mathcal{U}^\phi-\frac{\lambda_s}{\lambda}\mathcal{S}_\delta(\mathcal{U}^\phi) +U(\Phi_W)\partial_\theta \mathcal{U}^\phi+V(\Phi_W)\alpha y\partial_y \mathcal{U}^\phi=-\mathcal{R}(\Phi_W)\mathcal{U}^\phi-V(\Phi_W)\mathcal{U}^\phi,\\
&W_s+\frac{\mu_s}{\mu}y\pr_y W-\frac{\lambda_s}{\lambda}\mathcal{S}_\delta(W) +U(\Phi_W)\partial_\theta W+V(\Phi_W)\alpha y\partial_y W=\mathcal{R}(\Phi_W)W+2\mathcal{U}^\phi(\tan(\tet)\alpha y\pr_y+\pr_\tet)\mathcal{U}^\phi,
\end{align}
\begin{align}\label{FirstscalingEvolution4}
-\alpha^2z^2\partial_{zz}\Phi_W-\alpha(5+\alpha)z\partial_z\Phi_W-\partial_{\theta\theta}\Phi_W+\partial_\theta\big(\tan(\theta)\Phi_W\big)-6\Phi_W=W.
\end{align}
and
$$\mathcal{S}_\delta(W)=W+(1+\delta)y\pr_y W.$$
Now we linearize around $(F,0,\Phi_F)$ by setting,
$$W=F+\varepsilon,~~\Phi_W=\Phi_F+\Phi_\varepsilon,~~\mathcal{U}^\phi=\mathcal{U}^\phi+0.$$                                     
Hence, we obtain the following equation
\begin{align}\label{Equationepsilon}
\left\{\begin{array}{lll}
&\mathcal{U}^\phi_s+\frac{\mu_s}{\mu}y\pr_y\mathcal{U}^\phi-\Big(\frac{\lambda_s}{\lambda}+1\Big)\mathcal{S}_\delta(\mathcal{U}^\phi)+\mathcal{M}^\phi_F\mathcal{U}^\phi=N_1(\Phi_\eps,\mathcal{U}^\phi),\\
&\pr_s \eps+\frac{\mu_s}{\mu}y\pr_y \eps-\Big(\frac{\lambda_s}{\lambda}+1\Big)\mathcal{S}_\delta(\eps)+\mathcal{M}_F\eps=E+N_2(\eps)+N_3(\mathcal{U}^\phi),\\
&-\alpha^2y^2\partial_{yy}\Phi_\eps-\alpha(5+\alpha)z\partial_z\Phi_\eps-\partial_{\theta\theta}\Phi_\eps+\partial_\theta\big(\tan(\theta)\Phi_\eps\big)-6\Phi_\eps=\eps.
\end{array}
 \right.
 \end{align}
where $\mathcal{M}_F$ and $\mathcal{M}_F^\phi$ are the linearized operators given by
\begin{align}\label{LinearisedOpeMeps}
\mathcal{M}_F\eps=\mathcal{S}_\delta(\eps)+U(\Phi_F)\pr_\tet\eps +V(\Phi_F)\alpha y\pr_y \eps+U(\Phi_\eps)\pr_\tet F +V(\Phi_\eps)\alpha y\pr_y F-\mathcal{R}(\Phi_F)\eps-\mathcal{R}(\Phi_\eps)F,
\end{align}
\begin{align}\label{LinearisedopeU}
\mathcal{M}^\phi_F\Up=\mathcal{S}_\delta(\Up)+U(\Phi_F)\pr_\tet\Up +V(\Phi_F)\alpha y\pr_y \Up+(\mathcal{R}(\Phi_F)+V(\Phi_F))\Up,
\end{align}

$E$ is the error 
\begin{align}\label{Error}
E=-\frac{\mu_s}{\mu}y\pr_y F+\Big(\frac{\lambda_s}{\lambda}+1\Big)\mathcal{S}_\delta(F),
\end{align}
and the non-linear terms,
\begin{align}\label{nonlinearterms}
&N_1(\Phi_\eps,\Up)=-U(\Phi_\eps)\pr_\tet \Up -V(\Phi_\eps)\alpha y\pr_y \Up-(\mathcal{R}(\Phi_\eps)+V(\Phi_\eps))\Up,\\
&N_2(\eps)=-U(\Phi_\eps)\partial_\theta \eps-\alpha V(\Phi_\eps) y\partial_y \eps+\mathcal{R}(\Phi_\eps) \eps,\\
&N_3(\Up)=2\mathcal{U}^\phi(\tan(\tet)\alpha y\pr_y+\pr_\tet)\mathcal{U}^\phi.
\end{align}



We will allow $\mu$ and $\lambda$ to depend on $s$ to be able to fix $\pr_y\eps(0,\theta)=L_{12}(\eps)(0)=0$ for all $\theta$. The reason that we wish to keep this information on $\varepsilon$ is that this is precisely what will allow us to squeeze some damping out of the linearized operator $\mathcal{M}_F$. Also note that we will need $\pr_y\Up(y=0,\tet,s)=0$ for all $\tet\in[0,\frac{\pi}{2}]$ and $s\geq0$. This is propagated once we assume it initially. Note that even though the condition $\partial_y\eps(0,\theta)=0$ seems to require $\mu$ and/or $\lambda$ to depend on $\theta$,  the important property of the equation is that, once $L_{12}(\eps)(0)=0$, we have that $\Phi_\eps(0,\theta)=0$ for all $\theta$. Since all non-linear terms are roughly of the form $\eps\Phi_\eps$, the quadratic vanishing is propagated once we have that $L_{12}(\eps)(0)=0$. 

\subsubsection{The emergence of $L_{12}$ and the role of $F_*$}

One important fact that we will use in our analysis is that the solution $\Phi$ of the third equation in \eqref{Equationepsilon}, can be written as:
\[\Phi_\eps= \frac{1}{4\alpha}\sin(2\theta)L_{12}(\eps)+\bar\Phi_\eps,\] where \[|\partial_{\theta}^2\bar\Phi_\eps|_{\mathcal{H}^k}+\alpha|D_R\partial_\theta\bar\Phi_\eps|_{\mathcal{H}^k}+\alpha^2|D_R^2\bar\Phi_\eps|_{\mathcal{H}^k}\leq C|\eps|_{\mathcal{H}^k}\] (see Theorem \ref{Elliptic}). Consequently, we see from this that 
\begin{equation}\label{Approximation1} U(\Phi_\eps)=-3\Phi_\eps-\alpha D_y\Phi_\eps= -\frac{3}{4\alpha}\sin(2\theta)L_{12}(\eps)+O(1) \end{equation}
\begin{equation}\label{Approximation2} V(\Phi_\eps)=\partial_\theta\Phi_\eps-\tan(\theta)\Phi_\eps= \frac{1}{4\alpha}(2\cos(2\theta)-2\sin^2(\theta))L_{12}(\eps)+O(1)  \end{equation}
\begin{equation}\label{Approximation3} \mathcal{R}(\Phi_\eps)=\frac{1}{\cos(\theta)}(2\sin(\theta)\Phi_\eps+\alpha\sin(2\theta)D_y\Phi_\eps+\cos(\theta)\partial_\theta\Phi_\eps)= \frac{1}{2\alpha}L_{12}(\eps)+O(1),  \end{equation} where the $O(1)$ terms above are terms involving $\bar\Phi_\eps$ which satisfies bounds independent of $\alpha$. 

Note also that since $F=F_*+\alpha^2 g$ with \[F_*=\frac{\Gamma}{c}\frac{4\alpha y}{(1+y)^2},\] we have that:
\begin{equation}\label{ApproximationF1} U(\Phi_F)= -3\sin(2\theta)\frac{1}{1+y} +O(\alpha),\end{equation}
\begin{equation}\label{ApproximationF2} V(\Phi_F)= (2\cos(2\theta)-2\sin^2(\theta))\frac{1}{1+y} +O(\alpha), \end{equation}
\begin{equation}\label{ApproximationF3} \mathcal{R}(\Phi_F)= \frac{2}{1+y}+O(\alpha).  \end{equation}
It will be helpful to keep these approximations in mind when studying the leading order behavior of $\mathcal{M}_F$ and $\mathcal{M}_F^\phi$. 

\subsection{General Strategy}

As explained in the beginning of the section, our goal will now be to use some coercivity from the terms $\mathcal{M}_F(\varepsilon)$ and $\mathcal{M}_F^\phi(\mathcal{U}^\phi)$ to prove:
\[\frac{d}{dt} \bar{\mathcal{E}}\leq -c\bar{\mathcal{E}}+C\bar{\mathcal{E}}^{3/2},\] for some constants $c,C>0$. This will then show, with a suitable bootstrap argument (as in Section 3.1 of \cite{EGM}), that if $\mathcal{E}(\varepsilon_0)$ is sufficiently small, we have that $\mathcal{E}(\varepsilon)$ decays exponentially as $s\rightarrow \infty$. The focus will now be to prove coercivity estimates on $\mathcal{M}_F$ as well as the relevant elliptic and product estimates that will enable us to establish the above. The consequence is the following stability theorem from which Theorem \ref{MainTheorem} and its Corollary follow.

\subsection{Stability Theorem}
\begin{definition}[Energy]
Fix $k\geq 4.$ For $\eps\in \mathcal{H}^k$ and $\mathcal{U}^\phi\in\mathcal{H}^{k+1}$ with $\tan(\theta)\mathcal{U}^\phi\in\mathcal{H}^k,$ define \[\mathcal{E}(\eps,\mathcal{U}^\phi)=\| \eps\|_{\mathcal{H}^k}+\|\Up\|_{\mathcal{H}^{k+1}}+\|\pr_\tet\Up\|_{\mathcal{H}^{k}}+\|\tan(\tet)\Up\|_{\mathcal{H}^{k}}.\]

\end{definition}
\begin{theorem}\label{StabilityTheorem}
For $k\geq 4$, there exists $\alpha_0>0$ small so that for all $\alpha<\alpha_0$, there is a $\delta_0>0$ and $\kappa>0$ so that for every initial $(\eps_0, \mathcal{U}^\phi_0)$ with $\mathcal{E}(\eps_0,\mathcal{U}^\phi_0)<\delta_0 \alpha^{3/2}$ and $L_{12}(\eps_0)(0)=0$, there is an associated unique global solution to \eqref{Equationepsilon} so that:
\[|\mu_s|+|\frac{\lambda_s}{\lambda}+1|+\mathcal{E}(\eps,\mathcal{U}^\phi)(s)\leq C\mathcal{E}(\eps_0,\mathcal{U}^\phi_0)e^{-\kappa s}\] for all $s\geq 0$. 
\end{theorem}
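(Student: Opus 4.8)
The plan is to run a standard modulation-plus-bootstrap scheme, so let me lay out how I would organize it. First I would set up the bootstrap: assume on a maximal time interval $[0,S^*)$ the a priori bounds $\mathcal{E}(\eps,\mathcal{U}^\phi)(s)\leq 2C\mathcal{E}(\eps_0,\mathcal{U}^\phi_0)e^{-\kappa s}$ together with $|\mu_s|+|\lambda_s/\lambda+1|\leq C'\mathcal{E}(\eps_0,\mathcal{U}^\phi_0)e^{-\kappa s}$, and also maintain the orthogonality/normalization conditions $\partial_y\eps(0,\theta)=L_{12}(\eps)(0)=0$ and $\partial_y\mathcal{U}^\phi(0,\theta)=0$ for all $\theta$. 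The local existence of the modulated solution and the continuity of $(\eps,\mathcal{U}^\phi,\mu,\lambda/\lambda)$ in $\mathcal{H}^k$-type norms come from the well-posedness theory for \eqref{Equationepsilon}; the content of the theorem is to close the bootstrap with the constants improved by a factor $<1$.

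The core is a differential inequality for a well-chosen energy $\bar{\mathcal{E}}$ equivalent to $\mathcal{E}(\eps,\mathcal{U}^\phi)^2$, built from the four $\alpha$-independent inner products on $\mathcal{H}^k$ adapted respectively to $\mathcal{M}_F$ acting on $\eps$ and to $\mathcal{M}_F^\phi$ acting on $\mathcal{U}^\phi,\partial_\theta\mathcal{U}^\phi,\tan(\theta)\mathcal{U}^\phi$ (from Sections \ref{Coercivity} and \ref{SwirlParts}). Differentiating $\bar{\mathcal{E}}$ along \eqref{Equationepsilon}, the terms to control are: (i) the linear terms $\langle \mathcal{M}_F\eps,\eps\rangle$ and $\langle \mathcal{M}_F^\phi\mathcal{U}^\phi,\mathcal{U}^\phi\rangle$ and their higher-derivative analogues, which the coercivity of Section \ref{Coercivity} turns into $-c\bar{\mathcal{E}}$ (this is where $\partial_y\eps(0,\cdot)=L_{12}(\eps)(0)=0$ is essential — it is exactly what makes $\mathcal{M}_F$ dissipative); (ii) the transport/scaling terms $\frac{\mu_s}{\mu}y\partial_y$ and $(\frac{\lambda_s}{\lambda}+1)\mathcal{S}_\delta$, which by the modulation bounds contribute at most $C\mathcal{E}^{1/2}\cdot\bar{\mathcal{E}}$, hence are swallowed for $\mathcal{E}_0$ small; (iii) the error term $E$ from \eqref{Error}, which is estimated by the modulation parameters and is $O(\mathcal{E}^{1/2}\bar{\mathcal{E}})$ once we show the laws for $\mu,\lambda$ are chosen (Section \ref{DerivationOfTheLaws}) so that $E$ is orthogonal to the bad directions and small; (iv) the nonlinear terms $N_1,N_2,N_3$, which after the elliptic estimate $\Phi_\eps=\frac{1}{4\alpha}\sin(2\theta)L_{12}(\eps)+\bar\Phi_\eps$ with $\bar\Phi_\eps$ bounded $\alpha$-independently, and after the product rules of Appendix \ref{ProductRules}, are shown to be cubic: $O(\bar{\mathcal{E}}^{3/2})$. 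Putting these together gives $\frac{d}{ds}\bar{\mathcal{E}}\leq -c\bar{\mathcal{E}}+C\bar{\mathcal{E}}^{3/2}$; since $\bar{\mathcal{E}}(0)\leq C\delta_0^2\alpha^3$ is tiny, Gronwall gives $\bar{\mathcal{E}}(s)\leq \bar{\mathcal{E}}(0)e^{-2\kappa s}$ for $\kappa=c/2$, which improves the bootstrap on $\mathcal{E}$; feeding this back into the modulation laws (which express $\mu_s$ and $\lambda_s/\lambda+1$ as bounded functionals of $\eps,\mathcal{U}^\phi$ plus quadratic corrections) improves the bootstrap on the parameters; hence $S^*=\infty$.

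A point needing care, and which I would isolate as a separate step, is the derivation and solvability of the modulation laws: the two orthogonality conditions $L_{12}(\eps_0)(0)=0$ and $\partial_y\eps(0,\theta)=0$ must be propagated by \eqref{Equationepsilon}, and this requires choosing $\frac{\mu_s}{\mu}$ and $\frac{\lambda_s}{\lambda}$ so that the evolution of these scalar/one-parameter quantities closes. The key structural observation (noted in the excerpt) is that once $L_{12}(\eps)(0)=0$ one has $\Phi_\eps(0,\theta)=0$, so all nonlinear terms vanish quadratically at $y=0$ and cannot destroy the constraint; one then needs a nondegeneracy (invertibility of a $2\times2$-ish matrix of pairings with $y\partial_y F$ and $\mathcal{S}_\delta(F)$ at $y=0$) to solve for the modulation parameters, which holds because $F_*=\frac{\Gamma}{c}\frac{4\alpha y}{(1+y)^2}$ has a genuine, $\alpha$-calibrated profile.

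The main obstacle I expect is item (i) combined with the swirl coupling $N_3$ and the mixed terms $U(\Phi_\eps)\partial_\theta F$, $V(\Phi_\eps)\alpha y\partial_y F$, $\mathcal{R}(\Phi_\eps)F$ inside $\mathcal{M}_F$: the dangerous part of $\Phi_\eps$ is $\frac{1}{4\alpha}\sin(2\theta)L_{12}(\eps)$, so these terms carry a $\frac1\alpha$ that is only compensated because $F$ itself carries an $\alpha$ (so $U(\Phi_\eps)\partial_\theta F$ etc. are $O(1)$, not $O(1/\alpha)$), and the resulting $O(1)$ contribution must still be dominated by the $-c\bar{\mathcal{E}}$ from the good part of $\mathcal{M}_F$ with a constant $c$ that is \emph{independent} of $\alpha$ — i.e.\ the coercivity constant and these $O(1)$ perturbations must beat each other by a fixed margin uniformly as $\alpha\to0$. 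For the swirl, the analogous danger is that the $\mathcal{U}^\phi$-equation, after substituting $\Phi_\eps$, could produce linear growth stronger than the axial vorticity's; as emphasized in the introduction, the saving grace is the exact matching of constants in $\mathcal{M}_F^\phi$ versus $\mathcal{M}_F$ and in the coupling $N_3\sim\mathcal{U}^\phi(\tan\theta\,\alpha y\partial_y+\partial_\theta)\mathcal{U}^\phi$ — showing this matching is the technical heart, and is exactly what Section \ref{SwirlParts} is for. I would therefore prove the coercivity of $\mathcal{M}_F$ and $\mathcal{M}_F^\phi$ with explicit $\alpha$-uniform constants first, then the elliptic and product estimates, then assemble the energy inequality last.
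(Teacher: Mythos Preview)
Your plan matches the paper's approach almost exactly: four adapted $\mathcal{H}^k$ inner products giving $\alpha$-uniform coercivity, modulation to preserve $L_{12}(\eps)(0)=\partial_y\eps(0,\cdot)=0$, elliptic and product estimates, and a final differential inequality closed by bootstrap. Two places where your accounting is imprecise and the paper does something specific:

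\textbf{(a)} The nonlinear terms are not $O(\bar{\mathcal{E}}^{3/2})$ with an $\alpha$-independent constant. The elliptic estimate puts a $\frac{1}{\alpha}$ in $U(\Phi_\eps),V(\Phi_\eps),\mathcal{R}(\Phi_\eps)$, the modulation laws give $|\lambda_s/\lambda+1|+|\mu_s/\mu|\lesssim\alpha^{-1}|\eps|_{\mathcal{H}^k}$, and the product/transport rules of Appendix~\ref{ProductRules} lose $(\gamma-1)^{-1/2}\sim\alpha^{-1/2}$. The actual inequality is $\frac{d}{ds}\bar{\mathcal{E}}\leq -c\,\bar{\mathcal{E}}+C\alpha^{-3/2}\bar{\mathcal{E}}^{3/2}$; this is precisely why the smallness threshold is $\delta_0\alpha^{3/2}$ and not just $\delta_0$.

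\textbf{(b)} The error $E$ is \emph{not} higher-order on its own, and it is not handled by ``orthogonality to bad directions'' in the usual modulation-theory sense. After using $\mu_s/\mu=(2+\delta)(\lambda_s/\lambda+1)$ one has $E=(\lambda_s/\lambda+1)(F-y\partial_y F)$; since $|\lambda_s/\lambda+1|\sim\alpha^{-1}|\eps|$ and $|F-y\partial_yF|_{\mathcal{H}^k}\sim\alpha$, this gives $(E,\eps)_{\mathcal{H}^k}\sim|\eps|_{\mathcal{H}^k}^2$, the \emph{same} order as the coercive term. The paper's device is to add and subtract the rank-one piece $\frac{2\Gamma y^2}{c(1+y)^3}L_{12}\!\big(\tfrac{3\sin(2\theta)}{1+y}\partial_\theta\eps\big)(0)$ (the term distinguishing $\mathcal{L}_{F_*}^T$ from $\mathcal{L}_{F_*}$): coercivity is invoked for $\mathcal{M}_F$ \emph{minus} this piece (still $\geq c|\eps|^2$ by Section~\ref{Coercivity}), while $E$ \emph{minus} this same piece becomes, after writing $F=F_*+\alpha^2 g$, proportional to $\alpha(\lambda_s/\lambda+1)-L_{12}\!\big(\tfrac{3\sin(2\theta)}{1+y}\partial_\theta\eps\big)(0)$, which is $O(\sqrt{\alpha}\,|\eps|+|\mathcal{U}^\phi|^2)$ exactly because of the form of the $\lambda$-law \eqref{LambdaEquation}. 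You should isolate this cancellation explicitly rather than subsume it under ``$E$ is small once the laws are chosen.''
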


\begin{corollary}
Under the conditions of Theorem \ref{StabilityTheorem}, \[\mu(s)\rightarrow \mu_{\infty}\] exponentially as $s\rightarrow\infty$ and there exists $T_*$ so that \[\lambda(s)\exp(s)\rightarrow \frac{1}{T_*}\approx 1,\] exponentially fast as $s\rightarrow\infty$. Consequently, $\frac{T_*}{T_*-t}\lambda(t)\rightarrow 1$ as $t\rightarrow T_*$. \end{corollary}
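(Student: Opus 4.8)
The plan is to simply integrate the modulation laws using the exponential decay already granted by Theorem \ref{StabilityTheorem}, then translate back through the change of variables \eqref{changeofvariable}. First, from Theorem \ref{StabilityTheorem} we have the pointwise-in-$s$ bound $|\mu_s(s)|\leq C\mathcal{E}(\eps_0,\mathcal{U}^\phi_0)e^{-\kappa s}$. Hence $s\mapsto \mu(s)$ is Cauchy as $s\to\infty$ since for $s_2>s_1$ we get $|\mu(s_2)-\mu(s_1)|\leq \int_{s_1}^{s_2}|\mu_s|\,d\sigma \leq \tfrac{C}{\kappa}\mathcal{E}(\eps_0,\mathcal{U}^\phi_0)e^{-\kappa s_1}$; so $\mu(s)\to\mu_\infty$ for some limit $\mu_\infty$, and moreover $|\mu(s)-\mu_\infty|\leq \tfrac{C}{\kappa}\mathcal{E}(\eps_0,\mathcal{U}^\phi_0)e^{-\kappa s}$, i.e.\ the convergence is exponential. (One should note in passing that $\mu_\infty$ is close to the initial value of $\mu$, hence close to $1$ if we normalize $\mu(0)=1$, using that $\mathcal{E}(\eps_0,\mathcal{U}^\phi_0)<\delta_0\alpha^{3/2}$ is small.)

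Next I would treat $\lambda$. Write $g(s):=\frac{\lambda_s}{\lambda}+1 = \frac{d}{ds}\big(\log\lambda(s)\big)+1$, so that $\frac{d}{ds}\big(\log\lambda(s)+s\big)=g(s)$, i.e.\ $\frac{d}{ds}\log\big(\lambda(s)e^{s}\big)=g(s)$, with $|g(s)|\leq C\mathcal{E}(\eps_0,\mathcal{U}^\phi_0)e^{-\kappa s}$ by Theorem \ref{StabilityTheorem}. Integrating, $\log\big(\lambda(s)e^{s}\big)$ is Cauchy as $s\to\infty$ (same estimate as above), hence converges to some limit; exponentiating, $\lambda(s)e^{s}\to \ell_\infty$ for some constant $\ell_\infty>0$, exponentially fast, and we define $T_*:=1/\ell_\infty$. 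Since $|g|$ is small (order $\delta_0\alpha^{3/2}$) and $\lambda(0)e^{0}=\lambda(0)$ is normalized near $1$, we get $\ell_\infty$ near $1$, i.e.\ $T_*\approx 1$, which is the assertion $\lambda(s)\exp(s)\to \tfrac{1}{T_*}\approx 1$.

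Finally, for the last assertion, recall from \eqref{changeofvariable} that $\frac{ds}{dt}=\frac{1}{\lambda}$, so $\lambda$ is simultaneously a function of $t$ and of $s$; one must show $t\to T_*$ corresponds to $s\to\infty$ and extract the rate. From $\lambda(s)e^{s}\to \ell_\infty$ we have $\lambda(s)\sim \ell_\infty e^{-s}$, so $dt = \lambda\,ds \sim \ell_\infty e^{-s}\,ds$; integrating from $s$ to $\infty$ gives $T_*-t(s)\sim \ell_\infty e^{-s}$ (with $T_* = \int_0^\infty \lambda\,ds'$ finite, matching $1/\ell_\infty$ after normalization of the origin of time), so indeed $t\to T_*$ as $s\to\infty$. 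Then $\frac{T_*}{T_*-t}\lambda(t) = \frac{T_*\,\lambda(s)}{T_*-t(s)}$, and since $\lambda(s)\sim \ell_\infty e^{-s}$ and $T_*-t(s)\sim \ell_\infty e^{-s}$ with $T_* = 1/\ell_\infty$, the ratio tends to $\frac{(1/\ell_\infty)\ell_\infty e^{-s}}{\ell_\infty e^{-s}} = \frac{1}{\ell_\infty}\cdot\frac{1}{1}$... more carefully, writing $\lambda(s) = \ell_\infty e^{-s}(1+O(e^{-\kappa s}))$ and $T_*-t(s) = \int_s^\infty \lambda\,d\sigma = \ell_\infty e^{-s}(1+O(e^{-\kappa s}))$, the quotient $\frac{T_*\lambda(s)}{T_*-t(s)} = T_*\cdot\frac{1+O(e^{-\kappa s})}{1+O(e^{-\kappa s})} \to T_*/(1/\ell_\infty)$... since $T_* = 1/\ell_\infty$ this is $T_*\ell_\infty$; hmm, to get exactly $1$ one uses $T_*\ell_\infty=1$ only after identifying $T_*$ as $1/\ell_\infty$, which is consistent, giving the limit $1$.

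The main obstacle I expect is purely bookkeeping rather than conceptual: one must be careful that $T_*$ is \emph{defined} as $\int_0^\infty \lambda(s')\,ds'$ (equivalently as the blow-up time $\lim_{s\to\infty} t(s)$), verify this integral converges using $\lambda(s)\lesssim e^{-s}$, and then check that this same $T_*$ equals $1/\ell_\infty$ where $\ell_\infty = \lim \lambda(s)e^s$ --- this identification requires integrating the asymptotic $\lambda(s)\sim \ell_\infty e^{-s}$ and comparing, i.e.\ $\int_0^\infty \lambda\,ds$ versus $\int_0^\infty \ell_\infty e^{-s}\,ds = \ell_\infty$, which forces a normalization of the time origin so that $T_*=\ell_\infty$, not $1/\ell_\infty$; reconciling the two displayed claims in the corollary (``$\lambda e^s\to 1/T_*$'' and ``$\frac{T_*}{T_*-t}\lambda\to 1$'') pins down the correct normalization and is the only genuinely delicate point. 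Everything else is a one-line Cauchy-sequence argument applied to $\mu$ and to $\log(\lambda e^s)$.
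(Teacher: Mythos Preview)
Your approach is correct and is exactly what the paper intends: it gives no detailed proof, only a remark that the corollary ``follows directly from Theorem \ref{StabilityTheorem} in view of the scaling laws \eqref{changeofvariable},'' and your integration of the bounds on $|\mu_s|$ and $|\tfrac{\lambda_s}{\lambda}+1|$ together with $\tfrac{ds}{dt}=\tfrac{1}{\lambda}$ is precisely that. The hesitation in your final paragraph is not a real obstacle: define $T_*$ as the blow-up time $t(0)+\int_0^\infty \lambda(\sigma)\,d\sigma$, then your computation $T_*-t(s)=\ell_\infty e^{-s}(1+O(e^{-\kappa s}))$ and $\lambda(s)=\ell_\infty e^{-s}(1+O(e^{-\kappa s}))$ gives $\tfrac{\lambda(t)}{T_*-t}\to 1$; the extra factor of $T_*$ in the displayed claim and the identification $\ell_\infty=1/T_*$ are cosmetic (both quantities are $1+O(\delta_0\alpha^{3/2})$), so the statement should be read up to such harmless constants.
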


\begin{remark}
Note that this corollary follows directly from Theorem \ref{StabilityTheorem} in view of the scaling laws \eqref{changeofvariable}.
\end{remark}
\subsection{Solutions with compactly supported vorticity and finite energy}
In view of Theorem \ref{StabilityTheorem}, to get finite-time singularity for compactly supported solutions, it suffices to show that there exists $\eps_0\in\mathcal{H}^k$ with small norm so that $F+\eps_0$ is compactly supported. We take $\mathcal{U}^\phi_0$ to be compactly supported. This is not difficult to do since $F$ decays faster than $\eps_0$ needs to. Indeed, let $\chi\in C_c^\infty([0,\infty))$ with $\chi\equiv 1$ on $[0,1]$, $\chi\equiv 0$ on $[2,\infty)$, and $0\leq \chi\leq 1.$ We will let $M>>1$ and $\beta<<1$ be positive constants to be chosen later. Consider
\[\eps_0^{M,\beta}=(\chi(\frac{z}{M})-1)F+\beta\sin(2\theta)\chi((z-3)^2).\] Observe that $\eps_0+F$ is compactly supported. Next, observe that \[\|\eps_0^{M,\beta}\|_{\mathcal{H}^k}\leq \frac{C}{M^{1/4}}+C\beta+C\alpha^2.\] This is just due to the fact that $F=F_*+\alpha^2 g$ and $F_*\approx \alpha z^{-1}$ as $z\rightarrow\infty$ and $|g|_{\mathcal{H}^k}\leq C$, while the $\mathcal{H}^k$ norm is like an $L^2$ norm for large $z$. We also use Lemma \ref{RadialMultiplier}. Note also that \[L_{12}(\eps_0^{M,\beta})(0)=L_{12}(\chi(Mz)-1)(0)+\beta L_{12}(\sin(2\theta)\chi( (z-3)^2))(0).\]
Observe that $a_M:=L_{12}(\chi(Mz)-1)(0)$ satisfies:
\[|a_M|\leq \frac{C}{M}+C\alpha^2,\] for the same reason that $F=F_*+\alpha^2 g$. 
On the other hand, the fixed constant $b=L_{12}(\sin(2\theta)\chi((z-3)^2))(0)>0$. 
Thus we define \[\beta=-\frac{a_M}{b}.\] Then we have:
\[L_{12}(\eps^{M,\beta})(0)=0\qquad \|\eps_0^{M,\beta}\|_{\mathcal{H}^k}\leq C\Big(\frac{1}{M^{1/4}}+\alpha^2+\beta\Big)\leq \frac{C}{M^{1/4}}+C\alpha^2\] if $M$ is large. In particular, if we take $M= \frac{1}{\alpha^8}$ and if $\alpha$ is sufficiently small, $\eps_0^{M,\beta}$ will satisfy the hypothesis of Theorem \ref{StabilityTheorem}. 

\section{Coercivity in $\mathcal{H}^k$} \label{Coercivity}
Recall from \cite{E_Classical} the definition of the following operators.
\begin{definition}\label{Definition_Linearization}
 \[\mathcal{L}_{F_*}(f)= f+y\partial_y f-2 \frac{f}{1+y}-\frac{2y\Gamma(\theta)}{c(1+y)^2}L_{12}(f),\]
\[\mathcal{L}(f)=f+y\partial_y f-2\frac{f}{1+y},\]
$$\mathcal{L}^T_{F_*} f:=\mathcal{L}_{F_*}(f)+\frac{3}{1+y}\sin(2\tet)\pr_\tet f- \frac{\Gamma(\theta)}{c}\frac{2 z^2}{(1+z)^3}L_{12}(\frac{3}{1+y}\sin(2\tet)\pr_\tet f)(0).$$
\end{definition}
$\mathcal{L}_{F_*}$ is the linearization of the fundamental model around $F_*$, which is the leading order of the linearized operator from \cite{E_Classical}. 
The extra term $\frac{\Gamma(\theta)}{c}\frac{2 z^2}{(1+z)^3}L_{12}(\frac{3}{1+y}\sin(2\tet)\pr_\tet f)(0)$ is to ensure that  $L_{12}(\mathcal{L}^T_{F_*}(f))(0)=L_{12}(\mathcal{L}_{F_*}(f))(0)$.

\subsection{Coercivity of $\mathcal{L}_{F_*}^T$ in $\mathcal{H}^k$}
\label{LinearizedEstimates1}
We begin with the following proposition.
\begin{proposition}
There exists an inner-product on $\mathcal{H}^k$ that gives a norm equivalent to the $\mathcal{H}^k$ norm so that
\begin{equation}\label{Coercivity1}(\mathcal{L}_{F_*}^T f, f)_{\mathcal{H}^k}\geq |f|_{\mathcal{H}^k}^2,\end{equation} and \begin{equation}\label{Coercivity1}(\mathcal{L}_{F_*} f, f)_{\mathcal{H}^k}\geq |f|_{\mathcal{H}^k}^2,\end{equation} whenever $f\in\mathcal{H}^k$ and $L_{12}(f)(0)=0$.
\end{proposition}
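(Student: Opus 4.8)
The plan is to build the inner product on $\mathcal{H}^k$ inductively on the order of the derivatives $D_R$ and $D_\theta$, starting from the leading ($L^2$-weighted) piece and adding correction terms at each level that absorb the commutator errors produced by differentiating the operator $\mathcal{L}_{F_*}^T$. The guiding principle is that the one-dimensional model operator $\mathcal{L}(f) = f + y\partial_y f - 2f/(1+y)$ is already coercive in the radial variable: writing $f = y(1+y)^{-2} h$ (or a similar conjugation), the transport part $f + y\partial_y f$ contributes a positive constant times $|f|^2$ after integration by parts on $[0,\infty)$, and the zeroth-order term $-2f/(1+y)$ is a bounded (indeed small relative to the gain, once one is careful about the boundary at $y=0$) perturbation; the nonlocal term $-\tfrac{2y\Gamma(\theta)}{c(1+y)^2}L_{12}(f)$ is killed outright by the hypothesis $L_{12}(f)(0)=0$ together with the fact that $L_{12}$ of the relevant profile at $0$ enters only through $L_{12}(f)(0)$ after one integration (this is exactly why that hypothesis is imposed, and why the extra $\mathcal{L}^T$-correction term was engineered to preserve $L_{12}(\,\cdot\,)(0)$). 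So at the base level $i=0$ one gets $(\mathcal{L}_{F_*}^T f, f)_{L^2(w/\sin^{\eta/2})} \geq c_0 |f|^2$ for a universal $c_0>0$.

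Next I would commute $D_R$ and $D_\theta$ through $\mathcal{L}_{F_*}^T$. Since $\mathcal{L}_{F_*}^T$ is, up to the angular multiplier $\tfrac{3}{1+y}\sin(2\theta)\partial_\theta$ and the two nonlocal corrections, essentially $\mathcal{L}$ acting in $y$ with $\theta$-dependent coefficients, the commutators $[D_R, \mathcal{L}_{F_*}^T]$ and $[D_\theta, \mathcal{L}_{F_*}^T]$ are lower order: $[D_R,\mathcal{L}] = 0$ on the transport part (since $D_R = R\partial_R$ commutes with $z\partial_z$-type scalings) and produces only terms like $\tfrac{y}{(1+y)^2}$ times lower-order $D_R^j$ derivatives from the $-2/(1+y)$ factor, while $[D_\theta, \cdot]$ produces $\sin(2\theta)\cos(2\theta)$-type bounded factors against lower-order $D_\theta$ derivatives. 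The inductive step is then the standard Merle–Raphaël–type device: define the level-$k$ inner product as the level-$(k-1)$ one plus a large multiple $N_k$ times the sum of the top-order $L^2$ terms appearing in \eqref{HsNorm}; the top-order coercivity gain $c_0 N_k |D^k f|^2$ dominates the commutator cross-terms, which are bounded by $C_k N_k^{1/2}|D^k f|\,|f|_{\mathcal{H}^{k-1}} \le \tfrac{c_0}{2}N_k|D^kf|^2 + C_k' N_k |f|_{\mathcal{H}^{k-1}}^2$, and then one chooses $N_k$ large enough that the level-$(k-1)$ coercivity (already proven, with its own constant) beats the leftover $C_k' N_k|f|_{\mathcal{H}^{k-1}}^2$ — no wait, this needs the hierarchy run the other way: one fixes the top constants first and lets the lower-level multipliers be the large ones, i.e. the inner product is $\sum_{i} N_i(\text{level-}i\text{ terms})$ with $N_0 \gg N_1 \gg \cdots \gg N_k = 1$, so that at every level the coercivity from that level's diagonal term controls the off-diagonal commutator contributions to the levels below it. Either bookkeeping works; the point is it is a finite induction and the constants can be chosen independently of $\alpha$ because all the $\alpha$-dependence in $\mathcal{L}_{F_*}^T$ sits in $\Gamma(\theta)=(\sin\theta\cos^2\theta)^{\alpha/3}$, which is uniformly bounded between fixed constants, and in $R=\rho^\alpha$, which only rescales the $D_R$ variable.

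The two mixed-derivative families in \eqref{HsNorm} — the pure $D_R^i$ terms weighted by $w/\sin^{\eta/2}(2\theta)$ and the $D_\theta^i D_R^j$ terms with $i\ge 1$ weighted by $W = w\cdot w_\theta$ — have to be handled with slightly different weights (note $\eta = 99/100 < \gamma$, so the pure-radial angular weight $\sin^{-\eta/2}(2\theta)$ is strictly weaker than $w_\theta = \sin^{-\gamma/2}(2\theta)$); this weight gap is what gives room for the angular derivative $\partial_\theta$ of the coefficient $\tfrac{3}{1+y}\sin(2\theta)$ (and the $\tan\theta$, $1/\cos\theta$ singularities hidden in $\mathcal{R}$) to be absorbed, since each application of $D_\theta = \sin(2\theta)\partial_\theta$ against a coefficient with a $\sin(2\theta)^{-1}$ or $\cos\theta^{-1}$-type singularity is controlled by the extra power of $\sin(2\theta)$ available in $W$ versus the pure-radial weight. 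Finally the second inequality, coercivity of $\mathcal{L}_{F_*}$ itself (not $\mathcal{L}_{F_*}^T$), follows from the first together with the observation that $\mathcal{L}_{F_*}^T = \mathcal{L}_{F_*} + (\text{the angular term } \tfrac{3}{1+y}\sin(2\theta)\partial_\theta f - \text{its } L_{12}\text{-correction})$, and that this extra operator is \emph{skew-adjoint to leading order} in the relevant inner product (the transport-in-$\theta$ piece $\sin(2\theta)\partial_\theta$ integrates by parts against the $w_\theta$ weight to something lower order) — actually cleaner: one proves coercivity of $\mathcal{L}_{F_*}$ by the \emph{same} inductive construction, since the extra angular term in $\mathcal{L}_{F_*}^T$ was only added to make $L_{12}(\cdot)(0)$ invariant and is not needed for the energy estimate; so really one proves \eqref{Coercivity1} for $\mathcal{L}_{F_*}$ first and then checks the extra $\mathcal{L}^T$ term contributes a commutator that is again lower-order and absorbable. \textbf{The main obstacle} I anticipate is the behavior at the boundary $y=0$: the transport term $f + y\partial_y f$ integrated against the weight $w = (1+y)^2/y^2$, which blows up like $y^{-2}$ at the origin, produces a boundary contribution at $y=0$ that must either vanish or have a favorable sign, and this is exactly where the condition $L_{12}(f)(0)=0$ (equivalently $\partial_y\varepsilon(0,\theta)=0$, i.e. quadratic vanishing of $f$ at $y=0$) is used in an essential, not merely cosmetic, way — getting the boundary terms to close, at every derivative level and for both weight families, while keeping all constants $\alpha$-independent, is the technical crux.
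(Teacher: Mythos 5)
Your overall architecture — building the inner product inductively on the derivative count, observing that commutators of $D_R$, $D_\theta$ with the operator are lower order, exploiting the weight gap $\eta<\gamma$ to absorb angular derivatives of the singular coefficients, and choosing a graded hierarchy of constants to close the estimate — does match the paper's proof. The paper likewise performs a nested induction (first on $k$, then on the split $D_\theta^{k-j}D_y^j$), pulls out the $\mathcal L(D_\theta^k f)$ block, and uses integration by parts against $W$ to render the $\frac{3}{1+y}\sin(2\theta)\partial_\theta$ term small (of size $O(\alpha)$), so your ``skew-adjoint to leading order'' intuition is on target, as is your remark that $\mathcal L_{F_*}$ and $\mathcal L_{F_*}^T$ differ only by a term that can be handled by the same machinery in either order.

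However, there is a genuine gap at the foundation of the induction. You claim that the nonlocal term $-\frac{2y\Gamma(\theta)}{c(1+y)^2}L_{12}(f)$ in $\mathcal L_{F_*}$ is ``killed outright'' by the hypothesis $L_{12}(f)(0)=0$. That is not correct: $L_{12}(f)$ is a \emph{function} of $y$ (an integral from $y$ to $\infty$), and fixing its value at $y=0$ does nothing to the values $L_{12}(f)(y)$ for $y>0$ that actually appear in the operator. The nonlocal term is precisely what makes the base-case coercivity nontrivial; it cannot be discarded, it has to be estimated, and the orthogonality condition $L_{12}(f)(0)=0$ enters through a careful spectral/energy argument, not by annihilating the term. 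The paper does not re-derive this: it cites the $\mathcal H^2$ coercivity of $\mathcal L_{F_*}^T$ from \cite{E_Classical} as the base case (``We know that $\mathcal L_{F_*}^T$ is coercive in $\mathcal H^2$ with a suitable inner product whose first term is always $10$'') and only carries out the $\mathcal H^{k-1}\to\mathcal H^k$ step, treating the nonlocal correction as a lower-order perturbation at higher derivative levels. Without that base case your induction has nothing to stand on. Relatedly, the conjugation $f=y(1+y)^{-2}h$ you sketch and the boundary-at-$y=0$ discussion are not where the difficulty lies in the paper's argument: the singular weight $w=(1+y)^2/y^2$ is already baked into the $\mathcal H^k$ norm (so membership in $\mathcal H^k$ encodes the needed vanishing), and the real technical content of the inductive step is the commutator bookkeeping across the two derivative families and the nonlocal term — exactly the piece your proposal elides.
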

 \begin{proof}
To do this, we proceed by induction on $k$. We know that $\mathcal{L}_{F_*}^T$ is coercive in $\mathcal{H}^2$ with a suitable inner product whose first term is always 10. Let us now show how to pass from coercivity on $\mathcal{H}^{k-1}$ to coercivity on $\mathcal{H}^k$. Note that $D_\theta$ commutes with $\mathcal{L}_{F_*}^T$ with the exception of the last term in $\mathcal{L}_{F_*}^T$. By the induction assumption, we assume that
\[(\mathcal{L}_{F_*}^T g, g)_{\mathcal{H}^{k-1}}\geq |g|_{\mathcal{H}^{k-1}}^2,\]
and \[(\mathcal{L}g, g)_{\mathcal{H}^{k-1}}\geq |g|_{\mathcal{H}^{k-1}}^2.\]
We first study
$(D_\theta^k \mathcal{L}_{F_*}^T f, D_\theta^k f W)_{L^2}$. Recall that \[D_\theta^{k}\mathcal{L}_{F_*}^T(f)=\mathcal{L}(D_\theta^k f)+D_\theta^k\Gamma \Big(-\frac{2y}{c(1+y)^2}L_{12}(f)+\frac{1}{c}\frac{2 y^2}{(1+y)^3}L_{12}(\frac{3}{1+y}\sin(2\theta)\partial_\theta f)(0)\Big)\] \[-\frac{3}{1+y}\sin(2\theta)\partial_\theta D_\theta^k f.\]
Using the definition of $W$, it is now easy to see that 
\[(D_\theta^k \mathcal{L}_{F_*}^T f, D_\theta^k f W)_{L^2}\geq \frac{1}{10}|D_\theta ^kf W|_{L^2}-C\alpha |f|_{\mathcal{H}^1}^2.\]
Now let us proceed by induction. Let us assume that for $ 0\leq j\leq k-2$ we have established an estimate of the form:
\[(D_\theta^{k-j}D_y^j \mathcal{L}_{F_*}^T f, D_\theta^{k-j} D_y^j f W)_{L^2}\geq \frac{1}{10}|D_\theta^{k-j}D_y^j  f W|_{L^2}^2- C|f|_{\mathcal{H}^{k-1}}^2,\] then we will show that \[(D_\theta^{k-(j+1)}D_y^{j+1} \mathcal{L}_{F_*}^T f, D_\theta^{k-(j+1)} D_y^{j+1} f W)_{L^2}\geq \frac{1}{100}|D_\theta^{k-(j+1)}D_y^{j+1}  f W|_{L^2}^2- C|f|_{\mathcal{H}^{k-1}}^2-C|D_\theta^{k-j}D_y^j  f W|_{L^2}^2.\]
Once this is established, we will be done by induction on $j$. We first apply $D_{\theta}^{k-(j_1)}$ to $\mathcal{L}_{F_*}^T$ and we get:
 \[D_\theta^{k}\mathcal{L}_{F_*}^T(f)=\mathcal{L}(D_\theta^{k-(j+1)} f)+D_\theta^{k-(j+1)}\Gamma \Big(-\frac{2y}{c(1+y)^2}L_{12}(f)+\frac{1}{c}\frac{2 y^2}{(1+y)^3}L_{12}(\frac{3}{1+y}\sin(2\theta)\partial_\theta f)(0)\Big)\]\[-\frac{3}{1+y}\sin(2\theta)\partial_\theta D_\theta^{k-(j+1)} f=I+II+III.\]
Now we apply $D_y^{j+1}$ to the above expression. First observe that \[(D_y^{j+1}I, D_y^{j+1} D_\theta^{k-(j+1)}f W)_{L^2}\geq \frac{1}{10}|D_\theta^{k-(j+1)}D_y^{j+1}  f W|_{L^2}^2-C|D_\theta^{k-(j+1)}D_y^{j+1}  f W|_{L^2}|f|_{\mathcal{H}^{k-1}}.\] This is because $D_y$ commutes with the derivative term in $\mathcal{L}$ and its commutator with the other terms is lower order. 
The term with $II$ is low order and we leave it to the reader. As for the term III:
\[(D_y^{j+1}III, D_y^{j+1} D_\theta^{k-(j+1)}f W)_{L^2}\leq 100\alpha |D_\theta^{k-(j+1)}D_y^{j+1}  f W|_{L^2}^2\]\[+C|D_\theta^{k-(j+1)}D_y^{j+1}  f W|_{L^2}(|D_\theta^{k-j}D_y^{j}  f W|_{L^2}+|f|_{\mathcal{H}^{k-1}})\] the first term comes from the term where all derivatives fall on $f$ in $III$ and we then integrate by parts using the definition of $W$. The first part of the second term comes when one $D_y$ hits the factor $\frac{3}{1+y}$ and the second part of the second term comes when more than one derivative hits that factor. This shows that we can find a suitable inner product whose norm is equivalent to the $\mathcal{H}^k$ norm, with equivalence constants independent of $\alpha$, so that \eqref{Coercivity1} holds. Note that for the linear estimates $\alpha$ does not need to be taken to be smaller as $k$ is taken larger. Note also that we have treated the term $\frac{\Gamma(\theta)}{c}\frac{2 y^2}{(1+y)^3}L_{12}(\frac{3}{1+y}\sin(2\tet)\pr_\tet f)(0)$ perturbatively (as a purely "bad" term) so we also have that 
\[(\mathcal{L}_{F_*}^T(f)+\frac{\Gamma(\theta)}{c}\frac{2 y^2}{(1+y)^3}L_{12}(\frac{3}{1+y}\sin(2\tet)\pr_\tet f)(0),f)_{\mathcal{H}^k}\geq |f|_{\mathcal{H}^k}^2.\] 
We also deduce for the same reason that
\[(\mathcal{L}_{F_*}(f),f)_{\mathcal{H}^k}\geq |f|_{\mathcal{H}^k}^2.\] 
\end{proof}

\subsection{Coercivity of $\mathcal{M}_F$ in $\mathcal{H}^k$}\label{MF}
Our goal in this section is to derive coercivity estimates for $\mathcal{M}_F$ given what we know about $\mathcal{L}_{F_*}^T$ from the previous section. The argument will be merely perturbative.
Indeed, the self-similar solution $F$ is a perturbation of $F_*=2\alpha\frac{\Gamma(\theta)}{c}\frac{y}{(1+y)^2}$
$$F=F_*+\alpha^2 g.$$
Hence, one can also write $\mathcal{M}_F$ as
\begin{align}
\mathcal{M}_F&=\mathcal{L}^T_{F_*}+\frac{2y^2\Gamma}{c(1+y)^3}L_{12}(\frac{3}{1+y}\sin(2\theta)\partial_\theta \varepsilon)(0))+\sqrt{\alpha}\tilde{L}
\end{align}
and $$\tilde{L}f= - \frac{1}{\sqrt{\alpha}}\Big[ \alpha V(F_*)y\partial_y\varepsilon+U(\Phi_{\varepsilon})\partial_\theta F_* +\alpha V(\Phi_\varepsilon)y\partial_y F_*+l.o.t.\Big].$$ We refer the reader to \eqref{ApproximationF1}-\eqref{ApproximationF3} to see how the computation above is done. In the above, "l.o.t." refers to lower order terms in $\alpha$. These are the terms coming from $g$ and their size is made precise in the following proposition.

\begin{proposition}\label{proposition:lowerorderL}
\[|(\tilde L(g),g)_{\mathcal{H}^k}|\leq C|g|_{\mathcal{H}^k}^2.\]
\end{proposition}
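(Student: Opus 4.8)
The statement to prove is that $|(\tilde L g, g)_{\mathcal{H}^k}| \leq C|g|_{\mathcal{H}^k}^2$, where $\tilde L$ collects the terms of size $\sqrt{\alpha}$ (relative to the normalization) in the decomposition of $\mathcal{M}_F$. Here the key point is \emph{not} to gain any smallness — the smallness is already extracted in the $\sqrt{\alpha}$ prefactor in $\mathcal{M}_F = \mathcal{L}^T_{F_*} + (\text{correction})+\sqrt\alpha\tilde L$ — but merely to show that $\tilde L$ is a \emph{bounded} operator on $\mathcal{H}^k$ with a bound independent of $\alpha$ (after the rescaling). The plan is to go term by term through
\[
\tilde L g = -\frac{1}{\sqrt{\alpha}}\Big[\alpha V(F_*)\,y\partial_y g + U(\Phi_g)\,\partial_\theta F_* + \alpha V(\Phi_g)\,y\partial_y F_* + \text{l.o.t.}\Big],
\]
and bound the $\mathcal{H}^k$ inner product of each against $g$.

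First I would record the relevant structural facts. From \eqref{ApproximationF1}--\eqref{ApproximationF3}, $U(\Phi_{F_*}),V(\Phi_{F_*}),\mathcal{R}(\Phi_{F_*})$ are of the form $(\text{angular function})\cdot\frac{1}{1+y}+O(\alpha)$, and crucially $F_* = \frac{\Gamma}{c}\frac{4\alpha y}{(1+y)^2}$, so $\partial_\theta F_*$ and $y\partial_y F_*$ both carry a factor of $\alpha$; this is the source of the $\frac{1}{\sqrt\alpha}\cdot\alpha = \sqrt\alpha$ bookkeeping and, after the prefactor is removed, the coefficients multiplying $g$ (or $\Phi_g$) are $O(1)$ smooth functions with the right decay and the right vanishing at $\theta = 0,\pi/2$. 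For the terms linear in $g$ itself, e.g. $V(F_*)\,y\partial_y g = D_y g \cdot (\text{bounded angular/radial coefficient})$, boundedness on $\mathcal{H}^k$ follows because the $\mathcal{H}^k$ norm \eqref{HsNorm} is built precisely out of the commuting operators $D_R = D_y$ and $D_\theta$ with weights $w, W$: one commutes $D_y^i D_\theta^j$ past the smooth bounded coefficient (the commutators being lower order and controlled by the product rules in Appendix \ref{ProductRules}), and then Cauchy--Schwarz with the weights closes the estimate. For the terms involving $\Phi_g$ — namely $U(\Phi_g)\partial_\theta F_*$, $\alpha V(\Phi_g)y\partial_y F_*$, and the $\mathcal{R}(\Phi_g)F_*$-type l.o.t. — I would invoke the elliptic estimate quoted before Theorem \ref{Elliptic}, namely $\Phi_g = \frac{1}{4\alpha}\sin(2\theta)L_{12}(g) + \bar\Phi_g$ with $|\partial_\theta^2\bar\Phi_g|_{\mathcal{H}^k} + \alpha|D_R\partial_\theta\bar\Phi_g|_{\mathcal{H}^k} + \alpha^2|D_R^2\bar\Phi_g|_{\mathcal{H}^k}\leq C|g|_{\mathcal{H}^k}$, together with \eqref{Approximation1}--\eqref{Approximation3}: this means $U(\Phi_g), V(\Phi_g), \mathcal{R}(\Phi_g)$ are each $\frac{1}{\alpha}(\text{angular fn})L_{12}(g) + O(1)$, and the singular $\frac1\alpha L_{12}(g)$ piece is killed against $\partial_\theta F_* = O(\alpha)$, leaving an $O(1)$ operator in $g$. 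One also needs the boundedness of $L_{12}$ as a map into $L^2([0,\infty))$ (and the fact that it only depends on the trace at a fixed $y$-slice through the kernel), and the radial multiplier lemma (Lemma \ref{RadialMultiplier}) to handle the $w$-weight when pairing $L_{12}(g)$-terms.

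The main obstacle — and the place where care is genuinely required — is the angular endpoint behavior near $\theta = 0$ and $\theta = \pi/2$, i.e. making sure the weights $w_\theta = \sin(2\theta)^{-\gamma/2}$ in $W$ and $\sin^{-\eta/2}(2\theta)$ in the first block of \eqref{HsNorm} are not made worse by the coefficients of $\tilde L$. The coefficients $U(\Phi_{F_*})\sim \sin(2\theta)/(1+y)$, $\partial_\theta F_* \sim \partial_\theta\Gamma \cdot (\cdots)$ with $\Gamma = (\sin\theta\cos^2\theta)^{\alpha/3}$, and $\tan(\theta)$ factors from $V$ and $\mathcal{R}$ all interact delicately with these singular weights; one has to verify that each term in $\tilde L g$, weighted by $W$ (or $w/\sin^{\eta/2}$), is still controlled by $|g|_{\mathcal{H}^k}$, using that $\partial_\theta$ acting on $\Gamma$ produces a factor like $\cot(2\theta)\cdot\alpha$ which is integrable against the weights since $\gamma = 1+\alpha/10 < 2$ and $\eta = 99/100 < 1$ — this is exactly the kind of computation the authors defer, and it is where the precise choice of the parameters $\eta,\gamma$ and the structure of $\Gamma$ is used. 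I would organize this by first handling the top-order part (all $k$ derivatives hitting $g$ or $\Phi_g$), where the coefficient is undifferentiated and one integrates by parts exactly as in the $\mathcal{L}^T_{F_*}$ coercivity proof to absorb the derivative-of-weight terms, and then the lower-order part where some derivatives hit the coefficients, which is strictly easier because those contributions land in $|g|_{\mathcal{H}^{k-1}}$-type norms. Since no smallness is claimed, the constant $C$ is allowed to depend on $k$, which removes any subtlety about tracking $\alpha$-powers in the commutators.
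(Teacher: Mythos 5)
This is essentially the paper's own argument: the paper's proof is a one-liner that writes out $\tilde L$ from the decomposition $\mathcal{M}_F = \mathcal{L}^T_{F_*} + (\text{correction}) + \sqrt{\alpha}\tilde L$ and closes by citing the product rules (Propositions \ref{P1}--\ref{P2}), the transport estimates (Propositions \ref{T1}--\ref{T2}), and the elliptic estimate (Theorem \ref{Elliptic}) --- the same decomposition and tools you invoke. One small correction to your opening framing: $\tilde L$ is not literally a bounded operator on $\mathcal{H}^k$ (it contains a first-order transport term $\alpha V(F_*)\,y\partial_y g$, and the pairing $(\tilde L g,g)_{\mathcal{H}^k}$ is controlled by integration by parts via Propositions \ref{T1}--\ref{T2}, not by operator boundedness); and the $\alpha$-independence of $C$ is not a triviality that ``removes any subtlety about tracking $\alpha$-powers'' --- as the Remark immediately following the Proposition emphasizes, it is an exact cancellation between the $O(\alpha)$ size of the coefficients of $\sqrt{\alpha}\tilde L$ (the explicit $\alpha$ in $F_*$, together with the second $\alpha$ produced when $\partial_\theta$ hits $\Gamma=(\sin\theta\cos^2\theta)^{\alpha/3}$) and the $1/\sqrt{\gamma-1}\sim\alpha^{-1/2}$ loss built into Propositions \ref{P1}--\ref{P2}, after dividing by the $\sqrt{\alpha}$ normalization. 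You do, however, track these factors correctly in the body of your plan (the $\partial_\theta\Gamma\sim\alpha$ observation, the $L_{12}(g)/\alpha$ splitting of $\Phi_g$, the $\gamma-1=\alpha/10$ weight arithmetic), so the argument as you lay it out closes.
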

\begin{remark}
This is because of the $\sqrt{\alpha}$ loss in the product rules of Section \ref{ProductRules}. 
\end{remark}
\begin{proof}
This essentially follows from the computations from the preceding subsection. Indeed, using the notation of \cite{E_Classical}, $F=F_*+\alpha^2 g$ with $|g|_{\mathcal{H}^k}\leq C$ and $F_*$ is of order $\alpha$. Since $g$ is small, we can essentially discard the linear terms containing $g$ and focus on the rest. 
Now, by definition, we see that \[\mathcal{M}_F(\varepsilon)=\mathcal{L}_{F_*}^T(\varepsilon)- \frac{y^2\Gamma}{c(1+y)^3}L_{12}(\frac{3}{1+y}\sin(2\theta)\partial_\theta \varepsilon)(0))\] \[ +\alpha V(F_*)y\partial_y\varepsilon+U(\Phi_{\varepsilon})\partial_\theta F_* +\alpha V(\Phi_\varepsilon)y\partial_y F_*+l.o.t.\] 
The result now follows using Propositions \ref{P1}, \ref{P2}, \ref{T1}, and \ref{T2} as well as Theorem \ref{Elliptic}.
\end{proof}

Hence, the following proposition will follow from the coercivity of $\mathcal{L}_{F_*}^T$ in $\mathcal{H}^k$.
\begin{proposition}\label{proposition:coercivityM}
Let $\varepsilon\in \mathcal{H}^k$ satisfy that $L_{12}(\varepsilon)(0)=0$. There exists a constant $c$ depending only on $k$ so that if $\alpha$ is sufficiently small, we have that \begin{equation} (\mathcal{M}_F(\varepsilon),\varepsilon)_{\mathcal{H}^k}\geq c|\varepsilon|_{\mathcal{H}^k}^2.\end{equation}
\end{proposition}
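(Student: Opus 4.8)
The plan is to derive the coercivity of $\mathcal{M}_F$ purely perturbatively from the coercivity of $\mathcal{L}_{F_*}^T$ established in the previous subsection. Using the decomposition displayed just above the statement, namely
\[\mathcal{M}_F(\varepsilon)=\mathcal{L}_{F_*}^T(\varepsilon)-\frac{y^2\Gamma}{c(1+y)^3}L_{12}\Big(\frac{3}{1+y}\sin(2\theta)\partial_\theta\varepsilon\Big)(0)+\sqrt{\alpha}\,\tilde{L}(\varepsilon),\]
I would take the inner product of both sides with $\varepsilon$ in $\mathcal{H}^k$ (using the particular inner product constructed in Section \ref{Coercivity} that makes $\mathcal{L}_{F_*}^T$ coercive). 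The first term contributes $\geq |\varepsilon|_{\mathcal{H}^k}^2$ by \eqref{Coercivity1}, provided we remember that this bound already accounts for the middle term having been treated ``perturbatively as a purely bad term'' in the proof of the previous proposition — so in fact one has $(\mathcal{L}_{F_*}^T(\varepsilon)-\frac{y^2\Gamma}{c(1+y)^3}L_{12}(\frac{3}{1+y}\sin(2\theta)\partial_\theta\varepsilon)(0),\varepsilon)_{\mathcal{H}^k}\geq |\varepsilon|_{\mathcal{H}^k}^2$ directly from the last displayed inequality in that proof. It remains only to control $\sqrt{\alpha}\,|(\tilde{L}(\varepsilon),\varepsilon)_{\mathcal{H}^k}|$.

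For that term I would invoke Proposition \ref{proposition:lowerorderL}, which gives $|(\tilde{L}(\varepsilon),\varepsilon)_{\mathcal{H}^k}|\leq C|\varepsilon|_{\mathcal{H}^k}^2$ with $C$ depending on $k$ but not on $\alpha$; here one uses the elliptic bound of Theorem \ref{Elliptic} to pass from $\Phi_\varepsilon$ to $\varepsilon$, the product rules of Section \ref{ProductRules} (Propositions \ref{P1}, \ref{P2}, \ref{T1}, \ref{T2}) to estimate $U(\Phi_\varepsilon)\partial_\theta F_*$, $\alpha V(\Phi_\varepsilon)y\partial_y F_*$, and $\alpha V(F_*)y\partial_y\varepsilon$, and the smallness of $g$ to absorb the lower-order terms. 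Combining, one gets
\[(\mathcal{M}_F(\varepsilon),\varepsilon)_{\mathcal{H}^k}\geq |\varepsilon|_{\mathcal{H}^k}^2-C\sqrt{\alpha}\,|\varepsilon|_{\mathcal{H}^k}^2\geq \tfrac12|\varepsilon|_{\mathcal{H}^k}^2\]
once $\alpha$ is chosen small enough that $C\sqrt{\alpha}\leq \tfrac12$, which gives the claim with $c=\tfrac12$ (or any $c<1$).

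The one point requiring care — and the main obstacle — is bookkeeping of which terms in $\mathcal{M}_F$ land where: the constant $\frac{y^2\Gamma}{c(1+y)^3}L_{12}(\cdots)(0)$ term carries a $\frac{1}{\alpha}$-type blow-up if one is careless (it originates from the $O(1/\alpha)$ approximations \eqref{Approximation1}–\eqref{Approximation3} for $U(\Phi_\varepsilon),V(\Phi_\varepsilon),\mathcal{R}(\Phi_\varepsilon)$), so one must be sure that precisely this singular part has been packaged into $\mathcal{L}_{F_*}^T$ via its definition and the compensating last term, leaving only $O(1)$ and $O(\sqrt\alpha)$ remainders in $\tilde{L}$. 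Since the previous subsection already verified that $\mathcal{L}_{F_*}^T$ (even with the bad $L_{12}$ correction added back) is coercive with $\alpha$-independent equivalence constants, and Proposition \ref{proposition:lowerorderL} already quantifies the size of $\tilde{L}$, no genuinely new estimate is needed; the proof is a two-line combination of those two facts together with the choice of $\alpha$ small.
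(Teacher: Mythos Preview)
Your proposal is correct and follows essentially the same perturbative route as the paper: decompose $\mathcal{M}_F$ as $\mathcal{L}_{F_*}^T$ plus the $L_{12}$-correction plus $\sqrt{\alpha}\,\tilde L$, use the coercivity from Section~\ref{LinearizedEstimates1} (where the correction was already absorbed as a ``bad'' term), and then invoke Proposition~\ref{proposition:lowerorderL} to swallow $\sqrt{\alpha}\,\tilde L$ for $\alpha$ small. The paper itself says only that the result ``will follow from the coercivity of $\mathcal{L}_{F_*}^T$ in $\mathcal{H}^k$,'' so you have in fact spelled out the argument in more detail than the authors do.
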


\subsection{Coercivity of $\mathcal{M}_F^\phi$ in $\mathcal{H}^k$}\label{MPhi}
Consider $\mathcal{M}_F^\phi:$ 
\[\mathcal{M}^\phi_F f=\mathcal{S}_\delta(f)+U(\Phi_F)\pr_\tet f +V(\Phi_F)\alpha y\pr_y f+(\mathcal{R}(\Phi_F)+V(\Phi_F))f.\] In view of \eqref{ApproximationF1}-\eqref{ApproximationF3}, this gives:
\[\mathcal{M}^\phi_F (f)=y+y\partial_y f-\frac{3}{1+y}\sin(2\theta)\partial_\theta f+\frac{4-6\sin^2(\theta)}{1+y}f+l.o.t.,\]
\[=y+y\partial_y f-\frac{3}{1+y}\sin(2\theta)\partial_\theta f-\frac{2}{1+y}f+\frac{6\cos^2(\theta)}{1+y}f+l.o.t.\]
\[=\mathcal{L}_{F_*}^T+\frac{6\cos^2(\theta)}{1+y}f+l.o.t.\]
where the lower order terms are coming from the $g$ term in the expansion $F=F_*+\alpha^2 g$. It is then easy to see that since the extra $6\cos^2(\theta)$ term above has the right sign and we have that to leading order $\mathcal{M}_F^\phi$ is "more positive" than $\mathcal{L}_{F_*}^T$. It is then easy to show the following proposition. 

\begin{proposition}\label{proposition:coercivityMphi} 
For all $f\in\mathcal{H}^k$ we have that
\begin{equation}\label{Mphi1} (\mathcal{M}^\phi_F(f),f)_{\mathcal{H}^k_{U_0}}\geq c|f|_{\mathcal{H}^k}^2,\end{equation} where $(\cdot,\cdot)_{\mathcal{H}^k_{U_0}}$ is an inner product on $\mathcal{H}^k$ that gives rise to a norm equivalent to the $\mathcal{H}^k$ norm. 
\end{proposition}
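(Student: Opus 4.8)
The plan is to mimic the perturbative argument used for $\mathcal{M}_F$ in Proposition \ref{proposition:coercivityM}, exploiting the decomposition already recorded above:
\[
\mathcal{M}^\phi_F(f) = \mathcal{L}_{F_*}^T(f) + \frac{6\cos^2(\theta)}{1+y}f + (\text{l.o.t.}),
\]
where the lower order terms carry a factor of $\alpha$ (or $\sqrt{\alpha}$) and come from the $\alpha^2 g$ piece of $F = F_* + \alpha^2 g$. The key difference from the $\mathcal{M}_F$ case is that $\mathcal{M}^\phi_F$ acts on the swirl variable $\mathcal{U}^\phi$, for which no orthogonality condition (no analogue of $L_{12}(\eps)(0)=0$) is imposed. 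Correspondingly we will not use the inner product from Section \ref{LinearizedEstimates1} directly; instead, as indicated in the statement, we build a \emph{new} inner product $(\cdot,\cdot)_{\mathcal{H}^k_{U_0}}$, inductively in $k$ exactly as in the proof of the first proposition in Section \ref{LinearizedEstimates1}, but adapted so that the extra positive term is retained rather than discarded.

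First I would reexamine the base case $k$ small (say $k=2$): here the coercivity of $\mathcal{L}_{F_*}^T$ in $\mathcal{H}^2$ with a suitable inner product (first term weight $10$) is already available, and the additional term $\frac{6\cos^2(\theta)}{1+y}f$ contributes $\int \frac{6\cos^2(\theta)}{1+y}|fW|^2$-type quantities, which are manifestly nonnegative; crucially this term has \emph{no} derivative and is bounded, so it only helps (or at worst is harmless). Thus $(\mathcal{M}^\phi_F f, f)_{\mathcal{H}^2_{U_0}} \geq c|f|_{\mathcal{H}^2}^2$ with the same inner product (or a mild modification). Then I would run the induction on $k$ precisely as before: apply $D_\theta^{k-j}D_y^j$ to $\mathcal{M}^\phi_F$, note that $D_\theta$ and $D_y$ commute with the transport part $\mathcal{L}$ up to lower-order commutators, that the term $\frac{3}{1+y}\sin(2\theta)\partial_\theta$ produces at most a $100\alpha$-loss (absorbed for $\alpha$ small) plus genuinely lower-order pieces, and that the new $\frac{6\cos^2(\theta)}{1+y}$ term and all commutators generated from it are order-$1$, bounded, derivative-free multipliers whose "bad" parts are controlled by $C|f|_{\mathcal{H}^{k-1}}^2$ and therefore absorbable into the inductive hypothesis after a suitable choice of the top-level inner-product constant. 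Finally, the genuine l.o.t.\ (the $g$-terms) are handled by the $\sqrt\alpha$-small product estimates of Section \ref{ProductRules} together with Theorem \ref{Elliptic} for the elliptic control of $\Phi_\eps$ in terms of $\eps$ — exactly as invoked in the proof of Proposition \ref{proposition:lowerorderL} — and hence contribute at most $C\sqrt{\alpha}|f|_{\mathcal{H}^k}^2$.

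The main obstacle, modest as it is, is bookkeeping the commutators of $D_\theta^{k-j}D_y^j$ with the $\theta$-dependent bounded coefficients ($\cos^2(\theta)$, $\sin(2\theta)$, $\tan(\theta)$-type factors from $\mathcal{R}$ and $V$) so that every "non-sign-definite" term is genuinely of lower weight than the top-order quantity being estimated, allowing the standard absorption trick (choose the inner-product constant at level $k$ large relative to the constants appearing at levels $\leq k-1$, then $\alpha$ small). Since the structure is identical to that already carried out for $\mathcal{L}_{F_*}^T$ and $\mathcal{M}_F$, and since here the perturbation \emph{improves} positivity rather than degrading it, no new analytic input is needed beyond what is cited; the proof is, as claimed, "easy" once the scheme of Section \ref{LinearizedEstimates1} is in place.
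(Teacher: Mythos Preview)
Your proposal is correct and follows essentially the same approach as the paper, which simply records the decomposition $\mathcal{M}^\phi_F = \mathcal{L}_{F_*}^T + \frac{6\cos^2(\theta)}{1+y} + \text{l.o.t.}$, observes that the sign-definite extra term makes $\mathcal{M}_F^\phi$ ``more positive'' than $\mathcal{L}_{F_*}^T$, and then declares the result easy; your inductive inner-product construction mirroring Section~\ref{LinearizedEstimates1} is exactly what is implicitly intended. One minor slip: the l.o.t.\ here involve only $\Phi_F$ (specifically the $\alpha^2 g$ contribution), not $\Phi_\eps$, so Theorem~\ref{Elliptic} is not actually invoked---the smallness of these terms comes directly from the known $\mathcal{H}^k$ bounds on $g$ and the product/transport estimates.
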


\section{The bootstrap regime} \label{Bootstrap}

We will define first in which sense the solution is initial close to the self-similar profile. 

\begin{definition}[Initial closeness]\label{definition:ini} 
Let $\delta>0$ small enough, $s_0\gg 1$, and $W_0,~~\Up_0\in \mathcal{H}^k$. We say that $(W_0,\Up_0)$ is initially close to the blow-up profile $(F,0)$ if there exists $\lambda_0>0$ and $\mu_0>0$ such that the following properties are verified. In the variables $(y,s)$ one has:
\begin{align}
W_0(y)=F+\eps_0,~~\mathcal{U}^\phi_0
\end{align}
and the remainder and the parameters satisfy:
\begin{itemize}
\item[(i)] \emph{Initial values of the modulation parameters:}
\begin{align} \label{parametersini}
\frac 12 e^{-\frac{s_0}{2}}< \lambda_0 < 2 e^{-\frac{s_0}{2}}, \ \ \frac 12<\mu_0 < 2
\end{align}
\item[(ii)] \emph{Initial smallness:}
\begin{align} \label{qini}
\| W_0\|_{\mathcal{H}^2}^2+\|\mathcal{U}^\phi_0\|_{\mathcal{H}^2}^2+\|\pr_\tet\Up_0\|_{\mathcal{H}^{k}}^2+\|\tan(\tet)\Up_0\|_{\mathcal{H}^{k}}< e^{-\frac{s_0}{8}}, 
\end{align}
\end{itemize}
\end{definition}

We are going to prove that solutions initially close to the self-similar profile in the sense of Definition \ref{definition:ini} will stay close to this self-similar profile in the following sense.

\begin{definition}[Trapped solutions] \label{definition:trap}

Let $K\gg 1$. We say that a solution $w$ is trapped on $[s_0,s^*]$ if it satisfies the properties of Definition \ref{definition:ini} at time $s_0$, and if it can be decomposed as  
$$W=F+\eps$$
 for all $s\in [s_0,s^*]$ with:
\begin{itemize}
\item[(i)] \emph{Values of the modulation parameters:}
\begin{align} \label{parameterstrap}
\frac 1K e^{-\frac{s}{8}}< \lambda(s) < K e^{-\frac{s}{8}}, \ \ \frac 1K <\mu(s) < K.
\end{align}
\item[(ii)] \emph{Smallness of the remainder:}
\begin{align} \label{etrap}
\| \eps\|_{\mathcal{H}^k}+\|\Up\|_{\mathcal{H}^{k+1}}+\|\pr_\tet\Up\|_{\mathcal{H}^{k}}+\|\tan(\tet)\Up\|_{\mathcal{H}^{k}}< Ke^{-\frac s8},
\end{align}
\end{itemize}
\end{definition}

\begin{proposition} \label{proposition:bootstrap}
There exist universal constants $K,s_0^*\gg 1$ such that the following holds for any $s_0\geq s_0^*$. All solutions $w$ initially close to the self-similar profile in the sense of Definition \ref{definition:ini} are trapped on $[s_0,+\infty)$ in the sense of Definition \ref{definition:trap}.
\end{proposition}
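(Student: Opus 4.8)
The proof of Proposition~\ref{proposition:bootstrap} is a standard bootstrap (continuity) argument: one assumes that a solution is trapped on a maximal interval $[s_0,s^*)$ in the sense of Definition~\ref{definition:trap}, and then shows that on a slightly smaller time scale all the trapping bounds are in fact improved, which forces $s^*=+\infty$. Concretely, the plan is to first record the local-in-time well-posedness of \eqref{Equationepsilon} in $\mathcal{H}^k$ (together with the elliptic solvability of the third equation from Theorem~\ref{Elliptic}), so that the set of times on which \eqref{parameterstrap}--\eqref{etrap} hold is relatively open; this reduces matters to improving the constants.

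\medskip

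\textbf{Step 1: Closing the modulation equations.} The first task is to show that the conditions $L_{12}(\eps)(0)=0$ and $\pr_y\eps(0,\theta)=0$ (and the analogous condition $\pr_y\Up(0,\theta)=0$) can be maintained by an appropriate choice of $\mu(s),\lambda(s)$, and to derive the "laws" for $\mu_s$ and $\tfrac{\lambda_s}{\lambda}+1$ (this is the content of Section~\ref{DerivationOfTheLaws}). Projecting the $\eps$-equation in \eqref{Equationepsilon} onto the relevant one-dimensional subspaces and using that $E=-\tfrac{\mu_s}{\mu}y\pr_y F+(\tfrac{\lambda_s}{\lambda}+1)\mathcal{S}_\delta(F)$ is built precisely from the two modulation directions, one obtains a $2\times 2$ linear system for $(\mu_s,\tfrac{\lambda_s}{\lambda}+1)$ with a nondegenerate (for $\alpha$ small) coefficient matrix, whose right-hand side is controlled by $\mathcal{E}(\eps,\Up)$ plus quadratic terms. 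This yields $|\mu_s|+|\tfrac{\lambda_s}{\lambda}+1|\lesssim \mathcal{E}(\eps,\Up)+\mathcal{E}(\eps,\Up)^2$, which on the trapped regime is $\lesssim Ke^{-s/8}$; integrating gives the improved parameter bounds in \eqref{parameterstrap} (with $K$ replaced by, say, $K/2$), using the initial bounds \eqref{parametersini} and choosing $s_0$ large.

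\medskip

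\textbf{Step 2: The energy estimate.} The core is to run the energy functional $\bar{\mathcal{E}}$ — an $\alpha$-uniformly equivalent version of $\mathcal{E}(\eps,\Up)^2$ built from the four inner products of Sections~\ref{Coercivity} and \ref{SwirlParts} — and show $\tfrac{d}{ds}\bar{\mathcal{E}}\leq -c\bar{\mathcal{E}}+C\bar{\mathcal{E}}^{3/2}$. Differentiating, the linear terms $\mathcal{M}_F\eps$, $\mathcal{M}^\phi_F\Up$ and their $\pr_\theta$- and $\tan(\theta)$-variants produce the damping $-c\bar{\mathcal{E}}$ by the coercivity Propositions~\ref{proposition:coercivityM} and \ref{proposition:coercivityMphi} (this is where $L_{12}(\eps)(0)=0$ is used and why it must be propagated); the transport terms $\tfrac{\mu_s}{\mu}y\pr_y$ and $(\tfrac{\lambda_s}{\lambda}+1)\mathcal{S}_\delta$ are controlled because their coefficients are $O(\bar{\mathcal{E}}^{1/2})$ by Step~1, contributing to the $C\bar{\mathcal{E}}^{3/2}$ term after integrating by parts (the $y\pr_y$ and $\mathcal{S}_\delta$ operators being essentially skew up to bounded commutators in the weighted norm); the error $E$ contributes nothing beyond what was already absorbed into the modulation system; and the nonlinearities $N_1,N_2,N_3$ — all morally of the form $\eps\,\Phi_\eps$ or $\Up\,\pr\Up$, and quadratically vanishing near $y=0$ thanks to $\Phi_\eps(0,\theta)=0$ — are estimated by the product rules of Appendix~\ref{ProductRules} together with the elliptic bounds of Section~\ref{EllipticEstimates}, giving $\lesssim\bar{\mathcal{E}}^{3/2}$. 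A Gronwall argument then yields $\bar{\mathcal{E}}(s)\leq \bar{\mathcal{E}}(s_0)e^{-c(s-s_0)/2}$ as long as $\bar{\mathcal{E}}$ stays small, and combined with \eqref{qini} and $s_0$ large this improves \eqref{etrap}.

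\medskip

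\textbf{Step 3: Conclusion.} The improved bounds from Steps~1--2 are strictly stronger than those defining the trapped regime, so by continuity $s^*$ cannot be finite, hence $s^*=+\infty$. The main obstacle is unquestionably Step~2, and within it the coupling between the swirl $\Up$ and the axial vorticity $\eps$: the term $N_3(\Up)=2\Up(\tan(\theta)\alpha y\pr_y+\pr_\theta)\Up$ feeds into the $\eps$-equation, and the operator $\mathcal{M}^\phi_F$ governing $\Up$ must be shown to be no more unstable than $\mathcal{M}_F$ — which, as the introduction stresses, hinges on the favorable sign of the extra $\tfrac{6\cos^2(\theta)}{1+y}$ term in Proposition~\ref{proposition:coercivityMphi} and on a delicate matching of constants. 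Making the whole scheme $\alpha$-uniform (so that $c,C$ do not degenerate as $\alpha\to 0$, which is what allows the threshold $\delta_0\alpha^{3/2}$ in Theorem~\ref{StabilityTheorem}) is the other delicate bookkeeping point, handled by tracking powers of $\alpha$ through the product and elliptic estimates.
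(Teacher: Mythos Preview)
Your outline matches the paper's strategy (Sections~\ref{DerivationOfTheLaws}--\ref{Final}) and is correct in its architecture. Two places where the sketch understates what actually has to be done:

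\medskip

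\emph{The $\alpha$-dependence.} The modulation bound in Step~1 is $\big|\tfrac{\lambda_s}{\lambda}+1\big|\lesssim\tfrac{1}{\alpha}\|\eps\|_{\mathcal{H}^2}$ (Proposition~\ref{proposition:derivationoflaw}), not $\lesssim\mathcal{E}$; and the transport pairings are not ``essentially skew'' in $\mathcal{H}^k$ but satisfy $|(y\partial_y\eps,\eps)_{\mathcal{H}^k}|\leq C\alpha^{-1/2}|\eps|_{\mathcal{H}^k}^2$ (this is the $\tfrac{1}{\sqrt{\gamma-1}}$ loss in the product rules of Appendix~\ref{ProductRules}). The outcome is $\tfrac{d}{ds}\bar{\mathcal{E}}\leq -c\,\bar{\mathcal{E}}+C\alpha^{-3/2}\bar{\mathcal{E}}^{3/2}$: the nonlinear constant \emph{does} degenerate as $\alpha\to 0$, and that degeneration is exactly what produces the $\delta_0\alpha^{3/2}$ threshold in Theorem~\ref{StabilityTheorem}, contrary to your phrasing ``$c,C$ do not degenerate.''

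\medskip

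\emph{The error term.} Your claim that ``$E$ contributes nothing beyond what was already absorbed into the modulation system'' hides a genuine step. A crude bound gives $(E,\eps)_{\mathcal{H}^k}=O\big(\big|\tfrac{\lambda_s}{\lambda}+1\big|\cdot|F|_{\mathcal{H}^k}\cdot|\eps|_{\mathcal{H}^k}\big)=O(|\eps|_{\mathcal{H}^k}^2)$ with an $O(1)$ constant, which competes head-on with the coercivity. The paper instead adds and subtracts the correction $\tfrac{\Gamma}{c}\tfrac{2y^2}{(1+y)^3}L_{12}\big(\tfrac{3\sin(2\theta)}{1+y}\partial_\theta\eps\big)(0)$ simultaneously to $\mathcal{M}_F\eps$ and to $E$; coercivity applies to the shifted operator (still Proposition~\ref{proposition:coercivityM}), while in the shifted error the leading piece $(\tfrac{\lambda_s}{\lambda}+1)(F_*-y\partial_yF_*)$ combines with the correction into $\tfrac{4\Gamma y^2}{c(1+y)^3}\big[\alpha(\tfrac{\lambda_s}{\lambda}+1)-L_{12}(\tfrac{3\sin(2\theta)}{1+y}\partial_\theta\eps)(0)\big]$, and the refined bound \eqref{goodboundonlambda} then supplies an extra factor of $\sqrt{\alpha}$. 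Without this cancellation the energy estimate does not close.
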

Define for $\delta>0$ small enough:
\begin{align}\label{defenergy}
\mathcal{E}(s)=\| \eps\|_{\mathcal{H}^k}+\|\Up\|_{\mathcal{H}^{k+1}}+\|\pr_\tet\Up\|_{\mathcal{H}^{k}}+\|\tan(\tet)\Up\|_{\mathcal{H}^{k}}.
\end{align}

The proof of the proposition will be done later by using energy estimates. Before this we will derive that "law" that $\mu$ and $\lambda$ will satisfy.

\section{Derivation of the laws} \label{DerivationOfTheLaws}
In this section, we derive equations for $\mu$ and $\lambda$ that allow us to propagate $\partial_y \varepsilon(0,\theta,s)=L_{12}(\varepsilon)(0,s)=0$ for all $\theta$ and $s>0$. We assume that these hold at $s=0$. Do derive these laws, we first apply $L_{12}$ to the first equation of \eqref{Equationepsilon} and evaluate at $0$. It is helpful to observe the following facts: whenever $f(0,\theta)=0$ for all\footnote{Essentially all the functions we deal with have this property.}  $\theta$, we have \[L_{12}(y\partial_y f)(0)=0.\]

\begin{proposition}\label{proposition:derivationoflaw}
To keep $\pr_y\Up(0,\tet,s)=0$, $L_{12}(\varepsilon)(0,s)=0$ and $\partial_y\varepsilon(0,\theta,s)=0$ for all $\theta\in [0,\frac{\pi}{2}]$ and $s>0$ it suffices to impose that $\lambda$ and $\mu$ satisfy the following ODE's:
\begin{equation}\label{LambdaEquation}-\alpha\Big(\frac{\lambda_s}{\lambda}+1\Big)+3L_{12}(\frac{\sin(2\tet)}{1+y}\pr_\tet\eps)(0)=\sqrt{\alpha} L_{12}(\tilde{L}\eps)(0)+L_{12}(N_2(\eps))(0)+L_{12}(N_3(\Up))(0),\end{equation}
\begin{equation}\label{MuEquation}\frac{\mu_s}{\mu}=(2+\delta)\Big(\frac{\lambda_s}{\lambda}+1\Big).\end{equation}
$$\pr_y\Up(0,\tet)\Big|_{s=0}=0$$
We also have the following bounds,
\begin{align}
&\Bigg|\alpha\Big(\frac{\lambda_s}{\lambda}+1\Big)-3L_{12}(\frac{\sin(2\tet)}{1+y}\pr_\tet\eps)(0)\Bigg|\lesssim\sqrt{\alpha}\|\eps\|_{\mathcal{H}^2}+\|\Up\|_{\mathcal{H}^2}^2.\\
&\Big|\frac{\lambda_s}{\lambda}+1\Big|\lesssim\frac{1}{\alpha}\|\eps\|_{\mathcal{H}^2}
\end{align}
\end{proposition}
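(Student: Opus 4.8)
The plan is to derive the two ODEs by forcing the two scalar constraints $L_{12}(\eps)(0,s)=0$ and $\pr_y\eps(0,\theta,s)=0$ to be propagated, treating $\mu$ and $\lambda$ as Lagrange multipliers for these constraints. First I would apply the operator $L_{12}$ to the second equation of \eqref{Equationepsilon} and evaluate at $y=0$. Using the stated fact that $L_{12}(y\pr_y f)(0)=0$ whenever $f(0,\theta)=0$, the transport term $\frac{\mu_s}{\mu}y\pr_y\eps$ and the scaling term's $y\pr_y$ piece of $\mathcal{S}_\delta(\eps)$ drop out when we enforce $\eps(0,\theta,s)=0$ (which itself is a consequence of $\pr_y\eps(0,\theta,s)=0$ together with $\eps$ vanishing at $y=0$, as is true for all functions in our class). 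What survives from $-(\frac{\lambda_s}{\lambda}+1)\mathcal{S}_\delta(\eps)$ is then $-(\frac{\lambda_s}{\lambda}+1)L_{12}(\eps)(0)=0$ once the constraint holds, \emph{but} the key point is that $\frac{d}{ds}L_{12}(\eps)(0)$ picks up the contribution of this term through $\pr_s L_{12}(\eps)(0)$; so I would instead differentiate the constraint in $s$, i.e. compute $\pr_s[L_{12}(\eps)(0)]=L_{12}(\pr_s\eps)(0)$, substitute $\pr_s\eps$ from \eqref{Equationepsilon}, and demand the result vanish. This yields a linear relation between $(\frac{\lambda_s}{\lambda}+1)$ and the remaining terms: the $\mathcal{M}_F\eps$ contribution splits, via Proposition \ref{proposition:lowerorderL} and the decomposition $\mathcal{M}_F=\mathcal{L}_{F_*}^T+(\text{correction})+\sqrt{\alpha}\tilde L$, into the explicit term $3L_{12}(\frac{\sin(2\tet)}{1+y}\pr_\tet\eps)(0)$ coming from the $-\frac{3}{1+y}\sin(2\theta)\pr_\theta\eps$ piece of $\mathcal{L}_{F_*}^T$ (here one uses $L_{12}(\mathcal{L}_{F_*}^T f)(0)=L_{12}(\mathcal{L}_{F_*}f)(0)$ by the design of the extra term in $\mathcal{L}_{F_*}^T$, and $L_{12}(\mathcal{L} f)(0)=0$ since $\mathcal{L} f= f+y\pr_y f-\frac{2f}{1+y}$ and all three pieces have vanishing $L_{12}(\cdot)(0)$ when $f(0,\theta)=0$), plus $\sqrt{\alpha}L_{12}(\tilde L\eps)(0)$, plus the $L_{12}$ of the nonlinearities $N_2(\eps)$ and $N_3(\Up)$ and of $E$. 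Tracking the $E$ term: $L_{12}(E)(0)=-\frac{\mu_s}{\mu}L_{12}(y\pr_y F)(0)+(\frac{\lambda_s}{\lambda}+1)L_{12}(\mathcal{S}_\delta(F))(0)=\alpha(\frac{\lambda_s}{\lambda}+1)$ up to the $\mu_s$ term which vanishes since $F(0,\theta)=0$, and using $L_{12}(F_*)(0)$ computes to the constant $\alpha$ (from $F_*=\frac{\Gamma}{c}\frac{4\alpha y}{(1+y)^2}$ and the normalization of $L_{12}$). Collecting everything gives exactly \eqref{LambdaEquation}.

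For \eqref{MuEquation}, I would impose the second constraint: propagating $\pr_y\eps(0,\theta,s)=0$. Differentiate the second equation of \eqref{Equationepsilon} in $y$ and evaluate at $y=0$. Now the terms $\frac{\mu_s}{\mu}y\pr_y\eps$ and $-(\frac{\lambda_s}{\lambda}+1)(1+\delta)y\pr_y\eps$ each contribute, upon $\pr_y$ and evaluation at $0$, a multiple of $\pr_y\eps(0,\theta)$: specifically $\pr_y(y\pr_y\eps)|_{y=0}=\pr_y\eps(0,\theta)$. Meanwhile $\pr_y$ of $E=-\frac{\mu_s}{\mu}y\pr_y F+(\frac{\lambda_s}{\lambda}+1)(F+(1+\delta)y\pr_y F)$ evaluated at $y=0$ produces $-\frac{\mu_s}{\mu}\pr_y F(0,\theta)+(\frac{\lambda_s}{\lambda}+1)(2+\delta)\pr_y F(0,\theta)$ (using $\pr_y(F+(1+\delta)y\pr_y F)|_{y=0}=(2+\delta)\pr_y F(0,\theta)$ since $\pr_y F(0,\theta)\neq 0$). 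Demanding that the $s$-derivative of $\pr_y\eps(0,\theta,s)$ vanish, together with the already-imposed $L_{12}(\eps)(0,s)=0$ (which via the elliptic theory forces $\Phi_\eps(0,\theta)=0$ and hence the nonlinear and linear-operator terms to vanish to the appropriate order at $y=0$), leaves precisely the balance $-\frac{\mu_s}{\mu}+(2+\delta)(\frac{\lambda_s}{\lambda}+1)=0$ after dividing by $\pr_y F(0,\theta)$, which is \eqref{MuEquation}. The initial condition $\pr_y\Up(0,\theta)|_{s=0}=0$ is simply assumed, and then propagated by the $\Up$-equation since that equation has no forcing and the relevant coefficients vanish at $y=0$; this needs only a short remark.

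Finally, the two estimates follow by inserting the bootstrap bounds. For the first: from \eqref{LambdaEquation}, $\alpha(\frac{\lambda_s}{\lambda}+1)-3L_{12}(\frac{\sin(2\tet)}{1+y}\pr_\tet\eps)(0)=-\sqrt{\alpha}L_{12}(\tilde L\eps)(0)-L_{12}(N_2(\eps))(0)-L_{12}(N_3(\Up))(0)$; the operator $L_{12}:L^2\to L^2$ is bounded, $\|\tilde L\eps\|_{L^2}\lesssim\|\eps\|_{\mathcal{H}^2}$ by the product and elliptic estimates (the $\sqrt{\alpha}$ already extracted), $N_2(\eps)$ is quadratic in $\eps$ hence $\lesssim\|\eps\|_{\mathcal{H}^2}^2$, and $N_3(\Up)$ is quadratic in $\Up$ hence $\lesssim\|\Up\|_{\mathcal{H}^2}^2$; since $\|\eps\|_{\mathcal{H}^2}$ is small, $\|\eps\|_{\mathcal{H}^2}^2\lesssim\sqrt{\alpha}\|\eps\|_{\mathcal{H}^2}$ in the trapped regime, giving the claimed $\lesssim\sqrt{\alpha}\|\eps\|_{\mathcal{H}^2}+\|\Up\|_{\mathcal{H}^2}^2$. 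For the second: bound $|L_{12}(\frac{\sin(2\tet)}{1+y}\pr_\tet\eps)(0)|\lesssim\|\eps\|_{\mathcal{H}^2}$ (one $\theta$-derivative is absorbed in $\mathcal{H}^2$) and divide \eqref{LambdaEquation} through by $\alpha$ to get $|\frac{\lambda_s}{\lambda}+1|\lesssim\frac{1}{\alpha}\|\eps\|_{\mathcal{H}^2}$, absorbing the quadratic terms into the linear one by smallness. I expect the main obstacle to be the bookkeeping in Step 1: making precise that all the apparently dangerous terms in $L_{12}(\mathcal{M}_F\eps)(0)$ and $L_{12}(N_2(\eps)+N_3(\Up)+E)(0)$ either vanish (because $\eps$, $\Phi_\eps$, and the relevant products vanish at $y=0$ once $L_{12}(\eps)(0)=0$) or reduce to the explicitly named terms, which requires careful use of the $L_{12}(\mathcal{L}_{F_*}^T f)(0)=L_{12}(\mathcal{L}_{F_*}f)(0)$ identity, the structure $U(\Phi_\eps),V(\Phi_\eps),\mathcal{R}(\Phi_\eps)=O(1)$ from \eqref{Approximation1}--\eqref{Approximation3} when $L_{12}(\eps)(0)=0$, and the precise form of $F_*$. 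The rest is routine once this reduction is in place.
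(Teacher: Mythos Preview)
Your proposal is correct and follows essentially the same route as the paper: apply $L_{12}$ and $\pr_y$ to the $\eps$-equation at $y=0$ to extract the $\lambda$- and $\mu$-laws, and observe that $\pr_y\Up(0,\theta)$ satisfies a homogeneous transport equation. One correction: your claim that ``all three pieces of $\mathcal{L}f=f+y\pr_y f-\tfrac{2f}{1+y}$ have vanishing $L_{12}(\cdot)(0)$ when $f(0,\theta)=0$'' is wrong for the first and third pieces (those are nonlocal integrals, not pointwise values); the paper instead uses the intertwining identity $L_{12}\bigl(\mathcal{L}_{F_*}(f)\bigr)=\mathcal{L}\bigl(L_{12}(f)\bigr)$ and then evaluates $\mathcal{L}(g)(0)=-g(0)$ at $g=L_{12}(\eps)$, which vanishes by the imposed constraint $L_{12}(\eps)(0)=0$.
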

\begin{remark}
It is useful to review the contents of Section \ref{MF}, particularly the definition of $\mathcal{M}_F$ and its main terms for this calculation. 
\end{remark}
\begin{proof}
\begin{align}
&\pr_y\Up_s\Big|_{y=0}+\frac{\mu_s}{\mu}\pr_y(y\pr_y\Up)\Big|_{y=0}-\Big(\frac{\lambda_s}{\lambda}+1\Big)\pr_y(\mathcal{S}_\delta(\Up))\Big|_{y=0}=\pr_y\mathcal{M}^\phi_F\Up\Big|_{y=0}\nonumber\\
&+\pr_y N_1(\Phi_\eps,\Up)\Big|_{y=0}\\
& \pr_y\eps_s\Big|_{y=0}+\frac{\mu_s}{\mu}\pr_y(y\pr_y \eps)\Big|_{y=0}-\Big(\frac{\lambda_s}{\lambda}+1\Big)\pr_y\mathcal{S}_\delta(\eps)\Big|_{y=0}=\pr_y E\Big|_{y=0}+\pr_y\mathcal{M}_F\eps\Big|_{y=0}\nonumber\\
&+\pr_y N_2(\eps)\Big|_{y=0}+\pr_y N_3(\Up)\Big|_{y=0},\\
\end{align}
By using that
\begin{align}
\eps(s,0,\theta)=\Up(s,0,\theta)=\pr_y\eps(s,0,\theta)=L_{12}(\eps)(0)=0,
\end{align}
we deduce easily that,
\begin{align}
\pr_y\eps_s(s,0,\theta)=\pr_y(y\pr_y\Up)\Big|_{y=0}=\pr_y(y\pr_y\varepsilon)\Big|_{y=0}=\pr_y(\mathcal{S}_\delta(\varepsilon))\Big|_{y=0}=0.
\end{align}
Also from $L_{12}(\eps)(0)=0$ we deduce that,
\begin{align}\label{vanish0}
\Phi_\eps(0,\theta)=\pr_\tet\Phi_\eps(0,\tet)=0,
\end{align}
Hence, from \eqref{vanish0} we obtain that for all $\theta\in[0,\frac{\pi}{2}]$,
\begin{align}
U(\Phi_\eps)\Big|_{y=0}=V(\Phi_\eps)\Big|_{y=0}=\mathcal{R}(\Phi_\eps)\Big|_{y=0}=0.
\end{align}

Hence,
\begin{align}
\pr_y N_1(\Phi_\eps,\Up)\Big|_{y=0}=\pr_y N_2(\eps)\Big|_{y=0}=\pr_y\mathcal{M}_F\eps\Big|_{y=0}=0.
\end{align}

By using that $F=\alpha F_*+\alpha^2 g$ with $\pr_yg(0,\theta)=0$ for all $\theta\in[0,\frac{\pi}{2}]$ and $F_*=\frac{\Gamma(\theta)}{c}\frac{2y}{(1+y)^2}$ we deduce that
$$\pr_y E\Big|_{y=0}=-\frac{\mu_s}{\mu}\pr_y F(0,\theta)+\Big(\frac{\lambda_s}{\lambda}+1\Big)(2+\delta)\pr_y F(0,\theta)=\Big(-\frac{\mu_s}{\mu}+(2+\delta)\Big(\frac{\lambda_s}{\lambda}+1\Big)\Big)\alpha\frac{2\Gamma(\theta)}{c}.$$
It follows from the previous computations that 
$$-\frac{\mu_s}{\mu}+(2+\delta)\Big(\frac{\lambda_s}{\lambda}+1\Big)=0.$$
Similarly, from 
\begin{align}
\pr_y\Up_s\Big|_{y=0}+\frac{\mu_s}{\mu}\pr_y(y\pr_y\Up)\Big|_{y=0}-\Big(\frac{\lambda_s}{\lambda}+1\Big)\pr_y(\mathcal{S}_\delta(\Up))\Big|_{y=0}=\pr_y\mathcal{M}^\phi_F\Up\Big|_{y=0}+\pr_y N_1(\Phi_\eps,\Up)\Big|_{y=0}
\end{align}
 we deduce that
\begin{align}
{\pr_y\Up(0,\tet)_s}+\frac{\mu_s}{\mu}\pr_y\Up(0,\tet)-{\pr_y\Up(0,\tet)}\Big(\frac{\lambda_s}{\lambda}+1\Big)(2+\gamma)-\pr_y(\mathcal{M}^\phi_F(\pr_y\Up))\Big|_{y=0}=0.
\end{align}

In addition, from $\Phi_F=\frac{1}{4\alpha}\sin(2\theta)L_{12}(\alpha F_*)+\Phi_g$ with $\Phi_g(0)=0$ and $L_{12}(F_*)(0)=4\alpha$, we deduce that
\begin{align}
U(\Phi_F)(0)=-3\sin(2\tet),~~V(\Phi_F)(0)=3\cos(2\tet)-1,~~\mathcal{R}(\Phi_F)(0)=2.
\end{align}
Let's compute the linear terms $\pr_y\mathcal{M}^\phi_F(\pr_y\Up)\Big|_{y=0}$:
\begin{align}
\pr_y\mathcal{M}^\phi_F(\pr_y\Up)\Big|_{y=0}=\Big(\gamma-4\alpha+6(1+\alpha)\cos^2(\tet)\Big)\pr_y\Up(0,\tet)-3\sin(2\tet)\pr_\tet \pr_y\Up(0,\tet).
\end{align}

Hence,
\begin{align}
\pr_y\Up(0,\tet)_s+\Big(\gamma-4\alpha+6(1+\alpha)\cos^2(\tet)\Big)\pr_y\Up(0,\tet)-3\sin(2\tet)\pr_\tet \pr_y\Up(0,\tet)=0.
\end{align}
Since $\pr_y\Up(0,\tet)$ is transported through the previous equation it is clear that if $\pr_y\Up(0,\tet)$ was $0$ initially it will stay for all $s\geq0$.

To get the law on $\lambda$ we apply $L_{12}$ to the equation of $\eps$ in \eqref{Equationepsilon} and take the trace at $y=0$.
$$\pr_s L_{12}(\eps)(0)+\frac{\mu_s}{\mu}L_{12}(y\pr_y \eps)(0)-\Big(\frac{\lambda_s}{\lambda}+1\Big)L_{12}(\mathcal{S}_\delta(\eps))(0)=L_{12}(E)(0)-L_{12}(\mathcal{M}_F\eps)(0)+L_{12}(N_2(\eps))+L_{12}(N_3(\Up))(0).$$
We compute first $L_{12}(E)(0)$ by using that $F= F_*+\alpha^2 g$ with $L_{12}(g)(0)=0$,
$$L_{12}(E)(0)=\Big(\frac{\lambda_s}{\lambda}+1\Big)L_{12}( F_*)(0)=\alpha\Big(\frac{\lambda_s}{\lambda}+1\Big).$$
We use from Proposition \ref{proposition:lowerorderL} that
\begin{align}\label{Linearopdecomp}
\mathcal{M}_F\eps=\mathcal{L}_{F_*}\eps-\frac{3\sin(2\tet)}{(1+y)}\pr_\tet\eps+\sqrt{\alpha}\tilde{L}\eps,
\end{align}
where $\mathcal{L}_{F_*}\eps:=\eps+z\partial_z \eps-2 \frac{\eps}{1+z}-\frac{2z\Gamma(\theta)}{c(1+z)^2}L_{12}(\eps)$.
It follows that,
$$L_{12}(\mathcal{M}_F\eps)(0)= L_{12}(\mathcal{L}_{F_*}\eps)(0)-L_{12}(\frac{3\sin(2\tet)}{(1+y)}\pr_\tet\eps)(0)+\sqrt{\alpha} L_{12}(\tilde{L}\eps)(0).$$
To prove that some terms are zero we will use the following identity. 

 \begin{equation}\label{L12L}  L_{12}\Big(\mathcal{L}_{F_*}(f)\Big)=\mathcal{L}\Big(L_{12}(f)\Big)\end{equation}
 \begin{equation}\label{LW} \mathcal{L}(g)w=gw+z\partial_z(gw).\end{equation}
 where $\mathcal{L}\eps:=f+z\partial_z f-2 \frac{f}{1+z}$.
By using the previous identity and $\eps_y(0,\tet)=L_{12}\eps(0)=0$ we deduce that
$L_{12}(\mathcal{L}_{F_*}\eps)(0)=0$ as well as $L_{12}(\mathcal{S}_\delta(\eps))(0)=0$.
Finally we obtain the following second law,
\begin{align}
-\alpha\Big(\frac{\lambda_s}{\lambda}+1\Big)+3L_{12}(\frac{\sin(2\tet)}{1+y}\pr_\tet\eps)(0)=\sqrt{\alpha} L_{12}(\tilde{L}\eps)(0)+L_{12}(N_2(\eps))(0)+L_{12}(N_3(\Up))(0).
\end{align}
Hence, by using Proposition \ref{proposition:lowerorderL}, Proposition \ref{P1} and Proposition \ref{P2} we deduce that
\begin{align}\label{goodboundonlambda}
\Bigg|\alpha\Big(\frac{\lambda_s}{\lambda}+1\Big)-3L_{12}(\frac{\sin(2\tet)}{1+y}\pr_\tet\eps)(0)\Bigg|\lesssim\sqrt{\alpha}\|\eps\|_{\mathcal{H}^2}+\alpha\|\eps\|_{\mathcal{H}^2}^2+\|\Up\|_{\mathcal{H}^2}^2.
\end{align}
There is also a rough bound
\begin{align}\label{roughboundonlambda}
\Big|\frac{\lambda_s}{\lambda}+1\Big|\lesssim\frac{1}{\alpha}\Big|L_{12}(\frac{\sin(2\tet)}{1+y}\pr_\tet\eps)(0)\Big|\lesssim\frac{1}{\alpha}\|\eps\|_{\mathcal{H}^2}.
\end{align}

\end{proof}


\section{Elliptic Estimates}\label{EllipticEstimates}

We now prove elliptic estimates in all $\mathcal{H}^k$ spaces. This was done in the case $k=2$ in \cite{E_Classical}. We consider solutions to the following elliptic boundary value problem. 

Given $F$ satisfying that $(F, K)_{L^2_\theta}\equiv 0$, we solve
\begin{equation}\label{PolarBSL} -\alpha^2D_R^2\Psi-\alpha D_R\Psi-\partial_{\theta\theta}\Psi+\partial_\theta\big(\tan(\theta)\Psi\big)-6\Psi=F.\end{equation}
We couple this equation with the natural boundary conditions on $\Psi$:
\[\Psi(R,0)=\Psi(R,\pi/2)=0,\qquad \lim_{R\rightarrow\infty} \Psi(R,\theta)=0.\]
We will show that \[\alpha^2|D_R^2\Psi|_{\mathcal{H}^k}+\alpha|D_R \Psi|_{\mathcal{H}^k}+|\partial_{\theta}^2 \Psi|_{\mathcal{H}^k}\leq C_k |F|_{\mathcal{H}^k}.\]
This has already been established in the case $k=2$. Observe that $D_R$ commutes with \eqref{PolarBSL}, so this allows us to prove higher elliptic regularity estimates for the radial derivatives. 
Now let us rewrite \eqref{PolarBSL} as:
\[-\partial_{\theta\theta}\Psi+\partial_\theta(\tan(\theta)\Psi)=F+6\Psi+\alpha^2 D_R^2 \Psi+\alpha D_R\Psi:=G.\]
Since estimates on the radial derivatives are relatively simple to get, it suffices to establish $\mathcal{H}^k$ estimates on just the angular part of the equation: 
\[-\partial_{\theta\theta}\Psi+\partial_\theta(\tan(\theta)\Psi)=G,\] for $|G|_{\mathcal{H}^k}\leq C_k |F|_{\mathcal{H}^k}$. 
Now we wish to show that the following quantity is non-positive up to lower order terms \[(\partial_\theta^{k+1}(\tan(\theta)\Psi), \partial_\theta^{k+2} \Psi \sin(2\theta)^{2k-\gamma})_{L^2_\theta}.\]
It is natural to consider $\tilde\Psi= \frac{\Psi}{\cos(\theta)}$ so that we wish to study:
\[(\partial_\theta^{k+1}(\sin(\theta)\tilde\Psi), \partial_\theta^{k+2} (\cos(\theta)\tilde\Psi) \sin(2\theta)^{2k-\gamma})_{L^2_\theta}.\]
By induction on $k$, it suffices to consider only the following three terms: 
\[\frac{1}{2}\int \partial_\theta^{k+1}\tilde\Psi\partial_\theta^{k+2}\tilde\Psi \sin(2\theta)^{2k+1-\gamma}-(k+2)\int\sin^2(\theta)\big(\partial_\theta^{k+1}\tilde\Psi\big)^2\sin(2\theta)^{2k-\gamma}\] \[+(k+1)\int \cos^2(\theta)\partial_\theta^k\tilde\Psi \partial_\theta^{k+2}\tilde\Psi \sin(2\theta)^{2k-\gamma}\]
\[=\frac{2k+1-\gamma}{4}\int (\partial_\theta^{k+1}\tilde\Psi)^2 \cos(2\theta) \sin(2\theta)^{2k-\gamma}-(k+2)\int\sin^2(\theta)\big(\partial_\theta^{k+1}\tilde\Psi\big)^2\sin(2\theta)^{2k-\gamma}\] \[-(k+1)\int \cos^2(\theta)\big(\partial_\theta^{k+1}\tilde\Psi \big)^2sin(2\theta)^{2k-\gamma}+E,\] where $E$ is lower order and satisfies \[|Ew_r^2|_{L^1_R}\leq C|F|_{\mathcal{H}^k}.\]
On the other hand, we have:
\[\frac{2k+1-\gamma}{4}\int (\partial_\theta^{k+1}\tilde\Psi)^2 \cos(2\theta) \sin(2\theta)^{2k-\gamma}-(k+2)\int\sin^2(\theta)\big(\partial_\theta^{k+1}\tilde\Psi\big)^2\sin(2\theta)^{2k-\gamma}\] \[-(k+1)\int \cos^2(\theta)\big(\partial_\theta^{k+1}\tilde\Psi \big)^2sin(2\theta)^{2k-\gamma}\leq 0.\] 
By induction on $k$ we now have the following theorem.
\begin{theorem}\label{Elliptic}
Let $k\geq 2$ and assume $F\in\mathcal{H}^k$ satisfies $(F, K)_{L^2_\theta}\equiv 0$. Then,  \[\alpha^2|D_R^2\Psi|_{\mathcal{H}^k}+\alpha|D_R \Psi|_{\mathcal{H}^k}+|\partial_{\theta}^2 \Psi|_{\mathcal{H}^k}\leq C_k |F|_{\mathcal{H}^k}.\]

\end{theorem}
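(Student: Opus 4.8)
The plan is to prove the estimate by induction on $k$, taking the case $k=2$ as the input from \cite{E_Classical}. Two structural features of \eqref{PolarBSL} drive the induction. The first is that $D_R=R\partial_R$ commutes with the entire operator on the left of \eqref{PolarBSL} — its coefficients depend only on $\theta$, and $D_R$ commutes with $D_R^2$ — so $D_R^j\Psi$ solves \eqref{PolarBSL} with right-hand side $D_R^jF$, which reduces the full $\mathcal{H}^k$ bound to angular estimates carrying an arbitrary number of $D_R$'s along. The second is that $(F,K)_{L^2_\theta}\equiv0$ — equivalently $L_{12}(F)\equiv0$ — is exactly the solvability condition for \eqref{PolarBSL}: the angular operator $-\partial_{\theta\theta}+\partial_\theta(\tan(\theta)\,\cdot\,)-6$ with Dirichlet data has one-dimensional kernel spanned by $\sin(2\theta)$ and cokernel spanned by $K$, and on the orthogonal complement of this single resonant mode it is coercive with a constant independent of $\alpha$ (this is the $k=2$ input of \cite{E_Classical}, and removing the resonant mode is what makes $C_k$ uniform in $\alpha$ — compare the splitting $\Phi_\eps=\tfrac1{4\alpha}\sin(2\theta)L_{12}(\eps)+\bar\Phi_\eps$ used earlier). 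Since $(D_R^jF,K)_{L^2_\theta}\equiv0$ too, this orthogonality survives the commutation. Carrying out the radial reduction already indicated above, one puts \eqref{PolarBSL} in the form $-\partial_{\theta\theta}\Psi+\partial_\theta(\tan(\theta)\Psi)=G$ with $G:=F+6\Psi+\alpha^2D_R^2\Psi+\alpha D_R\Psi$ satisfying $|G|_{\mathcal{H}^k}\le C_k|F|_{\mathcal{H}^k}$, and then concentrates on the angular estimate.

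The heart of the matter is the top-order angular energy estimate for $-\partial_{\theta\theta}\Psi+\partial_\theta(\tan(\theta)\Psi)=G$. I would differentiate $k$ times in $\theta$ and pair against $\partial_\theta^{k+2}\Psi$, in $\theta$ against the weight $\sin(2\theta)^{2k-\gamma}$ and in $R$ against the radial weight $w^2$ (this is precisely the weight making $w\sin(2\theta)^{k-\gamma/2}\partial_\theta^{k+2}\Psi$ the top-order piece of $|\partial_\theta^2\Psi|_{\mathcal{H}^k}$). The $-\partial_{\theta\theta}\Psi$ term contributes $-\int(\partial_\theta^{k+2}\Psi)^2w^2\sin(2\theta)^{2k-\gamma}$, so it suffices to show that $\int\partial_\theta^{k+1}(\tan(\theta)\Psi)\,\partial_\theta^{k+2}\Psi\,w^2\sin(2\theta)^{2k-\gamma}$ is nonpositive up to lower order. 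For this I substitute $\tilde\Psi=\Psi/\cos(\theta)$, which removes the $\tan(\theta)$-singularity at $\theta=\tfrac\pi2$, expand $\partial_\theta^{k+1}(\tan(\theta)\Psi)=\partial_\theta^{k+1}(\sin(\theta)\tilde\Psi)$ and $\partial_\theta^{k+2}\Psi=\partial_\theta^{k+2}(\cos(\theta)\tilde\Psi)$ by Leibniz — all terms in which fewer than $k+1$ derivatives land on $\tilde\Psi$ in one of the two factors go into an error $E$ with $|Ew^2|_{L^1_R}\le C|F|_{\mathcal{H}^k}$ by the inductive hypothesis — and integrate by parts once in $\theta$ (no boundary terms appear, since $\sin(2\theta)^{2k-\gamma}$ and its derivative both vanish at $\theta=0,\tfrac\pi2$ for $k\ge2$). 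This leaves exactly the three terms displayed above, and their sum is $\le E$ because of the pointwise inequality
\[\frac{2k+1-\gamma}{4}\cos(2\theta)-(k+2)\sin^2(\theta)-(k+1)\cos^2(\theta)\le 0\qquad\text{on }[0,\tfrac\pi2],\]
which holds with margin for every $k\ge2$ and every $\gamma=1+\tfrac\alpha{10}$ — the left-hand side equals $\tfrac{2k+1-\gamma}{4}\cos(2\theta)-(k+1)-\sin^2(\theta)$, whose maximum is attained at $\theta=0$ and equals $\tfrac{2k+1-\gamma}{4}-(k+1)\le-\tfrac74$. Therefore $\int(\partial_\theta^{k+2}\Psi)^2w^2\sin(2\theta)^{2k-\gamma}\le\big|\int\partial_\theta^kG\,\partial_\theta^{k+2}\Psi\,w^2\sin(2\theta)^{2k-\gamma}\big|+E$, and Cauchy--Schwarz followed by absorption of $\tfrac12\int(\partial_\theta^{k+2}\Psi)^2w^2\sin(2\theta)^{2k-\gamma}$ yields the top-order bound by $|G|_{\mathcal{H}^k}$, hence by $|F|_{\mathcal{H}^k}$.

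To finish the induction I would rerun the same pairing after applying $D_R^j$ to \eqref{PolarBSL}, producing the mixed $D_\theta^iD_R^j$ contributions to $|\partial_\theta^2\Psi|_{\mathcal{H}^k}$; the contributions carrying only $D_R$-derivatives (those weighted by $w/\sin^{\eta/2}(2\theta)$) come from the $k=2$ estimate of \cite{E_Classical} applied to $D_R^jF$, the point being that $-\alpha^2D_R^2$ is the coercive term in the radial direction; and the quantities $\alpha^2|D_R^2\Psi|_{\mathcal{H}^k}$, $\alpha|D_R\Psi|_{\mathcal{H}^k}$ that enter through $G$ are absorbed thanks to their $\alpha$-small prefactors once $\alpha$ is small, closing the induction. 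The step I expect to be the main obstacle is the bookkeeping around the two degenerate endpoints $\theta=0,\tfrac\pi2$ together with the two distinct weights built into the $\mathcal{H}^k$ norm: one must confirm that every weighted integration by parts is boundary-term-free (which forces attention to the exact exponent $2k-\gamma$ and to the $\Psi/\cos\theta$ substitution keeping quantities bounded near $\tfrac\pi2$), that the pointwise inequality above stays strictly negative uniformly in $k$ and $\alpha$, and that the powers of $\alpha$ are tracked exactly so that decoupling the radial and angular estimates — in particular absorbing the $D_R$-feedback in $G$ — is a genuine small-constant argument rather than a circular one.
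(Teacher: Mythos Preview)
Your proposal is correct and follows essentially the same route as the paper's proof: induction on $k$ with the $k=2$ case imported from \cite{E_Classical}, commutation of $D_R$ with \eqref{PolarBSL} to reduce to the angular problem $-\partial_{\theta\theta}\Psi+\partial_\theta(\tan(\theta)\Psi)=G$, the substitution $\tilde\Psi=\Psi/\cos\theta$, and the same three top-order terms whose nonpositivity comes from the identical pointwise inequality. You in fact supply more detail than the paper does --- in particular the explicit verification that the maximum of $\tfrac{2k+1-\gamma}{4}\cos(2\theta)-(k+1)-\sin^2\theta$ is attained at $\theta=0$ and is strictly negative, and the explicit remark that the $\alpha$-prefactors on the $D_R$-feedback in $G$ are what close the loop --- but the architecture is the same.
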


\section{Final Energy Estimates}\label{Final}

Recall the equation solved by $(\eps,\Up,\Phi_\eps)$:
\begin{align}
\left\{\begin{array}{lll}
&\Up_s+\frac{\mu_s}{\mu}y\pr_y\Up-\frac{1}{2}\Big(\frac{\lambda_s}{\lambda}+1\Big)\mathcal{S}_\delta(\Up)+\mathcal{M}^\phi_F\Up=N_1(\Phi_\eps,\Up)\\
& \eps_s+\frac{\mu_s}{\mu}y\pr_y \eps-\frac{1}{2}\Big(\frac{\lambda_s}{\lambda}+1\Big)\mathcal{S}_\delta(\eps)+\mathcal{M}_F\eps=E+N_2(\eps)+N_3(\Up),\\
&-\alpha^2y^2\partial_{yy}\Phi_\eps-\alpha(5+\alpha)z\partial_z\Phi_\eps-\partial_{\theta\theta}\Phi_\eps+\partial_\theta\big(\tan(\theta)\Phi_\eps\big)-6\Phi_\eps=\eps.
\end{array}
 \right.
\end{align}

As alluded to in the bootstrap section \ref{Bootstrap}, our goal will be to now control the following total energy:
 \[\mathcal{E}=\mathcal{E}(\eps,U^\phi)=|\eps|_{\mathcal{H}^k}+|U^\phi|_{\mathcal{H}^{k+1}}+|\partial_\theta U^\phi|_{\mathcal{H}^k}+|\tan(\theta)U^\phi|_{\mathcal{H}^k}.\] Strictly speaking, we only now have linear coercivity for $\eps$ and $U^\phi$ in $\mathcal{H}^k$ and $\mathcal{H}^{k+1}$ respectively; however, it turns out that because of the structure of the equations we will also get linear damping on the second to terms in the energy.  Our goal will be to now show that $\mathcal{E}$ decays exponentially if $\mathcal{E}_0$ is sufficiently small. In the coming sections we will compute $\frac{d}{ds}\mathcal{E}$ term by term. 

\subsection{Bound on $\frac{d}{ds}(\eps,\eps)_{\mathcal{H}^k}$}

We have that \[\frac{1}{2}\frac{d}{ds} (\eps,\eps)_{\mathcal{H}^k}\leq -(\mathcal{M}_F\eps,\eps)_{\mathcal{H}^k}+(E,\eps)_{\mathcal{H}^k}+\Big|\frac{\mu_s}{\mu}\Big||(y\partial_y\eps,\eps)_{\mathcal{H}^k}|+\Big|\frac{\lambda_s}{\lambda}+1\Big||(S_\delta(\eps),\eps)_{\mathcal{H}^k}|+(N_2(\eps),\eps)_{\mathcal{H}^k}+(N_3(U^\phi),\eps)_{\mathcal{H}^k}.\]
\subsubsection*{Coercivity from $\mathcal{M}_F$}
We first write:
\[-(\mathcal{M}_F\eps,\eps)_{\mathcal{H}^k}+(E,\eps)_{\mathcal{H}^k}=-(M_F(\eps)-\frac{\Gamma(\theta)}{c}\frac{2 y^2}{(1+y)^3}L_{12}(\frac{3}{1+y}\sin(2\tet)\pr_\tet \eps)(0),\eps)_{\mathcal{H}^k}\]\[+(E-\frac{\Gamma(\theta)}{c}\frac{2 y^2}{(1+y)^3}L_{12}(\frac{3}{1+y}\sin(2\tet)\pr_\tet \eps)(0),\eps)_{\mathcal{H}^k}\]
Now, from Proposition \ref{proposition:coercivityM}, we have that:
\[(\mathcal{M}_F\eps-\frac{\Gamma(\theta)}{c}\frac{2 y^2}{(1+y)^3}L_{12}(\frac{3}{1+y}\sin(2\tet)\pr_\tet \eps)(0),\eps)_{\mathcal{H}^k}\geq c|\eps|_{\mathcal{H}^k}^2.\]
\subsubsection*{Estimate of the Error Term}
In addition, recall that
\begin{align}
&E-\frac{\Gamma(\theta)}{c}\frac{2 y^2}{(1+y)^2}L_{12}(\frac{3}{1+y}\sin(2\theta)\partial_\theta\eps)(0)\nonumber\\
&= -\frac{\mu_s}{\mu}y\pr_y F+\Big(\frac{\lambda_s}{\lambda}+1\Big)\mathcal{S}_\delta(F)-\frac{\Gamma(\theta)}{c}\frac{2 y^2}{(1+y)^2}L_{12}(\frac{3}{1+y}\sin(2\theta)\partial_\theta\eps)(0).
\end{align}
By using $\frac{\mu_s}{\mu}=(2+\delta)(\frac{\lambda_s}{\lambda}+1)$ we get that,
\begin{align}
E-\frac{\Gamma(\theta)}{c}\frac{2 y^2}{(1+y)^3}L_{12}(\eps)(0)=\Big(\frac{\lambda_s}{\lambda}+1\Big)(F-y\pr_y F)-\frac{\Gamma(\theta)}{c}\frac{2 y^2}{(1+y)^3}L_{12}(\frac{3}{1+y}\sin(2\theta)\partial_\theta\eps)(0).
\end{align}
Since, $F=F_*+\alpha^2g$ with $F_*=\frac{4\alpha\Gamma y}{c(1+y)^2}$ and $F_*-y\pr_y F_*=\frac{8\alpha y^2}{(1+y)^2}$  we deduce that
\begin{align}
&E-\frac{\Gamma(\theta)}{c}\frac{2 y^2}{(1+y)^3}L_{12}(\frac{3}{1+y}\sin(2\theta)\partial_\theta\eps)(0)=\frac{4\Gamma y^2}{c(1+y)^3}\Big(\alpha\Big(\frac{\lambda_s}{\lambda}+1\Big)-L_{12}(\frac{3}{1+y}\sin(2\theta)\partial_\theta\eps)(0)\Big)\nonumber\\
&+\frac{\alpha^2}{2}\Big(\frac{\lambda_s}{\lambda}+1\Big)(g-y\pr_y g).
\end{align}
Hence, by using \eqref{goodboundonlambda} and \eqref{roughboundonlambda} we deduce that
\begin{align}
\Big|\Big(E-\frac{\Gamma(\theta)}{c}\frac{2 y^2}{(1+y)^3}L_{12}(\frac{3\sin(2\tet)}{1+y}\pr_\tet\eps)(0),\eps\Big)_{\mathcal{H}^k}\Big|&\lesssim \|\eps\|_{\mathcal{H}^k}\Big(\sqrt{\alpha}\|\eps\|_{\mathcal{H}^2}+\alpha\|\eps\|_{\mathcal{H}^2}^2\Big)\nonumber\\
&+\alpha\|g\|_{\mathcal{H}^k}\|\eps\|_{\mathcal{H}^k}^2.
\end{align}

\subsubsection*{Nonlinear terms}

We have that
\[|(y\partial_y\eps,\eps)_{\mathcal{H}^k}|=|(D_y \eps,\eps)_{\mathcal{H}^k}|\leq \frac{C}{\sqrt{\alpha}}|\eps|_{\mathcal{H}^k}^2,\qquad \Big|\frac{\mu_s}{\mu}\Big|\leq \frac{C}{\alpha}|\eps|_{\mathcal{H}^k},\] where the bound on $\frac{\mu_s}{\mu}$ comes from Proposition \ref{proposition:derivationoflaw}. In particular, 
\[\Big|\frac{\mu_s}{\mu}\Big|(y\partial_y\eps,\eps)_{\mathcal{H}^k}\leq \frac{C}{\alpha^{3/2}}|\eps|_{\mathcal{H}^k}^3.\]
Using identical reasoning and recalling that $\mathcal{S}_\delta(\eps)=\eps+(1+\delta)y\partial_y\eps$, we get:
\[\Big|\frac{\lambda_s}{\lambda}+1\Big||(S_\delta(\eps),\eps)_{\mathcal{H}^k}|\leq \frac{C}{\alpha^{3/2}}|\eps|_{\mathcal{H}^k}^3.\]

Next, \[(N_2(\eps),\eps)_{\mathcal{H}^k}=(-\frac{U_{\Phi_\eps}}{\sin(2\theta)}D_\theta\eps,\eps)_{\mathcal{H}^k}-\alpha(V(\Phi_\eps)D_y\eps,\eps)_{\mathcal{H}^k}+(\mathcal{R}(\Phi_\eps)\eps,\eps)_{\mathcal{H}^k}.\]
Observe that using Theorem \ref{Elliptic} and separating the $L_{12}$ part, we have
\[\Big|\frac{U_{\Phi_\eps}}{\sin(2\theta)}\Big|_{\mathcal{H}^k}+\Big|V_{\Phi_\eps}\Big|_{\mathcal{H}^k}+|\mathcal{R}(\Phi_\eps)|_{\mathcal{H}^k}\leq \frac{C}{\alpha}|\eps|_{\mathcal{H}^k} \]
In particular, using the product and transport estimates from Section \ref{ProductRules}, we get\footnote{This can be improved to $\alpha^{-1}$, but it is not important.}:
\[|(N_2(\eps),\eps)_{\mathcal{H}^k}|\leq \frac{C}{\alpha^{3/2}}|\eps|_{\mathcal{H}^k}^3.\]

Finally, we need to look at $N_3(\mathcal{U}^\phi)=2\mathcal{U}^\phi\tan(\theta)\alpha D_y \mathcal{U}^\phi+2\mathcal{U}^\phi\partial_\theta \mathcal{U}^\phi$. It is clear that \[|N_3(\mathcal{U}^\phi)|_{\mathcal{H}^k}\leq \frac{C}{\alpha^{1/2}}(|\tan(\theta)\mathcal{U}^\phi|_{\mathcal{H}^k}+|\partial_\theta \mathcal{U}^\phi|_{\mathcal{H}^k})|\mathcal{U}^\phi|_{\mathcal{H}^{k+1}}\leq \frac{C}{\alpha^{1/2}}\mathcal{E}^{2}.\]
In conclusion, we see that
\[\frac{d}{ds}(\eps,\eps)_{\mathcal{H}^k}\leq -c \|\eps\|_{\mathcal{H}^k}^2+\frac{C}{\alpha^{3/2}}\mathcal{E}^3.\]

\subsection{Bound on $\frac{d}{ds}(\mathcal{U}^\phi,\mathcal{U}^\phi)_{\mathcal{H}^{k+1}_{U_0}}$}

\[\frac{1}{2}\frac{d}{ds}(\mathcal{U}^\phi,\mathcal{U}^\phi)_{\mathcal{H}^{k+1}_{U_0}}\leq-(\mathcal{M}_F^\phi U^\phi,U^\phi)_{\mathcal{H}^{k+1}_{U_0}}+\Big|\frac{\mu_s}{\mu}\Big| |(y\partial_y \mathcal{U}^\phi, \mathcal{U}^\phi)_{\mathcal{H}^{k+1}_{U_0}}|+\Big|\frac{\lambda_s}{\lambda}+1\Big||(\mathcal{S}_\delta(\mathcal{U}_\phi),\mathcal{U}^\phi)_{\mathcal{H}^{k+1}_{U_0}}|+|(N_1(\Phi_\eps, \mathcal{U}^\phi),\mathcal{U}^\phi)_{\mathcal{H}^{k+1}_{U_0}}|.\]
Now, we know from \eqref{Mphi1} that 
\[(\mathcal{M}_F^\phi U^\phi,U^\phi)_{\mathcal{H}^{k+1}_{U_0}}\geq c|\mathcal{U}^\phi|_{\mathcal{H}^{k+1}}^2.\]
Moreover, as before, we have:
\[\Big|\frac{\mu_s}{\mu}\Big| |(y\partial_y \mathcal{U}^\phi, \mathcal{U}^\phi)_{\mathcal{H}^{k+1}_{U_0}}|+\Big|\frac{\lambda_s}{\lambda}+1\Big||(\mathcal{S}_\delta(\mathcal{U}_\phi),\mathcal{U}^\phi)_{\mathcal{H}^{k+1}_{U_0}}|\leq \frac{C}{\alpha^{3/2}}\mathcal{E}^3.\]
Now let us engage with the term $|(N_1(\Phi_\eps, \mathcal{U}^\phi),\mathcal{U}^\phi)_{\mathcal{H}^{k+1}_{U_0}}|$. We need to be a little careful with this term since we are only allowed to put $\eps$ in $\mathcal{H}^{k}$. What saves us is that we have bounds on $\tan(\theta) U^\phi$ and $\partial_\theta U^\phi$ and not just $U^\phi$.
\[|(N_1(\Phi_\eps, \mathcal{U}^\phi),\mathcal{U}^\phi)_{\mathcal{H}^{k+1}_{U_0}}|\leq 
|(U(\Phi_\eps)\partial_\theta \mathcal{U}^\phi, \mathcal{U}^\phi)_{\mathcal{H}^{k+1}_{U_0}}|+
|(V(\Phi_\eps)D_y \mathcal{U}^\phi, \mathcal{U}^\phi)_{\mathcal{H}^{k+1}_{U_0}}+
|(R(\Phi_\eps)\mathcal{U}^\phi,\mathcal{U}^\phi)_{\mathcal{H}^{k+1}_{U_0}}|+|(V(\Phi_\eps)\mathcal{U}^\phi,\mathcal{U}^\phi)_{\mathcal{H}^{k+1}_{U_0}}|\]

Observe that by a direct computation, we have: \[|(U(\Phi_\eps)\partial_\theta \mathcal{U}^\phi, \mathcal{U}^\phi)_{\mathcal{H}^{k+1}_{U_0}}|\leq C|\mathcal{U}^\phi|_{\mathcal{H}^{k+1}}|\partial_\theta \mathcal{U}^\phi|_{L^\infty}|U(\Phi_\eps)|_{\mathcal{H}^{k+1}}+\frac{C}{\alpha^{3/2}}|\mathcal{U}^\phi|_{\mathcal{H}^{k+1}}^2|\partial_\theta U(\Phi_\eps)|_{\mathcal{H}^k}\leq \frac{C}{\alpha^{3/2}}\mathcal{E}^3,\] since $\Phi_\eps$ contains a $\frac{1}{\alpha}$ in it and using the embedding proven in Proposition \ref{infinity_separated}. Similarly, 
\[|(V(\Phi_\eps)D_y \mathcal{U}^\phi, \mathcal{U}^\phi)_{\mathcal{H}^{k+1}_{U_0}}|\leq \frac{C}{\alpha^{3/2}}\mathcal{E}^3.\] 

Next, we need to look carefully at $|(R(\Phi_\eps),\mathcal{U}^\phi)_{\mathcal{H}^{k+1}_{U_0}}|$:
\[|(R(\Phi_\eps),\mathcal{U}^\phi)_{\mathcal{H}^{k+1}_{U_0}}|=2\Big|\Big((\tan(\theta)\Phi_\eps+\alpha\tan(\theta)D_y\Phi_\eps+\partial_\theta\Phi_\eps) \mathcal{U}^\phi, \mathcal{U}^\phi\Big)_{\mathcal{H}^{k+1}_{U_0}}\Big|\] The only non-trivial term is:
\[|(\tan(\theta)D_y\Phi_\eps \mathcal{U}^\phi, \mathcal{U}^\phi)_{\mathcal{H}^{k+1}_{U_0}}
|\leq C(|D_y\Phi_\eps|_{\mathcal{H}^{k+1}}|\tan(\theta)\mathcal{U}^\phi|_{L^\infty}|\mathcal{U}^\phi|_{\mathcal{H}^{k+1}}+\frac{C}{\alpha^{3/2}}|\mathcal{U}^\phi|_{\mathcal{H}^{k+1}}^2|\eps|_{\mathcal{H}^{k}})\leq \frac{C}{\alpha^{3/2}}\mathcal{E}^3.\]
Putting the above together, we see that:
\[\frac{d}{ds}(\mathcal{U}^\phi,\mathcal{U}^\phi)_{\mathcal{H}^{k+1}_{U_0}}\leq -c\|\mathcal{U}^\phi\|_{\mathcal{H}^{k+1}}^2+C\mathcal{E}^3.\]


\subsection{Bounds on $\frac{d}{ds}(\partial_\theta\mathcal{U}^\phi, \partial_\theta\mathcal{U}^\phi)_{\mathcal{H}^{k}_{U_1}}$ and $\frac{d}{ds}(\tan(\theta)\mathcal{U}^\phi, \tan(\theta)\mathcal{U}^\phi)_{\mathcal{H}^{k}_{U_2}}$.}\label{SwirlParts}
The non-linear estimates here are very similar to the above, so we omit them. We have to be very careful about the linear estimates, however, since $\partial_\theta$ and $\tan(\theta)$ \emph{do not} commute with the linear operator $\mathcal{M}_F^\phi$. The important fact is that the commutator will have a good sign in both cases. Let us recall (from Subsection \ref{MPhi}): \[\mathcal{M}^\phi_F (f)=\mathcal{M}^\phi_F (f)=y+y\partial_y f-\frac{3}{1+y}\sin(2\theta)\partial_\theta f+\frac{4-6\sin^2(\theta)}{1+y}f+l.o.t.\]

\[\partial_\theta \mathcal{M}^\phi_F(f)=  y\partial_\theta f+y\partial_y f+\frac{3}{1+y}\sin(2\theta)\partial_\theta \partial_\theta f+\frac{(4-6\sin^2(\theta))}{1+y}\partial_\theta f-\frac{6}{1+y}\cos(2\theta)\partial_\theta f-\frac{6\sin(2\theta)}{1+y} f +l.o.t. \] 
\[=  y\partial_\theta f+y\partial_y f+\frac{3}{1+y}\sin(2\theta)\partial_\theta \partial_\theta f+\frac{(-2+6\sin^2(\theta))}{1+y}\partial_\theta f-\frac{6\sin(2\theta)}{1+y} f +l.o.t. \] 
\[=\mathcal{L}_{F_*}(\partial_\theta f)+\frac{6\sin^2(\theta)}{1+y}\partial_\theta f+ l.o.t.\]
It is then easy to show, as before, that we can find an inner product $(\cdot,\cdot)_{\mathcal{H}^k_{U_1}}$ so that 
\[(\partial_\theta \mathcal{M}^\phi_F(f), \partial_\theta f)_{\mathcal{H}^k_{U_1}}\geq c|\partial_\theta \mathcal{U}^\phi|_{\mathcal{H}^k}^2.\] 
It then follows that
\[\frac{d}{ds}(\partial_\theta\mathcal{U}^\phi,\partial_\theta\mathcal{U}^\phi)_{\mathcal{H}^k_{U_1}}\leq -c|\partial_\theta \mathcal{U}^\phi|_{\mathcal{H}^k}^2+\frac{C}{\alpha^{3/2}}\mathcal{E}^3.\]

Now we check what happens when we multiply by $\tan(\theta)$. In this case we get:
\[\tan(\theta) \mathcal{M}^\phi_F(f)= \mathcal{M}^\phi_F(\tan(\theta)f)+\frac{3}{1+y}\sin(2\theta)\sec^2(\theta)f+l.o.t.\]
\[=\mathcal{M}^\phi_F(\tan(\theta)f)+\frac{6}{1+y}\tan(\theta)f+l.o.t.\] This again implies that 
\[(\tan(\theta)\mathcal{M}^\phi_F(f), \tan(\theta)f)_{\mathcal{H}^{k}_{U_2}}\geq c|f|_{\mathcal{H}^k}^2,\] with the inner product giving a norm equivalent to the $\mathcal{H}^k$ norm. Thus it is not difficult to see that
\[\frac{d}{ds}(\tan(\theta)\mathcal{U}^\phi,\tan(\theta)\mathcal{U}^\phi)_{\mathcal{H}^k_{U_2}}\leq -c|\tan(\theta) \mathcal{U}^\phi|_{\mathcal{H}^k}^2+\frac{C}{\alpha^{3/2}}\mathcal{E}^3.\] 
\subsection{Final Estimate}

Putting together what we gained from the preceding calculations and defining \[\bar{\mathcal{E}}=(\eps,\eps)_{\mathcal{H}^k}+(\mathcal{U}^\phi,\mathcal{U}^\phi)_{\mathcal{H}^{k+1}_{U_0}}+(\partial_\theta\mathcal{U}^\phi,\partial_\theta\mathcal{U}^\phi)_{\mathcal{H}^k_{U_1}}+(\tan(\theta)\mathcal{U}^\phi,\tan(\theta)\mathcal{U}^\phi)_{\mathcal{H}^{k}_{U_2}},\] we get:
\[\frac{d}{ds} 
\bar{\mathcal{E}}
\leq 
-c\bar{\mathcal{E}}+
\frac{C}{\alpha^{3/2}}\bar{\mathcal{E}}^{3/2},\] with $c$ and $C$ independent of $\alpha$. Note that $\bar{\mathcal{E}}\approx \mathcal{E}^2$ with constants independent of $\alpha$. Theorem \ref{StabilityTheorem} follows directly from this bound.

\section{Acknowledgements}
T.M.E. acknowledges funding from the NSF DMS-1817134. T. G and N. M are partially supported by SITE (Center for Stability, Instability, and Turbulence at NYUAD).  N. M is partially supported by  NSF DMS-1716466.  

\appendix
\section{Appendix: Product Rules}\label{ProductRules}

We now move to establish energy estimates in $\mathcal{H}^k$. First recall from \cite{E_Classical} that $\mathcal{H}^k$ embeds continuously in $L^\infty$. Let us also observe the following lemma that implies the embedding. 

\begin{lemma}\label{infinity_separated}For all $z$, we have
\[\sup_\theta |g(z,\theta)|^2\leq \frac{C}{\sqrt{\gamma-1}} \int_0^{\pi/2} |\partial_\theta g(z,\theta)|^2 \sin(2\theta)^{2-\gamma}d\theta.\] And for all $\theta$ we have
\[\sup_z |g(z,\theta)|^2\leq C\int_0^\infty |D_z g(z,\theta)|^2 \frac{(1+z)^4}{z^4}dz.\]
\end{lemma}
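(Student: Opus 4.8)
The two inequalities are one–dimensional weighted Agmon (Gagliardo–Nirenberg) estimates — the first in the $\theta$ variable with $z$ frozen, the second in the $z$ variable with $\theta$ frozen — so the plan is to prove each by the fundamental theorem of calculus followed by a weighted Cauchy–Schwarz and an absorption step. The only structural input required is that $g$ vanishes at the relevant boundary: for the functions in $\mathcal{H}^k$ to which this is applied one has $g(z,0)=g(z,\pi/2)=0$ and $g(0,\theta)=0$, and in any case finiteness of the right–hand side already forces $\partial_\theta g\in L^1$ near $\theta=0,\pi/2$ (resp.\ $\partial_z g\in L^1$ near $z=0$) by Cauchy–Schwarz against the reciprocal weight, so those boundary traces are $0$ and no boundary term survives.

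For the first inequality I would fix $z$ and write, for $\theta\le\pi/4$, $g(z,\theta)^2=2\int_0^\theta g\,\partial_{\theta'}g\,d\theta'$, and for $\theta\ge\pi/4$ use instead $g(z,\theta)^2=-2\int_\theta^{\pi/2}g\,\partial_{\theta'}g\,d\theta'$; this two–sided split is exactly what lets one always integrate outward from the endpoint at which $\sin(2\theta')$ degenerates. Inserting $1=\sin(2\theta')^{(\gamma-2)/2}\sin(2\theta')^{(2-\gamma)/2}$ and applying Cauchy–Schwarz gives
\[ g(z,\theta)^2\le 2\Big(\int_0^{\pi/2} g^2\,\sin(2\theta')^{\gamma-2}\,d\theta'\Big)^{1/2}\Big(\int_0^{\pi/2}(\partial_{\theta'}g)^2\,\sin(2\theta')^{2-\gamma}\,d\theta'\Big)^{1/2}. \]
Since $2-\gamma<1$, the weight $\sin(2\theta')^{\gamma-2}$ is integrable at both endpoints, with $\int_0^{\pi/2}\sin(2\theta')^{\gamma-2}\,d\theta'$ finite and of size governed by an inverse power of $\gamma-1$; bounding $g^2$ in the first factor by $\sup_\theta g(z,\theta)^2$, pulling it out of the integral, and absorbing it into the left–hand side then yields the estimate. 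The delicate accounting here is the precise power of $\gamma-1$: a naive application of Cauchy–Schwarz only produces a factor $(\gamma-1)^{-1}$, and sharpening this to the stated $(\gamma-1)^{-1/2}$ is where one must exploit the two–sided vanishing of $g$ on $[0,\pi/2]$ together with the symmetry of $\sin(2\theta)$ about $\theta=\pi/4$ — this is the step I expect to be the main obstacle.

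For the second inequality I would fix $\theta$, use $g(0,\theta)=0$ to write $g(z,\theta)^2=2\int_0^z g\,\partial_{z'}g\,dz'=2\int_0^z g\,\dfrac{D_{z'}g}{z'}\,dz'$, and factor $\tfrac1{z'}=\tfrac{z'}{(1+z')^2}\cdot\tfrac{(1+z')^2}{z'^2}$. Cauchy–Schwarz then gives
\[ g(z,\theta)^2\le 2\Big(\int_0^\infty g^2\,\frac{z'^2}{(1+z')^4}\,dz'\Big)^{1/2}\Big(\int_0^\infty (D_{z'}g)^2\,\frac{(1+z')^4}{z'^4}\,dz'\Big)^{1/2}, \]
and now $\int_0^\infty z'^2(1+z')^{-4}\,dz'$ is a finite universal constant (independent of $\gamma$ and $\alpha$ — in fact equal to $1/3$), so replacing $g^2$ by its supremum over $z'$ and absorbing closes the estimate with a universal $C$. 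The contribution near $z'=0$ is handled by $g(0,\theta)=0$, and the behaviour at $z'\to\infty$ is harmless since $\partial_{z'}g=D_{z'}g/z'$ is integrable there against the stated weight. Thus the only genuinely non-routine point in the whole lemma is pinning down the exponent $(\gamma-1)^{-1/2}$ in the first bound; the rest is a matter of bookkeeping.
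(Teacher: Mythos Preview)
Your approach is correct and complete; the paper itself gives no proof of this lemma (it is stated as a fact, implicitly deferred to \cite{E_Classical}), so there is nothing to compare against. The one point you flag as a potential obstacle --- sharpening the constant in the first inequality from $(\gamma-1)^{-1}$ to $(\gamma-1)^{-1/2}$ --- is in fact a non-issue: the $\sqrt{\gamma-1}$ in the lemma as stated appears to be a misprint. Your ``naive'' Cauchy--Schwarz argument already gives
\[
\sup_\theta |g(z,\theta)|^2 \;\le\; \frac{C}{\gamma-1}\int_0^{\pi/2} |\partial_\theta g(z,\theta)|^2\,\sin(2\theta)^{2-\gamma}\,d\theta,
\]
and this is exactly the bound the paper actually \emph{uses}: in the proof of Proposition~\ref{P1} the lemma is invoked to obtain $I\le C|g|_{\mathcal H^4}^2\cdot \dfrac{C}{\gamma-1}|f|_{\mathcal H^4}^2$, and in Proposition~\ref{T1} the stated embedding is $|D^j g|_{L^\infty}\le \dfrac{C}{\sqrt{\gamma-1}}|g|_{\mathcal H^k}$, i.e.\ $|D^j g|_{L^\infty}^2\le \dfrac{C}{\gamma-1}|g|_{\mathcal H^k}^2$. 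Moreover, the constant $(\gamma-1)^{-1}$ is sharp for the first inequality (test against $g(\theta)=\sin(2\theta)^{\gamma-1}$ near an endpoint), so no improvement to $(\gamma-1)^{-1/2}$ is possible. Your proof of the second inequality is fine as written.
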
 


As a consequence, we have the following result.

\begin{proposition}\label{P1}[First Product Rule]
Let $k\geq 4$ and assume $f,g\in \mathcal{H}^k$. Then, $fg\in \mathcal{H}^k$ and \[|fg|_{\mathcal{H}^k}\leq \frac{C}{\sqrt{\gamma-1}}|f|_{\mathcal{H}^k}|g|_{\mathcal{H}^k}.\]
\end{proposition}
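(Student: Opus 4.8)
The plan is to reduce the estimate to the Leibniz rule for the ``logarithmic'' derivatives $D_R=R\partial_R$, $D_\theta=\sin(2\theta)\partial_\theta$ together with the $L^\infty$ embedding of Lemma~\ref{infinity_separated}. The key elementary fact is that $D_R$ and $D_\theta$ commute and obey the ordinary product rule with no weights attached, so that
\[
D_\theta^{\,i}D_R^{\,j}(fg)=\sum_{i_1+i_2=i}\ \sum_{j_1+j_2=j}\binom{i}{i_1}\binom{j}{j_1}\bigl(D_\theta^{\,i_1}D_R^{\,j_1}f\bigr)\bigl(D_\theta^{\,i_2}D_R^{\,j_2}g\bigr).
\]
Feeding this into the two sums defining the $\mathcal{H}^k$ norm in \eqref{HsNorm}, it is enough to bound each product $\bigl(D_\theta^{\,i_1}D_R^{\,j_1}f\bigr)\bigl(D_\theta^{\,i_2}D_R^{\,j_2}g\bigr)$, weighted by $W$ (or by $w/\sin^{\eta/2}(2\theta)$ for the purely radial terms), in $L^2$. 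Since $(i_1+j_1)+(i_2+j_2)=i+j\le k$, at least one of the two factors carries no more than $\lfloor k/2\rfloor\le k-2$ total derivatives (this is the one place $k\ge4$ is used); that factor will be placed in $L^\infty$.

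For a typical term I would put the low-order factor --- say the one built from $f$, carrying $m\le k-2$ derivatives --- in $L^\infty$, using Lemma~\ref{infinity_separated} to get $|D_\theta^{\,i_1}D_R^{\,j_1}f|_{L^\infty}\lesssim(\gamma-1)^{-1/2}\|f\|_{\mathcal{H}^{m+2}}\le(\gamma-1)^{-1/2}\|f\|_{\mathcal{H}^k}$ (the factor $(\gamma-1)^{-1/2}$ being the cost of the $\theta$-direction part of the embedding), and leave the high-order factor with the full weight $W$ (resp.\ $w/\sin^{\eta/2}(2\theta)$) in $L^2$, where it is dominated by $\|g\|_{\mathcal{H}^k}$. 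This works verbatim for all purely radial terms, and for every mixed term in which at least one $D_\theta$ survives on the high-order factor (then $|D_\theta^{\,i_2}D_R^{\,j_2}g\,W|_{L^2}$ is literally a summand of $\|g\|_{\mathcal{H}^k}^2$). Tracking the combinatorics, the only terms that do not fit this pattern are the two top-order ones, $\bigl(D_\theta f\bigr)\bigl(D_R^{\,k-1}g\bigr)$ and $\bigl(D_R^{\,k-1}f\bigr)\bigl(D_\theta g\bigr)$, occurring in the bound of $|D_\theta D_R^{\,k-1}(fg)\,W|_{L^2}$: there the low-order factor $D_R^{\,k-1}g$ (resp.\ $D_R^{\,k-1}f$) is purely radial, so after pulling it out in $L^2$ one is left with the weight $W=w\,w_\theta$ on a purely radial object, which is heavier than the $w/\sin^{\eta/2}(2\theta)$ available from \eqref{HsNorm} since $\gamma>\eta$. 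For these I would instead absorb the angular weight $w_\theta=\sin(2\theta)^{-\gamma/2}$ into the factor carrying the $D_\theta$: since $D_\theta f=\sin(2\theta)\,\partial_\theta f$ vanishes to first order in $\sin(2\theta)$ at $\theta=0,\pi/2$ and $\gamma<2$, the product $w_\theta D_\theta f=\sin(2\theta)^{1-\gamma/2}\partial_\theta f$ is bounded, with $|w_\theta D_\theta f|_{L^\infty}\lesssim(\gamma-1)^{-1/2}\|f\|_{\mathcal{H}^k}$ again by (a two-variable version of) Lemma~\ref{infinity_separated}; the remaining $L^2$ factor is then only $|D_R^{\,k-1}g\,w|_{L^2}\le\|g\|_{\mathcal{H}^k}$.

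Summing the $O_k(1)$ terms produced by Leibniz then gives $|fg|_{\mathcal{H}^k}\le C(\gamma-1)^{-1/2}|f|_{\mathcal{H}^k}|g|_{\mathcal{H}^k}$, and the same bounds applied term by term also show the membership $fg\in\mathcal{H}^k$. I expect the main (indeed essentially only) obstacle to be the weight-shuffling step for the two top-order terms: verifying that, after it has absorbed $w_\theta$, the $L^\infty$ factor is still controlled by $\|f\|_{\mathcal{H}^k}$ despite the mismatch between the exponents $\eta$ and $\gamma$ in the radial versus mixed parts of \eqref{HsNorm}. This is exactly what Lemma~\ref{infinity_separated} is built to supply, and carrying the $\theta$-integrability through it is what produces the single factor $(\gamma-1)^{-1/2}=(\alpha/10)^{-1/2}$ in the statement, in keeping with the $\alpha^{-1/2}$ loss advertised for these product rules.
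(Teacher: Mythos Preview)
Your overall strategy---Leibniz for $D_\theta,D_R$, place the factor with $\le k-2$ derivatives in $L^\infty$ via Lemma~\ref{infinity_separated}, leave the other in weighted $L^2$---is exactly the paper's, and you correctly isolate the one delicate case: the cross terms $(D_\theta f)(D_R^{\,k-1}g)$ and its mirror in $|D_\theta D_R^{\,k-1}(fg)\,W|_{L^2}$, where the angular weight $w_\theta$ is too heavy for the purely radial factor.

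The difference is in how that term is handled. You propose to absorb $w_\theta$ into the $D_\theta$ factor and bound $|w_\theta D_\theta f|_{L^\infty}=|\sin(2\theta)^{1-\gamma/2}\partial_\theta f|_{L^\infty}$ directly by Lemma~\ref{infinity_separated}. This step is shaky: applying the $\theta$-part of the lemma to $h=\sin(2\theta)^{1-\gamma/2}\partial_\theta f$ forces you to control $\int|\partial_\theta h|^2\sin(2\theta)^{2-\gamma}d\theta$, and the term where the derivative hits the prefactor produces $\int|\partial_\theta f|^2\sin(2\theta)^{2-2\gamma}d\theta$. Since $2-2\gamma<2-\gamma$, this weight is strictly heavier (by a factor $\sin(2\theta)^{-\gamma}$) than the $\sin(2\theta)^{2-\gamma}$ that the $\mathcal{H}^k$ norm supplies, and no uniform Hardy inequality rescues it.

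The paper avoids this by not putting either factor fully in $L^\infty$. Instead it writes the offending term (for $k=4$) as
\[
I=\int\!\!\int (D_z^3 f)^2(\partial_\theta g)^2\sin(2\theta)^{2-\gamma}\,w(z)^2\,dz\,d\theta
\le\Big(\int_0^\infty\sup_\theta|D_z^3 f|^2\,w^2\,dz\Big)\Big(\int_0^{\pi/2}\sup_z|\partial_\theta g|^2\sin(2\theta)^{2-\gamma}\,d\theta\Big),
\]
and applies the two halves of Lemma~\ref{infinity_separated} separately: the $\theta$-sup on $D_z^3 f$ costs one $\partial_\theta$ (landing on $|D_\theta D_z^3 f\,W|_{L^2}\le\|f\|_{\mathcal{H}^4}$), and the $z$-sup on $\partial_\theta g$ costs one $D_z$ (landing on $|D_\theta D_z g\,W|_{L^2}\le\|g\|_{\mathcal{H}^2}$). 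No weight is ever doubled. This mixed $L^\infty_\theta L^2_z\times L^2_\theta L^\infty_z$ splitting is the point you should replace your ``absorb $w_\theta$'' step with; everything else in your outline is fine.
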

Let us focus our discussion on the case $k=4$. Most of the terms can be controlled trivially using the embedding of $\mathcal{H}^k$ in $L^\infty$ (except one). The point is that if we take four derivatives of the product $fg$, one of the two must always have at most two derivatives on it and in that case we just put it in $L^\infty$ and pull it out of the integral. The only awkward term to handle is the following (due to the discrepancy in the angular weights):
\[I:=\int \int (D_z^3 f)^2 (\partial_\theta g)^2 \sin(2\theta)^{2-\gamma}\frac{(1+z)^4}{z^4}dzd\theta.\] To handle this term, we make use of Lemma \ref{infinity_separated}. Indeed,
\[I \leq \Big(\int_0^{\pi/2} \sup_z |\partial_\theta g|^2\sin(2\theta)^{2-\gamma} d\theta\Big)\Big(\int_0^\infty \sup_\theta |D_z^3 f|^2 \frac{(1+z)^4}{z^4}dz\Big).\] Thus, by Lemma \eqref{infinity_separated}, we get:
\[I\leq C |g|_{\mathcal{H}^4}^2 \cdot \frac{C}{\gamma-1} |f|_{\mathcal{H}^4}^2. \]

As in \cite{E_Classical}, we need a second product rule. 
\begin{proposition}\label{P2} [Second Product Rule]
Let $f\in \mathcal{H}^k$ and $g\in \mathcal{W}^{k,\infty}$. Then, $fg\in \mathcal{H}^k$ and\[|fg|_{\mathcal{H}^k}\leq \frac{C}{\sqrt{\gamma-1}}|f|_{\mathcal{H}^k}|g|_{\mathcal{W}^{k,\infty}}.\]
\end{proposition}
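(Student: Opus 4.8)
The plan is to imitate the proof of the First Product Rule (Proposition \ref{P1}), the two new features being that the second factor now lives in the $L^\infty$-type space $\mathcal{W}^{k,\infty}$ rather than in $\mathcal{H}^k$, and that the derivatives used to define $\mathcal{W}^{k,\infty}$ are not literally $D_R$ and $D_\theta$. I would first record a preliminary reduction: since $D_R=z\partial_z$ and $D_\theta=\sin(2\theta)\partial_\theta$ are obtained from $(z+1)\partial_z$ and $\frac{\sin(2\theta)}{\gamma-1+\sin(2\theta)}\partial_\theta$ by multiplication by functions bounded together with all of their own $D_R$- and $D_\theta$-derivatives (note $z\le z+1$ and $\gamma-1+\sin(2\theta)\le\gamma\le 2$), repeated use of the Leibniz rule lets one write $D_\theta^a D_R^b g$ as a finite combination of the defining derivatives of $\mathcal{W}^{k,\infty}$ against such bounded coefficients. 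Hence, for all $a+b\le k$,
\[
\bigl|D_\theta^a D_R^b g(z,\theta)\bigr|\le C\,|g|_{\mathcal{W}^{k,\infty}}\,\sin(2\theta)^{\alpha/5}\qquad\text{for all }(z,\theta).
\]
This pointwise bound is the only place the precise shape of the $\mathcal{W}^{k,\infty}$-norm enters.

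Next I would write out the Leibniz expansion. Since $D_\theta$ and $D_R$ are commuting derivations, for $i\ge1$ and $1\le i+j\le k$,
\[
D_\theta^i D_R^j(fg)=\sum_{a_1+a_2=i}\ \sum_{b_1+b_2=j}\binom{i}{a_1}\binom{j}{b_1}\bigl(D_\theta^{a_1}D_R^{b_1}f\bigr)\bigl(D_\theta^{a_2}D_R^{b_2}g\bigr),
\]
and similarly for the pure radial pieces $D_R^i(fg)$. For each term the strategy is to place the $g$-factor in $L^\infty$, which is always legitimate by the displayed bound, and the $f$-factor in an appropriate weighted $L^2$; concretely one splits $W=w\,w_\theta$ as $\bigl(w\,w_\theta\sin(2\theta)^{\alpha/5}\bigr)\cdot\sin(2\theta)^{-\alpha/5}$ (and $\tfrac{w}{\sin^{\eta/2}(2\theta)}$ analogously) and assigns $\sin(2\theta)^{-\alpha/5}$ to $g$. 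For every term in which the $f$-factor carries at least one $D_\theta$, and for every term feeding the pure radial part of $|fg|_{\mathcal{H}^k}$, the angular weight remaining on the $f$-factor, namely $\sin(2\theta)^{-\gamma/2+\alpha/5}$ (respectively $\sin(2\theta)^{-\eta/2+\alpha/5}$), is \emph{no more singular} than the weight already attached to that factor in $|f|_{\mathcal{H}^k}$, while the radial weight $w(z)$ sits entirely on the $f$-factor and matches the $\mathcal{H}^k$ radial weight; these terms are thus bounded by $C\,|f|_{\mathcal{H}^k}|g|_{\mathcal{W}^{k,\infty}}$ at once.

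The one delicate term is the one in which all angular derivatives (and possibly all derivatives) fall on $g$, i.e.\ $a_1=0$ with $i\ge1$: it feeds the $W$-weighted (angular) part of $|fg|_{\mathcal{H}^k}$, yet the $f$-factor $D_R^{b_1}f$ carries no angular derivative, so only the milder weight $\sin(2\theta)^{-\eta/2}$ is directly available from $|f|_{\mathcal{H}^k}$, whereas after extracting $g$ one needs $D_R^{b_1}f$ paired against $w(z)\sin(2\theta)^{-\gamma/2+\alpha/5}$, and $-\gamma/2+\alpha/5<-\eta/2$ once $\alpha$ is small. Exactly as in the proof of Proposition \ref{P1}, I would close this gap with Lemma \ref{infinity_separated}: since $i\ge1$ forces $b_1\le k-1$, the norm $\bigl|D_\theta D_R^{b_1}f\cdot W\bigr|_{L^2}$ is one of the terms of $|f|_{\mathcal{H}^k}$, and Lemma \ref{infinity_separated} lets one pass through $\sup_\theta|D_R^{b_1}f(z,\cdot)|$ and trade $D_R^{b_1}f$ for $D_\theta D_R^{b_1}f$ at the cost of the $(\gamma-1)$-dependent constant of that lemma (together with a convergent angular integral); carrying this out and then integrating in $z$ against $w(z)^2$ yields the bound $\tfrac{C}{\sqrt{\gamma-1}}\,|f|_{\mathcal{H}^k}|g|_{\mathcal{W}^{k,\infty}}$ for this term as well.

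The main obstacle is precisely the bookkeeping in this borderline term: one must check that near $\theta=0$ the three angular exponents — $\eta/2$ in the pure radial part of the $\mathcal{H}^k$-norm, $\gamma/2$ in its angular part, and $\alpha/5$ in the $\mathcal{W}^{k,\infty}$-norm — leave just enough room for Lemma \ref{infinity_separated} to absorb the deficit while keeping the loss at the claimed order in $(\gamma-1)^{-1}$ (this is the "$\sqrt{\alpha}$ loss" of the remark after Proposition \ref{proposition:lowerorderL}), and that the behaviour at $z=0$ — where $f\in\mathcal{H}^k$ must supply all the vanishing, since generic elements of $\mathcal{W}^{k,\infty}$ do not decay there — is handled by the matching radial weight. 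Everything else reduces, as in Proposition \ref{P1}, to the embedding $\mathcal{H}^k\hookrightarrow L^\infty$, the pointwise bound on the $\mathcal{W}^{k,\infty}$-derivatives above, and the boundedness of the conversion multipliers.
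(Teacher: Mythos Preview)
Your proposal is correct and follows essentially the same approach as the paper. The paper's own proof is a single sentence (``whenever derivatives fall onto $g$ we can take them out of the integral''), and your proposal fleshes this out: the Leibniz expansion, the pointwise control $|D_\theta^a D_R^b g|\le C|g|_{\mathcal{W}^{k,\infty}}\sin(2\theta)^{\alpha/5}$ via the conversion between the $\mathcal{W}^{k,\infty}$-derivatives and $D_\theta,D_R$, and the use of Lemma~\ref{infinity_separated} for the borderline term where all angular derivatives land on $g$ are exactly what the paper's reference to ``$I$ above'' and to \cite{E_Classical} is pointing at. One small caution on the bookkeeping you flag: after pulling $g$ out in the borderline term, the leftover angular integral $\int_0^{\pi/2}\sin(2\theta)^{2\alpha/5-\gamma}\,d\theta$ is convergent but is of order $(\gamma-1)^{-1}$, not $O(1)$; combined with the constant from Lemma~\ref{infinity_separated} this can give a slightly worse power of $(\gamma-1)$ than the stated $\tfrac{1}{\sqrt{\gamma-1}}$, so if you need the sharp constant you should track this more carefully (the paper is itself informal about this point).
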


\begin{proof}
As in \cite{E_Classical}, whenever derivatives fall onto $g$ we can take them out of the integral (this is easy to see in $I$ above, for example). 
\end{proof}

\begin{proposition}\label{T1} [First Transport Estimate]
Assume $k\geq 4$ and that $u,v, g\in \mathcal{H}^k.$ Then,
\[|(u D_\theta g, g)_{\mathcal{H}^k}|\leq \frac{C}{\sqrt{\gamma-1}}|u|_{\mathcal{H}^k}|g|_{\mathcal{H}^k}^2\] and 
\[|(vD_z g, g)_{\mathcal{H}^k}|\leq \frac{C}{\sqrt{\gamma-1}}|v|_{\mathcal{H}^k}|g|_{\mathcal{H}^k}^2\]
\end{proposition}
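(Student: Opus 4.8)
The plan is to run the proof of Proposition \ref{P1} almost verbatim, the one extra ingredient being a single integration by parts that absorbs the derivative ``lost'' inside $D_\theta g$ (respectively $D_z g$). It suffices to work with the explicit norm \eqref{HsNorm}, since the inner products built in Sections \ref{Coercivity} and \ref{SwirlParts} are equivalent to it with $\alpha$-independent constants. First I would expand $(uD_\theta g,g)_{\mathcal{H}^k}$ according to \eqref{HsNorm}, writing it as a finite sum of terms $\int D^{\alpha}(uD_\theta g)\,D^{\alpha}g\,\mathsf{w}^2$, where $D^{\alpha}=D_\theta^{i}D_R^{j}$ is one of the admissible derivative combinations ($i+j\le k$, with $i\ge 1$ in the second family) and $\mathsf{w}$ is the associated weight ($w\sin(2\theta)^{-\eta/2}$ or $W$). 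Leibniz splits each term: in every summand in which at least one derivative falls on $u$, one of the three factors carries at most $\lfloor k/2\rfloor\le k-2$ derivatives (here $k\ge 4$ is used), and I would pull that factor out in $L^\infty$ via Lemma \ref{infinity_separated}, exactly as in Proposition \ref{P1}; the single summand where the two families of angular weights clash (the analogue of the term $I$ there) is treated by the same $\sup_z/\sup_\theta$ splitting, using $\sin(2\theta)^{-\eta}\le\sin(2\theta)^{-\gamma}$ and $\partial_\theta=\sin(2\theta)^{-1}D_\theta$, and this is what produces the factor $(\gamma-1)^{-1/2}$.

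The genuinely new object is the top-order summand, in which all derivatives land on $D_\theta g$. Since $D_\theta$ commutes with $D_R$, it equals $\tfrac12\int u\,D_\theta\big((D^{\alpha}g)^2\big)\mathsf{w}^2$, and I would integrate by parts in $\theta$ with $D_\theta=\sin(2\theta)\partial_\theta$. The derivative then falls on $u$, on $\sin(2\theta)$, and on the weight, and the crucial bookkeeping is that $\partial_\theta(\sin(2\theta)W^2)=2(1-\gamma)\cos(2\theta)\,W^2$ and likewise $\partial_\theta\big(\sin(2\theta)\,w^2\sin(2\theta)^{-\eta}\big)=2(1-\eta)\cos(2\theta)\,w^2\sin(2\theta)^{-\eta}$, i.e.\ the weight-commutator carries a bounded coefficient of the good sign; this piece is $\lesssim |u|_{L^\infty}|g|_{\mathcal{H}^k}^2\lesssim (\gamma-1)^{-1/2}|u|_{\mathcal{H}^k}|g|_{\mathcal{H}^k}^2$. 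The remaining piece is $\tfrac12\int (D_\theta u)(D^{\alpha}g)^2\mathsf{w}^2$, for which I need $|D_\theta u|_{L^\infty}\lesssim(\gamma-1)^{-1/2}|u|_{\mathcal{H}^k}$; this comes from Lemma \ref{infinity_separated} applied to $h=D_\theta u$, combined with the identity $\partial_\theta D_\theta u=\sin(2\theta)^{-1}D_\theta^2 u$, which converts the embedding weight $\sin(2\theta)^{2-\gamma}$ into the weight $\sin(2\theta)^{-\gamma}$ already present in $W^2$ (again $k\ge 4$ gives room for the needed $D_z D_\theta^2 u$). The boundary contributions at $\theta=0,\pi/2$ vanish on the dense class of functions that vanish to high order there, which is the class in which all the estimates of the paper are first proved. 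The second inequality is word-for-word the same with $\theta$ replaced by $z$: note $D_z=z\partial_z=D_R$ because $z$ and $R$ differ by a spatially constant factor, integrate by parts in $z$ using that $\partial_z(zw^2)=\tfrac{z-3}{1+z}\,w^2$ is a bounded multiple of $w^2$, and bound $|D_z v|_{L^\infty}\lesssim(\gamma-1)^{-1/2}|v|_{\mathcal{H}^k}$ by Lemma \ref{infinity_separated}.

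The main obstacle is exactly this top-order term: one must verify that the integration by parts genuinely closes the estimate at level $\mathcal{H}^k$ (no residual top-order derivative on $g$ is left), that the commutator with $\mathsf{w}^2$ has a harmless coefficient — which is where the precise choices $w_\theta=\sin(2\theta)^{-\gamma/2}$ and $w=(1+z)^2/z^2$ are used — and that $D_\theta u$ (respectively $D_z v$) can be placed in $L^\infty$ with no worse than the $(\gamma-1)^{-1/2}$ loss allowed by the statement. Everything else is the same bookkeeping as in Proposition \ref{P1}.
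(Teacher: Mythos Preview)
Your proposal is correct and follows the same approach the paper has in mind: the paper's own proof is a two-line sketch (``essentially an application of the product rules'' plus the embedding $|D^j g|_{L^\infty}\le C(\gamma-1)^{-1/2}|g|_{\mathcal{H}^k}$ for $j\le k-2$), and you have simply written out what that sketch means, in particular making explicit the integration by parts that handles the single top-order commutator term. Your bookkeeping on the weight identities $\partial_\theta(\sin(2\theta)W^2)=2(1-\gamma)\cos(2\theta)W^2$ and $\partial_z(zw^2)=\tfrac{z-3}{1+z}w^2$, and on the embedding $|D_\theta u|_{L^\infty}\lesssim(\gamma-1)^{-1/2}|u|_{\mathcal{H}^k}$ via Lemma \ref{infinity_separated}, is exactly the content the paper is suppressing.
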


\begin{proof}
This is essentially an application of the product rules.  It is important to note that, since $k\geq 4$, we have $|D^j g|_{L^\infty}\leq \frac{C}{\sqrt{\gamma-1}}|g|_{\mathcal{H}^k},$ whenever $j\leq k-2$. 
\end{proof}

Similarly, we have the second transport estimate. 

\begin{proposition}\label{T2} [Second Transport Estimate]
Assume $k\geq 4$ and that $D_\theta u,D_zv\in \mathcal{W}^{k,\infty}$ and $g\in \mathcal{H}^k$ Then,
\[|(u D_\theta g, g)_{\mathcal{H}^k}|\leq \frac{C}{\sqrt{\gamma-1}}| u|_{\mathcal{W}^{k,\infty}}|g|_{\mathcal{H}^k}^2\] and 
\[|(vD_z g, g)_{\mathcal{H}^k}|\leq \frac{C}{\sqrt{\gamma-1}}| v|_{\mathcal{W}^{k,\infty}}|g|_{\mathcal{H}^k}^2.\]
\end{proposition}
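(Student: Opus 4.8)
The plan is to run the argument for the First Transport Estimate (Proposition~\ref{T1}) with the Second Product Rule (Proposition~\ref{P2}) in place of the first, using an integration by parts inside the energy to absorb the derivative that is apparently lost on $D_\theta g$ (resp.\ $D_z g$). For the first inequality I would expand $(uD_\theta g,g)_{\mathcal{H}^k}$ as the finite sum over the multi-indices $(i,j)$ appearing in \eqref{HsNorm} of the $L^2$-pairings $\big(\omega_\alpha\,D^\alpha(uD_\theta g),\,\omega_\alpha\,D^\alpha g\big)_{L^2}$, with $D^\alpha=D_\theta^iD_R^j$ and $\omega_\alpha$ the associated weight ($W$, or $w/\sin^{\eta/2}(2\theta)$ on the purely radial terms), and apply Leibniz, $D^\alpha(uD_\theta g)=\sum_{\beta\le\alpha}\binom{\alpha}{\beta}D^\beta u\,D^{\alpha-\beta}D_\theta g$, using that $D_R$ and $D_\theta$ commute. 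The terms then split into the top-order term $\beta=0$, i.e.\ $u\,D_\theta(D^\alpha g)$, and a remainder in which at least one $D_R$ or $D_\theta$ lands on $u$.

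For the top-order term I would integrate by parts in $\theta$, using $D_\theta=\sin(2\theta)\partial_\theta$:
\[\int \omega_\alpha^2\,u\,D_\theta\!\big((D^\alpha g)^2\big)\,d\theta=-\int (D^\alpha g)^2\,(D_\theta u)\,\omega_\alpha^2\,d\theta-\int (D^\alpha g)^2\,u\,\partial_\theta\!\big(\sin(2\theta)\,\omega_\alpha^2\big)\,d\theta,\]
where $\partial_\theta(\sin(2\theta)\omega_\alpha^2)=2(1-2p)\cos(2\theta)\,\omega_\alpha^2$ with $p\in\{\gamma/2,\eta/2\}$, so the coefficient of $u$ is a bounded (indeed $O(\alpha)$ or $O(1)$) function. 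Since $|D_\theta u|\le|D_\theta u|_{\mathcal{W}^{k,\infty}}$ pointwise (the weight $\sin(2\theta)^{-\alpha/5}$ in the $\mathcal{W}$-norm is $\ge1$), and $u\in L^\infty$ is likewise controlled by $|D_\theta u|_{\mathcal{W}^{k,\infty}}$ through the structure of $\mathcal{W}^{k,\infty}$ (cf.\ the Remark following \eqref{HsNorm}, using $\gamma-1=\tfrac{\alpha}{10}$), pulling these factors out in $L^\infty$ leaves $|D^\alpha g\,\omega_\alpha|_{L^2}^2\le|g|_{\mathcal{H}^k}^2$, so the top-order contribution is $\lesssim|D_\theta u|_{\mathcal{W}^{k,\infty}}|g|_{\mathcal{H}^k}^2$. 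The boundary terms at $\theta=0,\pi/2$ vanish: $\sin(2\theta)\,\omega_\alpha^2$ behaves like $\sin(2\theta)^{1-2p}$, which either tends to $0$ ($p=\eta/2$) or is killed by the $\sin(2\theta)^2$ factor that $D_\theta$ forces into $D^\alpha g$ whenever $i\ge1$ ($p=\gamma/2$); I would make this rigorous by first proving the estimate for smooth data and taking limits. For the remainder terms, at least one derivative falls on $u$, and since $D_R$, $D_\theta$ commute and $D_\theta u\in\mathcal{W}^{k,\infty}$, every factor of $u$ that appears is controlled in $\mathcal{W}^{k,\infty}$; these terms are then exactly of the form handled by Proposition~\ref{P2}, with $u$ as the $\mathcal{W}^{k,\infty}$ factor and $D^{\alpha-\beta}D_\theta g$ (carrying $\le k$ derivatives of $g\in\mathcal{H}^k$) the other, the only delicate contribution being the weight-mismatched term analogous to the term $I$ in the proof of Proposition~\ref{P1}, dispatched by Lemma~\ref{infinity_separated} and responsible for the $\tfrac1{\sqrt{\gamma-1}}$ factor.

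The second inequality is identical with $\theta$ replaced by $z$: the top-order term is integrated by parts in $z$,
\[\int w(z)^2\,v\,D_z\!\big((D^\alpha g)^2\big)\,dz=-\int (D^\alpha g)^2\,(D_z v)\,w^2\,dz-\int (D^\alpha g)^2\,v\,\partial_z\!\big(z\,w^2\big)\,dz,\]
with $\partial_z(zw^2)=\tfrac{z-3}{1+z}\,w^2$ a bounded multiple of $w^2$, and the hypothesis $D_zv\in\mathcal{W}^{k,\infty}$ enters exactly where $D_\theta u\in\mathcal{W}^{k,\infty}$ did; the remainder terms are again handled by Proposition~\ref{P2} and Lemma~\ref{infinity_separated}.

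I expect the main obstacle to be the remainder terms in which only a single derivative is spent on $u$ (resp.\ $v$), so that up to $k$ derivatives still act on $D_\theta g$ (resp.\ $D_z g$): one cannot then place $g$ in $L^\infty$, and instead must separate the supremum bound in the two variables as in Lemma~\ref{infinity_separated} and reconcile the two distinct angular weights $\sin(2\theta)^{-\gamma/2}$ and $\sin(2\theta)^{-\eta/2}$ occurring in \eqref{HsNorm} — precisely the mechanism that forces $k\ge4$ and produces the $\tfrac1{\sqrt{\gamma-1}}$ loss. A secondary point needing care is the rigorous justification, rather than formal discarding, of the integration-by-parts boundary contributions.
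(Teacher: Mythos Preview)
Your proposal is correct and follows the approach the paper intends: the paper gives no explicit proof of Proposition~\ref{T2} beyond the word ``Similarly'' (referring to Proposition~\ref{T1}, whose proof in turn is ``essentially an application of the product rules''), and what you have written is exactly the standard unpacking of that---Leibniz, integrate by parts on the top-order term, and invoke the Second Product Rule (Proposition~\ref{P2}) on the remainder. Your identification of the weight-mismatch term handled via Lemma~\ref{infinity_separated} as the source of the $\tfrac{1}{\sqrt{\gamma-1}}$ loss, and of the boundary terms as a point requiring care, is on target.
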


For the cut-off procedure we also make use of the following lemma.
\begin{lemma}\label{RadialMultiplier}
Let $\phi\in C^k([0,\infty))$ and assume that $\sup_{0\leq j\leq k}|D_z^j\phi|_{L^\infty}=C_{\phi}<\infty.$ Then, if $f\in\mathcal{H}^k$, we have that $\phi f\in\mathcal{H}^k$ and \[|f\phi|_{\mathcal{H}^k}\leq C_kC_\phi |f|_{\mathcal{H}^k}.\]
\end{lemma}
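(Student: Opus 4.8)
The plan is to expand all $D$-derivatives of the product $\phi f$ by the Leibniz rule and to absorb every derivative that lands on $\phi$ into $L^\infty$, using the hypothesis $\sup_{0\le j\le k}|D_z^j\phi|_{L^\infty}=C_\phi$. The structural facts that make this work are: the radial scaling operator $D_z=z\partial_z$ coincides with $D_R=R\partial_R$ (the change of variables between $z$, $R$, $y$ is a multiplication by a constant in the radial variable, which leaves the scaling vector field unchanged), so the hypothesis controls $|D_R^l\phi|_{L^\infty}$ for $0\le l\le k$; $D_z$ is a derivation, so $D_z^i(\phi f)=\sum_{l=0}^i\binom{i}{l}(D_z^l\phi)(D_z^{i-l}f)$; $\phi$ depends only on the radial variable, hence $D_\theta(\phi f)=\phi\, D_\theta f$; and $D_\theta$ commutes with $D_R$ since $\sin(2\theta)$ is independent of the radial variable and the radial variable is independent of $\theta$.

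First I would handle the radial part of the $\mathcal{H}^k$ norm, $\sum_{i=0}^k\big|D_R^i(\phi f)\,\tfrac{w}{\sin^{\eta/2}(2\theta)}\big|_{L^2}^2$. Expanding $D_R^i(\phi f)=\sum_{l=0}^i\binom{i}{l}(D_R^l\phi)(D_R^{i-l}f)$ and pulling the bounded factor $D_R^l\phi$ out of the $L^2$ integral gives $\big|(D_R^l\phi)(D_R^{i-l}f)\tfrac{w}{\sin^{\eta/2}(2\theta)}\big|_{L^2}\le C_\phi\big|(D_R^{i-l}f)\tfrac{w}{\sin^{\eta/2}(2\theta)}\big|_{L^2}\le C_\phi|f|_{\mathcal{H}^k}$, since $0\le i-l\le k$. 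Summing over $l$ and $i$ and folding the binomial coefficients into $C_k$ bounds this contribution by $C_kC_\phi|f|_{\mathcal{H}^k}$.

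Next I would handle the mixed part $\sum_{i\ge 1,\,1\le i+j\le k}\big|D_\theta^i D_R^j(\phi f)\,W\big|_{L^2}^2$. Using that $D_\theta$ annihilates functions of the radial variable and commutes with $D_R$, one obtains $D_\theta^i D_R^j(\phi f)=\sum_{l=0}^j\binom{j}{l}(D_R^l\phi)(D_\theta^i D_R^{j-l}f)$. Crucially, when $i\ge 1$ the multi-index $(i,j-l)$ still satisfies $i\ge 1$ and $1\le i+(j-l)\le i+j\le k$, so $(D_\theta^i D_R^{j-l}f)W$ is one of the genuine terms in the $\mathcal{H}^k$ norm of $f$. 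Pulling $D_R^l\phi$ out in $L^\infty$ again gives $\big|(D_R^l\phi)(D_\theta^i D_R^{j-l}f)W\big|_{L^2}\le C_\phi|f|_{\mathcal{H}^k}$, and summing yields the bound $C_kC_\phi|f|_{\mathcal{H}^k}$ for this contribution. Adding the two parts (and noting finiteness of the norm gives $\phi f\in\mathcal{H}^k$) proves $|\phi f|_{\mathcal{H}^k}\le C_kC_\phi|f|_{\mathcal{H}^k}$.

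There is no real obstacle here; the argument is pure bookkeeping. The only things to be careful about are: using the Leibniz rule for the logarithmic derivation $D_z=z\partial_z$ rather than for $\partial_z$; verifying that after redistributing derivatives the resulting multi-indices still index actual terms of the $\mathcal{H}^k$ norm, in particular that the constraint $i\ge 1$ in the mixed sum is preserved because $\phi$ carries no $\theta$-derivatives; and observing that multiplication by the bounded factor $D_R^l\phi$ does not interact with the radial weight $w$ or with the angular weights, so every term reduces cleanly to a term already present in $|f|_{\mathcal{H}^k}^2$.
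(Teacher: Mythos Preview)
Your argument is correct and is essentially the same as the paper's: expand with the Leibniz rule for the derivation $D_z$, use that $\phi$ is radial so $D_\theta$ never lands on it, and pull each $D_z^l\phi$ out in $L^\infty$ so that every remaining term is one that already appears in $|f|_{\mathcal{H}^k}$ with the correct weight. If anything, your write-up is more explicit than the paper's in checking that the angular weight is preserved (since all $D_\theta$'s stay on $f$), which is exactly why no $\frac{1}{\sqrt{\gamma-1}}$ factor appears.
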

\begin{remark}
The important point is that we do not get the $\frac{1}{\sqrt{\gamma-1}}$ loss in this estimate. 
\end{remark}
\begin{remark}
Observe that if $\chi$ is a radial smooth cut-off function, if we consider $\chi^M(z):=\chi(\frac{z}{M})$, then $|D_z^j (\chi^M)|_{L^\infty}\leq C_j,$ for all $j\leq k$ (the bound is independent of $M$). 
\end{remark}

\begin{proof}
Observe that if $D^j$ consists of $j$ derivatives (either $D_\theta$ or $D_z$), let us let $W_{D^j}$ denote either the weight $\sin(2\theta)^{-\eta}\frac{(1+z)^4}{z^4}$ or $\sin(2\theta)^{-\gamma}\frac{(1+z)^4}{z^4}$ depending on whether $D^j$ contains a $D_\theta$ or not.  Observe also that $D_\theta$ derivatives cannot hit $\phi$ since $\phi$ is radial. As a consequence, 
\[\int |D^j(f\phi)|^2 W_{D^j}\leq C_j C_\phi\sum_{|\beta|<j} |D^{\beta}f|^2 W_{D^\beta}\leq C_j C_\phi |f|_{\mathcal{H}^k}.\] Now summing over all $|j|\leq k$ we get the result. 
\end{proof}

\bibliographystyle{plain}
\bibliography{3dEuler_away_final}

\begin{thebibliography}{10}

\bibitem{BardosTitiReview}
Claude Bardos and Edriss~S. Titi.
\newblock Euler equations for incompressible ideal fluids.
\newblock {\em Uspekhi Mat. Nauk}, 62(3 (375)):5--46, 2007.

\bibitem{Chae2007}
Dongho Chae.
\newblock Nonexistence of self-similar singularities for the 3d incompressible
  {E}uler equations.
\newblock {\em Comm. Math. Phys.}, 273(1):203--215, 2007.

\bibitem{ChaeShv}
Dongho Chae and Roman Shvydkoy.
\newblock On formation of a locally self-similar collapse in the incompressible
  {E}uler equations.
\newblock {\em Arch. Ration. Mech. Anal.}, 209(3):999--1017, 2013.

\bibitem{ChenHou}
J.~Chen and T.Y. Hou.
\newblock Finite time blowup of 2d {B}oussinesq and 3d {E}uler equations with
  ${C}^{1,\alpha}$ velocity and boundary.
\newblock {\em ArXiv e-prints}, 2019.

\bibitem{CGM}
C.~Collot, T.E. Ghoul, and N.~Masmoudi.
\newblock Singularity formation for burgers equation with transverse viscosity.
\newblock {\em ArXiv e-prints}, 2018.

\bibitem{ConstantinReview}
P.~Constantin.
\newblock On the {E}uler equations of incompressible fluids.
\newblock {\em Bull. Amer. Math. Soc. (N.S.)}, 44(4):603--621, 2007.

\bibitem{EGM}
T.~M. Elgindi, T.~D. Ghoul, and N~Masmoudi.
\newblock Stable self-similar blowup for a family of nonlocal transport
  equations.
\newblock {\em ArXiv e-prints}, 2019.

\bibitem{EJB}
T.~M. {Elgindi} and I.-J. {Jeong}.
\newblock {Finite-time Singularity Formation for Strong Solutions to the
  Boussinesq System}.
\newblock {\em ArXiv e-prints}, August 2017.

\bibitem{E_Classical}
Tarek~M. Elgindi.
\newblock Finite-time singularity formation for ${C}^{1,\alpha}$ solutions to
  the 3d incompressible euler equations.
\newblock {\em ArXiv e-prints}, 2019.

\bibitem{EJE}
Tarek~M. Elgindi and In-Jee Jeong.
\newblock Finite-time singularity formation for strong solutions to the
  axi-symmetric $3d$ {E}uler equations.
\newblock {\em Ann. PDE}, 2019.

\bibitem{Elling}
Volker Elling.
\newblock Self-similar 2d {E}uler solutions with mixed-sign vorticity.
\newblock {\em Comm. Math. Phys.}, 348(1):27--68, 2016.

\bibitem{MR3779644}
Tej-Eddine Ghoul, Van~Tien Nguyen, and Hatem Zaag.
\newblock Blowup solutions for a reaction-diffusion system with exponential
  nonlinearities.
\newblock {\em J. Differential Equations}, 264(12):7523--7579, 2018.

\bibitem{MR3846237}
Tej-Eddine Ghoul, Van~Tien Nguyen, and Hatem Zaag.
\newblock Construction and stability of blowup solutions for a non-variational
  semilinear parabolic system.
\newblock {\em Ann. Inst. H. Poincar\'{e} Anal. Non Lin\'{e}aire},
  35(6):1577--1630, 2018.

\bibitem{Gibbon2008}
J.~D. Gibbon.
\newblock The three-dimensional {E}uler equations: where do we stand?
\newblock {\em Phys. D}, 237(14-17):1894--1904, 2008.

\bibitem{JiaSverakSS}
Hao Jia and Vladim\'ir \v{S}ver\'ak.
\newblock Local-in-space estimates near initial time for weak solutions of the
  {N}avier-{S}tokes equations and forward self-similar solutions.
\newblock {\em Invent. Math.}, 196(1):233--265, 2014.

\bibitem{MR2257393}
Carlos~E. Kenig and Frank Merle.
\newblock Global well-posedness, scattering and blow-up for the
  energy-critical, focusing, non-linear {S}chr\"{o}dinger equation in the
  radial case.
\newblock {\em Invent. Math.}, 166(3):645--675, 2006.

\bibitem{KiselevReview}
A~Kiselev.
\newblock Small scales and singularity formaiton in fluid dynamics.
\newblock {\em arXiv:1807.00184}.

\bibitem{HouLuo}
Guo Luo and Thomas~Y. Hou.
\newblock Toward the finite-time blowup of the 3{D} axisymmetric {E}uler
  equations: a numerical investigation.
\newblock {\em Multiscale Model. Simul.}, 12(4):1722--1776, 2014.

\bibitem{MB}
Andrew~J. Majda and Andrea~L. Bertozzi.
\newblock {\em Vorticity and incompressible flow}, volume~27 of {\em Cambridge
  Texts in Applied Mathematics}.
\newblock Cambridge University Press, Cambridge, 2002.

\bibitem{MR3179608}
Yvan Martel, Frank Merle, and Pierre Rapha\"{e}l.
\newblock Blow up for the critical generalized {K}orteweg--de {V}ries equation.
  {I}: {D}ynamics near the soliton.
\newblock {\em Acta Math.}, 212(1):59--140, 2014.

\bibitem{MR2150386}
Frank Merle and Pierre Raphael.
\newblock The blow-up dynamic and upper bound on the blow-up rate for critical
  nonlinear {S}chr\"{o}dinger equation.
\newblock {\em Ann. of Math. (2)}, 161(1):157--222, 2005.

\bibitem{MR1427848}
Frank Merle and Hatem Zaag.
\newblock Stability of the blow-up profile for equations of the type
  {$u_t=\Delta u+|u|^{p-1}u$}.
\newblock {\em Duke Math. J.}, 86(1):143--195, 1997.

\bibitem{MR3302641}
Frank Merle and Hatem Zaag.
\newblock On the stability of the notion of non-characteristic point and
  blow-up profile for semilinear wave equations.
\newblock {\em Comm. Math. Phys.}, 333(3):1529--1562, 2015.

\bibitem{NRS96}
J.~Necas, M.~Ruzicka, and V.~Sver\'ak.
\newblock On {L}eray's self-similar solutions of the {N}avier-{S}tokes
  equations.
\newblock {\em Acta Math.}, 176(2):283--294, 1996.

\bibitem{Tsai1998}
Tai-Peng Tsai.
\newblock On {L}eray's self-similar solutions of the {N}avier-{S}tokes
  equations satisfying local energy estimates.
\newblock {\em Arch. Rational Mech. Anal.}, 143(1):29--51, 1998.

\bibitem{VasseurVishik}
A.~Vasseur and M.~Vishik.
\newblock Blow-up solutions to 3{D} {E}uler are hydrodynamically unstable.
\newblock {\em ArXiv e-prints}, 2019.

\bibitem{VishikNon1}
M.~Vishik.
\newblock Instability and non-uniqueness in the {C}auchy problem for the
  {E}uler equations of an ideal incompressible fluid. {P}art {I}.
\newblock {\em ArXiv e-prints}, 2018.

\bibitem{VishikNon2}
M.~Vishik.
\newblock Instability and non-uniqueness in the {C}auchy problem for the
  {E}uler equations of an ideal incompressible fluid. {P}art {II}.
\newblock {\em ArXiv e-prints}, 2018.

\end{thebibliography}

\end{document}